\documentclass[11pt]{amsart}


\usepackage[dvips]{graphicx}
\usepackage{calc}
\usepackage{color}
\usepackage{amsmath}
\usepackage{amssymb}
\usepackage{amscd}
\usepackage{amsthm}
\usepackage{amsbsy}
\usepackage{delarray}
\usepackage{enumerate}
\usepackage[T1]{fontenc}
\usepackage{inputenc}
\usepackage{enumerate}
\usepackage{hyperref}

\usepackage{amscd}
\usepackage{amsthm}
\usepackage{array}
\usepackage{indentfirst}
\usepackage{geometry}
\usepackage{epsfig}

\usepackage{dsfont}
\usepackage{tikz}
\usetikzlibrary{matrix,arrows}
\usepackage{mathrsfs}
\usepackage[all]{xy}

\DeclareMathOperator{\cHom}{\mathscr{H}\mspace{-3mu}om}

\DeclareMathOperator{\supp}{supp}

\DeclareMathOperator{\osc}{osc}




\setlength{\parindent}{5mm}
\renewcommand{\leq}{\leqslant}
\renewcommand{\geq}{\geqslant}

\newcommand{\ko}{{\bf k}}

\newcommand{\eq}{\begin{eqnarray}}
\newcommand{\eneq}{\end{eqnarray}}
\newcommand{\eqn}{\begin{eqnarray*}}
\newcommand{\eneqn}{\end{eqnarray*}}

\newcommand{\isoto}[1][]{\xrightarrow[#1]%
{{\raisebox{-.6ex}[0ex][-.6ex]{$\mspace{1mu}\sim\mspace{2mu}$}}}}

\def\osc{\mathrm{osc}}

\def\epi{\mathrm{epi}}

\title{Homological differential calculus} 
\author{Nicolas Vichery}
\date{\today}

\newtheorem{thm}{Theorem}[section]
\newtheorem{lemma}[thm]{Lemma}
\newtheorem{prop}[thm]{Proposition}
\newtheorem{coroll}[thm]{Corollary}

\theoremstyle{definition}

\newtheorem{definition}[thm]{Definition}

\newtheorem{question}[thm]{Question}
\newtheorem{notation}[thm]{Notation}

\newtheorem{property}[thm]{Property}

\newtheorem*{acknow}{Acknowledgement}

\newlength{\espaceavantspecialthm}
\newlength{\espaceapresspecialthm}
\setlength{\espaceavantspecialthm}{\topsep} \setlength{\espaceapresspecialthm}{\topsep}

\newenvironment{exemple}[1][]{\refstepcounter{thm} 
\vskip \espaceavantspecialthm \noindent \textsc{Example~\thethm
#1.} }%
{\vskip \espaceapresspecialthm}

\newenvironment{rem}[1][]{\refstepcounter{thm} 
\vskip \espaceavantspecialthm \noindent \textsc{Remark~\thethm #1.} }%
{\vskip \espaceapresspecialthm}

\newcommand{\R}{{\mathbb{R}}}

\newcommand{\Z}{{\mathbb{Z}}}
\newcommand{\N}{{\mathbb{N}}}

\newcommand{\cD}{{\mathcal{D}}}

\newcommand{\cF}{{\mathcal{F}}}

\newcommand{\cS}{{\mathcal{S}}}



\newcommand{\fc}{{:\ }}

\begin{document}

\maketitle

\begin{abstract}              
This article provides a definition of a subdifferential for continuous functions based on homological considerations.  We show that it satisfies all the requirement for a good notion of subdifferential. Moreover, we prove sublinearity, a Leibniz formula and an approximation result.
This work fits in the framework of microlocal analysis of sheaves for $C^0$ symplectic problems and application to Aubry-Mather theory.
\end{abstract}

\section{Introduction}

The main part of this article is based on the field of homological algebra and more specifically on the microlocal analysis of sheaves. We provide a notion of subdifferential on the space of lower semi-continuous functions from a closed smooth manifold $X$ to $\R$.
There already exists a wide range of subdifferentials and we include this new one to the list checking some general condition a subdifferential should satisfy (Definition \ref{condition}). 
We prove sublinearity and Leibniz formula for the homological subdifferential. We will finish with an approximation statement of the subdifferential of a  $C^0$ convergent sequence of function.

The Proposition \ref{eqdef} gives to the reader not  familiar with the notion of microsupport an alternative definition of our subdifferential based only on singular homology when we deal with continuous functions.
That is the main reason why this theory will be developed only for continuous functions. We will also compare our sudifferential (named homological) with the approximate subdifferential and Clarke subdifferential in the Lipschitz case.

We believe that $C^0$ symplectic geometry and relatives could benefit from this new object. Indeed, all the concepts developped here fit very well in the framework for Hamiltonian non-displeacability that Tamarkin introduces in \cite{Tamarkin}. This work is highly based on the book of Kashiwara-Schapira \cite{KS} where they introduced the notion of microsupport of sheaves and systematically studied its properties. A study of the link of such a framework with classical objects of symplectic topology will appear in a subsequent paper of the author \cite{whatsheaves}.

The more striking result should come from the theory of Aubry-Mather for non-convex Hamiltonian system \cite{nonconvex}. If the Hamiltonian is Tonelli (convex in $p$), the Mather's $\alpha$ functional is convex \cite{Fathi}. But, in the non-convex case, following the definition of \cite{MVZ} the homogenized Hamiltonian does not have to be convex and even differentiable. Moreover, it can be obtained as a $C^0$ limit of a sequence of continuous functions constructed from symplectic capacities. A classical object of study in such a field is the subdifferential of the $\alpha$ function.  Even if all notions of subdifferential are the same in the convex case (see Definition \ref{condition}). In light of \ref{limit}, the homological subdifferential seems to be the right object to perform a generalization of the theory to non convex Hamiltonian. 

The main idea of homological subdifferential is based on a well known fact in Morse theory. If there is no critical point between two sublevel sets then the inclusion morphism is an isomorphism. This idea is similar to the one used to define the singular support of a sheaf. Indeed, this notion measures 'microlocaly' the change of cohomology and can be interpreted as a microlocal Morse deformation indicator.

\begin{acknow}
I would like to warmly thank Claude Viterbo for sharing his conception of critical point for continuous function that inspired section \ref{eqdef}. I am also grateful to Pierre Schapira for discussions and to Stéphane Guillermou for patiently explaining to me microlocal theory of sheaves and for giving to me precious remarks on a preliminary version. Finally, I would like to thank Vincent Humil\`ere for advice on a previous draft.

The research leading to these results has received funding
from the European Community's Seventh Framework Progamme ([FP7/2007-2013] [FP7/2007-2011]) under grant agreement $\text{n}\textsuperscript{o}$ [258204].
\end{acknow}

\section{Review on singular support of sheaves}

In this section, we recall some definitions and results from
\cite{KS}. We consider a smooth real manifold $M$.

\subsection{Geometrical part  and notations (\cite{KS})}

One denotes by   $\pi\fc T^*M\to M$ the
cotangent bundle of $M$. We equip this bundle with the symplectic form $\omega:=d\lambda$
with $\lambda$ the  canonical Liouville form.

The antipodal map $a_M$ is defined by:
\eq\label{eq:antipodal}
&&a_M\fc T^*M\to T^*M,(x;\xi)\mapsto(x;-\xi).
\eneq
If $A$ is a subset of $T^*M$, we denote by $A^a$ its image by the antipodal map.

 Let $L\subset M$ be a smooth submanifold. We
denote by $\nu^*_LM$ its conormal bundle  defined by the following exact sequence:

\[0\to \nu^*_LM\to L\times_MT^*M\to T^*L\to0 \ .\]

We identify $M$ to $\nu^*_MM$ and set $\dot{T^*M}:= T^*M\setminus M$.

Let $f\fc M\to N$ be  a smooth map. 

To $f$ are associated the cotangent morphisms

\eq\label{diag:cotgmor}
&&\xymatrix{
T^*M\ar[d]^-\pi&M\times_N\ar[d]^-\pi\ar[l]_-{f_d}\ar[r]^-{f_\pi}T^*N
                                      & T^*N\ar[d]^-\pi\\
M\ar@{=}[r]&M\ar[r]^-f&N.
}\eneq
One sets 
\eqn
&&\nu^*_MN:=\ker f_d= {f_d}^{-1}(\nu^*_MM). 
\eneqn

\begin{rem}\label{remgraph}
Let $\Gamma_f$ be the graph of $f$ in $M\times N$, 
the projection $T^*(M\times N)\to M\times T^*N$ identifies 
$\nu^*_{\Gamma_f}(M\times N)$ and $M\times_N T^*N$.
\end{rem}

\begin{definition}

Let $S_1,S_2\subset M$. Their Whitney normal cone,
denoted by $C(S_1,S_2)$, is the closed cone of $TM$ defined  in local coordinates by:

If $(x;v)$ is the associated
coordinate system on $TM$, then 
\eqn
&&\left\{
\parbox{60ex}{
$(x_0;v_0)\in C(S_1,S_2)\subset TM$ if and only
if there exists a sequence $\{(x_n,y_n,c_n)\}_n\subset S_1\times S_2\times\R^+$ such
that $x_n\stackrel{n}{\rightarrow}x_0$, $y_n\stackrel{n}{\rightarrow}x_0$  and $c_n(x_n-y_n)\stackrel{n}{\rightarrow}v_0$.
}
\right.
\eneqn

\end{definition}

\begin{definition}
Let $S\subset M$ and let $L\subset M$ a smooth submanifold, 
the Whitney normal cone of $S$ along $L$, denoted $C_L(S)$,
is the image in $T_LM$ of $C(L,S)$.

\end{definition}

\begin{definition}\label{normcone}
Let $S$ be a subset of $M$. We define the strict normal cone by

\[N_x(S):=T_x M\setminus C_x(M\setminus S, S)\]

\noindent and the conormal cone by the polar cone (denoted by $.^\circ$) to the strict normal cone at $x$:

\[N^*_x(S)=N_x(S)^\circ\]

\noindent and

\[N^*(S)=\bigcup_{x\in M} N^*_x(S)\ .\]
\end{definition}

We shall use the Hamiltonian isomorphism 
$H\fc T^*(T^*M)\isoto T(T^*M)$ inverse of the isomorphism induced by $\omega$.

\begin{definition}{\rm (see~\cite[Def. 6.5.1]{KS})}
\label{def:coisotropic}
A subset $S$ of $T^*M$ is coisotropic, 
at $p\in T^*M$ if for any
$\theta\in \nu^*_pT^*M$ such that the  Whitney normal cone $C_p(S,S)$ is
contained in the hyperplane $\{v\in TT^*M;\langle v,\theta\rangle=0\}$, 
one has $-H(\theta)\in C_p(S)$.  A set $S$ is coisotropic if it is so
at each $p\in S$. 
\end{definition}

When $S$ is a smooth submanifold, one recovers the usual notion of coisotropic submanifold. This means that at each point of $S$, the symplectic orthogonal of the tangent space is included into the tangent space.

\begin{definition}
Let $A\subset T^*M$. The subset $A$ is said to be $\R^+-$conic if $A$ is invariant by positve dilatation in the fiber of $T^*M$.
\end{definition}

\begin{definition}
Let $f\fc M\to N$ be a smooth map and let 
$\Lambda\subset T^*N$ be a closed $\R^+$-conic subset. One says that 
$f$ is non-characteristic for $\Lambda$ ( or else, $\Lambda$ 
is non-characteristic for $f$, or $f$ and $\Lambda$ are transversal )\, if
\eqn
&&{f_\pi}^{-1}(\Lambda)\cap \nu^*_MN\subset M\times_N\nu^*_NN.
\eneqn
\end{definition}

A morphism $f\fc M\to N$ is non-characteristic for a closed
$\R^+$-conic subset $\Lambda$ of $T^*N$ if and
only if $f_d\fc M\times_NT^*N\to T^*M$ is proper on ${f_\pi}^ {-1}(\Lambda)$
and in this case 
$f_d{f_\pi}^{-1}(\Lambda)$ is closed and $\R^+$-conic in $T^*M$.

\subsubsection{Conification}

Since our theory is highly linked with symplectic and contact geometry, it is classical to consider the jet space of $X$.

\begin{definition}
The jet space $J^1(X)$ is the contact manifold given by:
 \[J^1(X):=T^*X \times \R\]
\noindent with the contact one form $\tilde\lambda=\lambda+dt$.
\end{definition}
 
\begin{rem}
We intentionally take this unusual sign convention in order to keep notation as light as possible.
\end{rem} 

The manifold $T^*X\times T^*\R$ equipped with the $1$-form $\tau dt+\lambda$ and whose associated symplectic form is  $\omega_{T^*X}+d\tau\wedge dt$ is the symplectified of $J^1(X)$. We thus define:

\[\xymatrix{\{\tau>0\}\cap T^*(X\times\R) \ar[rd]^{\tilde \rho} \ar[rr]^\rho && T^*X \\ & J^1(X) \ar[ru]^r}\]

\noindent with
\[\rho(x,t,p,\tau)=(x,\frac p \tau )\ ,\]
\[\tilde \rho(x,t,p,\tau)=(x,\frac p \tau ,t)\ ,\]
\noindent and $r$ the canonical projection.
\begin{definition}

Let $L \subset T^*X$ a smooth Lagrangian, we associate a smooth manifold of dimension $n+2$ in $T^*X\times T^*\R$:

\[Cone(L)=\rho^{-1}(L)\ .\]

\end{definition}

\begin{rem}
In semi-classical analysis, this conification corresponds to the introduction of a parameter $\hbar$ which has the nature of an "action".
\end{rem}

\begin{notation}
For all conic subsets $A\subset T^*X\times T^*\R$, we denote \[Red(A)=\rho(A\cap \{\tau>0\})\ .\]
\end{notation}

\begin{rem}
The map $Red$ is the symplectic reduction along the coisotropic submanifold $\{(x,p,t,\tau)\in T^* X\times T^*\R,\tau=1\}$.
\end{rem}

\begin{property}
 $Red(Cone(L))=L\subset T^*X$.
\end{property}

\subsection{Microsupport}

We consider a commutative unital ring $\ko$ of finite global dimension 
({\em e.g.} $\ko=\Z/2\Z$).
We denote by $D(\ko_M)$ the derived category of sheaves of $\ko$-modules on $M$.

Recall the definition of the microsupport (or singular support) $SS(F)$ of a sheaf $F$.

\begin{definition}{\rm (see~\cite[Def.~5.1.2]{KS})}
Let $F\in D^b(\ko_M)$ and let $p\in T^*M$. 
One says that $p\notin SS(F)$ if there exists an open neighborhood
$U$ of $p$ such that for any $x_0\in M$ and any
real $C^1$-function $\phi$ on $M$ defined in a neighborhood of $x_0$ 
satisfying $d\phi(x_0)\in U$ and $\phi(x_0)=0$, one has
$(R\Gamma_{\{x;\phi(x)\geq0\}} (F))_{x_0}\simeq0$.
\end{definition}

The following properties are well known:

\begin{itemize}
\item
The microsupport is closed and invariant by the action of  $\R^+$ on $T^*M$. 
\item
$SS(F)\cap \nu^*_MM =supp(F)$.
\item
Suppose that $F_1\to F_2\to F_3\stackrel{[+1]}{\to}$ is a
distinguished triangle in  $D^b(\ko_M)$, then 
$SS(F_i)\subset SS(F_j)\cup SS(F_k)$ for all $i,j,k\in\{1,2,3\}$
with $j\not=k$. 
\end{itemize}

\begin{thm}{\rm (see~\cite[Th.~6.5.4]{KS})}
Let $F\in D^b(\ko_M)$. Then its microsupport 
$SS(F)$ is coisotropic.
\end{thm}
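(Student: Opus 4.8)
The plan is to argue by contradiction, reducing to a local statement and then using a microlocal deformation (propagation) argument to contradict membership in the microsupport. Since both the coisotropy condition of Definition \ref{def:coisotropic} and $SS(F)$ are local and $\R^+$-conic, it suffices to fix a point $p=(x_0;\xi_0)\in SS(F)\cap\dot{T^*M}$ and work in local coordinates, identifying a neighborhood of $p$ in $T^*M$ with an open subset of $\R^n\times\R^n$. Write $S:=SS(F)$ and suppose that $S$ fails to be coisotropic at $p$. Unwinding the definition, this means there is a covector $\theta\in\nu^*_pT^*M$ such that the Whitney normal cone $C_p(S,S)$ lies in the hyperplane $\{v\in T_pT^*M;\ \langle v,\theta\rangle=0\}$, yet $-H(\theta)\notin C_p(S)$.

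The key is to convert this geometric failure of involutivity into the existence of a good test function. First I would exploit the two conditions on $\theta$ separately: the inclusion $C_p(S,S)\subset\theta^\perp$ controls $S$ to second order in the direction $\theta$, while $-H(\theta)\notin C_p(S)$ — the latter being a closed cone — provides, by a separation argument, an open cone of directions transverse to $S$ near $p$. Combining these, I would construct a real $C^2$ function $\phi$ defined near $x_0$ with $\phi(x_0)=0$ and $d\phi(x_0)=\xi_0$, whose $1$-jet $x\mapsto(x;d\phi(x))$ meets $S$ only at $p$ in a neighborhood, and whose Hessian is chosen — using precisely the room afforded by $C_p(S,S)\subset\theta^\perp$ — so that the moving family of sublevel sets $\{\phi\leq s\}$ sweeps across $x_0$ non-characteristically for $F$. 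Heuristically, $H(\theta)$ records the infinitesimal motion of the graph of $d\phi$ under such a deformation (it is the characteristic direction attached to the hypersurface cut out by $\theta$), which is exactly why the hypothesis $-H(\theta)\notin C_p(S)$ is the obstruction that permits the deformation to avoid $S$.

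With such a $\phi$ in hand, I would invoke the non-characteristic deformation lemma for microsupports (the propagation results of \cite{KS}): since $d\phi$ avoids $S=SS(F)$ along the deformation, the restriction morphisms relating the sections of $F$ supported in the various $\{\phi\geq s\}$ are isomorphisms, and degenerating the family yields $(R\Gamma_{\{x;\phi(x)\geq0\}}(F))_{x_0}\simeq0$. But by the very definition of the microsupport this says $p=(x_0;d\phi(x_0))\notin SS(F)$, contradicting $p\in S$. Hence no such $\theta$ exists, and $S=SS(F)$ is coisotropic at every point, which is the assertion.

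I expect the main obstacle to be the construction of the test function $\phi$ with the correct second-order behavior: producing a single $C^2$ function whose graph stays off $S$ not merely at $p$ but along the entire one-parameter deformation requires a quantitative relation between the Whitney cone geometry $C_p(S,S)\subset\theta^\perp$ and the set of admissible Hessians. The genuinely hard analytic input, however, is the propagation/deformation lemma itself — that $d\phi\cap SS(F)=\varnothing$ forces the sublevel-set cohomology to be locally constant. That is where the microlocal content lives; it is established in \cite{KS} through a careful study of $R\Gamma$ with supports in moving families together with the stability of $SS(F)$ under the standard operations, and everything above is essentially a geometric repackaging of that result.
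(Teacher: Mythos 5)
First, a remark on scope: the paper does not prove this statement at all --- it is quoted as background from \cite[Th.~6.5.4]{KS} --- so the only proof to measure your attempt against is the one in Kashiwara--Schapira. Measured against it, your sketch has a genuine logical gap at its final step. With the definition of microsupport used here, $p\in SS(F)$ means that for \emph{every} neighborhood $U$ of $p$ in $T^*M$ there exist some $x_0$ and some $C^1$ test function $\phi$ with $d\phi(x_0)\in U$, $\phi(x_0)=0$ and $(R\Gamma_{\{\phi\geq 0\}}F)_{x_0}\not\simeq 0$. Exhibiting a single well-chosen $\phi$ whose germ vanishes therefore does not contradict $p\in SS(F)$: to do that you would need to kill all test functions whose differential lies in some fixed neighborhood of $p$, and nothing in your construction addresses any $\phi$ other than the one you build.

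Second, even that one $\phi$ is not obtainable from the hypotheses. The data you start from --- $C_p(S,S)\subset\theta^{\perp}$ and $-H(\theta)\notin C_p(S)$ --- are purely infinitesimal at $p$ (Whitney cones only record first-order limiting secant directions), whereas you need an actual Lagrangian graph $\{(x;d\phi(x))\}$ together with its one-parameter deformation, i.e.\ an $(n+1)$-dimensional family inside the $2n$-dimensional $T^*M$, to avoid the closed set $S$ near $p$ away from $p$. A dimension count already rules this out when $S$ is, say, a non-coisotropic hypersurface; the theorem is precisely about situations in which such intersections cannot be avoided, so an argument premised on avoiding them is circular. The proof in \cite{KS} is of a different nature: one forms $\mu hom(F,F)$ on $T^*M$, uses that its support coincides with $SS(F)$ while its microsupport is bounded by the normal cone $C(SS(F),SS(F))$ transported by the Hamiltonian isomorphism, and then invokes a general propagation lemma: at a point $q$ of the support of a sheaf $K$, any vector $v$ with $-v\notin C_q(\supp K)$ must pair nontrivially with $SS_q(K)$. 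Applied to $K=\mu hom(F,F)$ and $v=H(\theta)$, this yields exactly $C_p(S,S)\not\subset\theta^{\perp}$ whenever $-H(\theta)\notin C_p(S)$. Your heuristic that $H(\theta)$ is the characteristic direction along which singularities propagate is the right intuition, but the non-characteristic deformation lemma alone does not convert it into a proof.
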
 

\begin{definition}
Let $Z$ a locally closed subset of $M$. The constant sheaf on $Z$, denoted by $\ko_Z$, is the sheaf on $M$ with the following section group :

\[\ko_Z(U):=\left\{ f\fc U\cap Z \rightarrow k,\ \text{locally constant and}\ supp(f)\  \text{closed in}\ U  \right\}\]
\end{definition}

\begin{exemple}\label{fundex}

\begin{enumerate}[a)]
 \item  If $F$ is a non-zero local system on a connected manifold $M$,
then $SS(F)$ is the zero-section.

\item If $Z$ is a smooth closed submanifold of $M$ and $F=\ko_Z$, then 
$SS(F)=\nu^*_Z M$.

\item Let $\phi$ be a $C^1$-function with $d\phi(x)\not=0$ when $\phi(x)=0$.
Let $Z=\{x\in M;\phi(x)\geq0\}$. 
Then 

\[SS(\ko_Z)=Z\times_M\nu^*_MM\cup\{(x;\lambda d\phi(x));\phi(x)=0,\lambda\geq0\}\ .\]

\end{enumerate}

\end{exemple}

We recall a fundamental bound on the microsupport of sheaves constant on a closed subset.

\begin{prop}\cite[Prop.-5.3.8]{KS}\label{bc}
 Let M be a manifold and Z a closed subset. Then:
 
 \[SS(\ko_Z)\subset N^*(Z) \ .\]

\end{prop}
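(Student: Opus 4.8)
The plan is to prove the contrapositive: fix $p_0=(x_0;\xi_0)\in T^*M$ with $p_0\notin N^*(Z)$ and show $p_0\notin SS(\ko_Z)$. Unwinding Definition \ref{normcone} together with the ($\geq 0$) convention for the polar cone, the condition $\xi_0\notin N_{x_0}(Z)^\circ$ means there is a vector $v_0\in N_{x_0}(Z)$, that is $v_0\notin C_{x_0}(M\setminus Z,Z)$, with $\langle\xi_0,v_0\rangle<0$. The whole point is that this single $v_0$ is simultaneously a direction in which the local topology of $Z$ cannot jump (because $Z$ is stable under a small push along $v_0$, by non-membership in the Whitney cone) and a direction along which every admissible test function $\phi$ strictly decreases (by the sign of the pairing).

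First I would promote the pointwise condition $v_0\notin C_{x_0}(M\setminus Z,Z)$ to a uniform local one by a compactness argument read directly off the definition of the Whitney normal cone: there exist an open convex cone $\gamma\ni v_0$, an open neighborhood $W$ of $x_0$ and $\eps>0$ such that $y+w\in Z$ whenever $y\in Z\cap W$, $w\in\gamma$, $|w|<\eps$ and $y+w\in W$. Indeed, if this failed for every choice of $(\gamma,W,\eps)$ one could extract sequences $y_n\in Z$ and $x_n=y_n+w_n\notin Z$ with $y_n,x_n\to x_0$ and $w_n/|w_n|\to v_0/|v_0|$, and rescaling by $c_n=|v_0|/|w_n|$ would place $v_0$ in $C_{x_0}(M\setminus Z,Z)$, a contradiction. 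After shrinking $\gamma$ I may also pick a neighborhood $U$ of $p_0$ in $T^*M$ so that $\langle\xi_1,w\rangle<0$ for all $(x_1;\xi_1)\in U$ and all $w\in\gamma\setminus\{0\}$; this is possible because $\langle\xi_0,v_0\rangle<0$ is an open condition.

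With this data I would verify the vanishing in the definition of the microsupport for the neighborhood $U$. Let $x_1\in W$ and let $\phi$ be $C^1$ near $x_1$ with $\phi(x_1)=0$ and $d\phi(x_1)\in U$. If $x_1\notin Z$ then, $Z$ being closed, $\ko_Z$ vanishes near $x_1$ and $(R\Gamma_{\{\phi\geq0\}}\ko_Z)_{x_1}\simeq 0$ trivially. If $x_1\in Z$, I would use the excision triangle $(R\Gamma_{\{\phi\geq0\}}\ko_Z)_{x_1}\to(\ko_Z)_{x_1}\to(Rj_*j^{-1}\ko_Z)_{x_1}\stackrel{[+1]}{\to}$, where $j$ is the inclusion of $\{\phi<0\}$, and reduce to showing that the restriction $(\ko_Z)_{x_1}\to(Rj_*j^{-1}\ko_Z)_{x_1}$ is an isomorphism. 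Since $(Rj_*j^{-1}\ko_Z)_{x_1}=\varinjlim_{V\ni x_1}R\Gamma(V\cap\{\phi<0\}\cap Z;\ko)$, it suffices to prove that for $V$ a small ball the flow $\Phi_s(y)=y+sv_0$ exhibits $V\cap\{\phi<0\}\cap Z$ as a deformation retract of $V\cap Z$ as $V$ shrinks to $\{x_1\}$: by the invariance property of Paragraph 2 the flow keeps $Z\cap W$ inside $Z$, while by continuity of $d\phi$ and the sign condition one has $\tfrac{d}{ds}\phi(\Phi_s(y))=\langle d\phi(\Phi_s(y)),v_0\rangle<0$ on a small ball, so the flow pushes every point of $Z$ near $x_1$ into $\{\phi<0\}$ after a uniform time. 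Both stalks are then $\ko$ in degree $0$ and the arrow is an isomorphism, whence the stalk vanishes and $p_0\notin SS(\ko_Z)$.

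The main obstacle is the last step: converting the flow $\Phi_s$ into an honest statement about the derived stalk. The clean route is to invoke the non-characteristic deformation lemma of \cite{KS} (or to build the explicit homotopy and use that $R\Gamma$ commutes with the relevant filtered colimit), verifying that the family $\{\phi<0\}$ sweeps across $Z$ without meeting the boundary behaviour controlled by $\gamma$. The second delicate point is the uniformity claimed in Paragraph 2, namely that one triple $(\gamma,W,U)$ works for all base points $x_1\in W$ and all admissible $\phi$ simultaneously; this is precisely what the compactness extraction from the Whitney cone delivers.
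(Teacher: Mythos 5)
The paper offers no proof of this proposition: it is quoted with a citation to \cite{KS} (Prop.~5.3.8), so there is no in-paper argument to compare yours against. Measured against the proof in \cite{KS}, your route is a direct verification of the definition of the microsupport, whereas \cite{KS} packages the identical geometric input into the $\gamma$-topology and the microlocal cut-off lemma: one picks a proper closed convex cone $\gamma$ with $v_0$ in its interior such that $Z$ is locally $\gamma$-invariant, notes that $\ko_Z$ is then locally pulled back from the $\gamma$-topology, and concludes $SS(\ko_Z)\subset M\times\gamma^{\circ a}$ locally, which misses $\xi_0$ since $\langle\xi_0,v_0\rangle<0$. Your two preparatory steps are exactly the inputs to that argument and are correct: the contradiction/extraction turning $v_0\notin C_{x_0}(M\setminus Z,Z)$ into uniform stability of $Z\cap W$ under translation by a thin cone $\gamma$ around $v_0$ is the standard characterization of the strict normal cone, and the sign condition extends to a conic neighborhood $U$ of $p_0$ because the closure of a suitably shrunk $\gamma$ meets the unit sphere in a compact set on which $\langle\xi_0,\cdot\rangle$ is strictly negative; the sign conventions also match the paper's (the polar with $\geq 0$, as one checks on Example \ref{fundex}~c)). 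The one place where you have a sketch rather than a proof is the final stalk computation: a literal deformation retraction of $V\cap Z$ onto $V\cap\{\phi<0\}\cap Z$ is not quite available (the flow $\Phi_s$ pushes points out of $V$), and what one actually establishes is that the two filtered systems $\{V\cap Z\}_V$ and $\{V\cap\{\phi<0\}\cap Z\}_V$ are cofinally intertwined by the maps $\Phi_s$, which preserve $Z$ and strictly decrease $\phi$ at a definite rate near $x_1$, so that the induced maps on the colimits of cohomology are mutually inverse; equivalently one invokes the non-characteristic deformation lemma of \cite{KS} for the sweeping family $\{\phi<s\}$. You flag this step yourself and point at the right tool, so I would count your argument as correct modulo standard technique rather than as having a genuine gap; both approaches ultimately rest on the same two facts, local $\gamma$-invariance of $Z$ and negativity of $\langle\xi,v\rangle$ on $U\times\gamma$.
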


We now consider the case where $M=X\times \R$ with $X$ a smooth manifold.

\begin{definition}
Let $F\in D^b(k_{X\times\R})$. The representative of $F$ is the set \[R(F):=Red(SS(F))\subset T^*X \ .\]
\end{definition}

\subsubsection{Functorial operations (proper and non-characteristic cases)}

Let $M$ and $N$ be two real manifolds. We denote by $q_i$ ($i=1,2$)
the $i$-th projection defined on $M\times N$ and by $p_i$ ($i=1,2$)
the $i$-th projection defined on $T^*(M\times N)\simeq T^*M\times T^*N$.

\begin{thm}\label{th:opboim}{(See \cite[\textsection 5.4]{KS}.)}
Let $f\fc M\to N$ be a morphism of manifolds,
let $F\in D^b(\ko_M)$ and let $G\in D^b(\ko_N)$.  
\begin{enumerate}
\item
One has
\eqn
&& SS(F\boxtimes G)\subset SS(F)\times SS(G),\\
&& SS(R\cHom({q_1}^{-1} F, {q_2}^{-1}G))\subset SS(F)^a\times SS(G).
\eneqn 
\item
Assume that $f$ is proper on $\supp(F)$. Then
$SS(Rf_*F)\subset f_\pi{f_d}^{-1}SS(F)$.
\item
Assume that $f$ is non-characteristic with
respect to $SS(G)$. Then 
the natural morphism ${f}^{-1} G\otimes \omega_{M/N}\to {f}^{!}(G)$
is an isomorphism. Moreover
$SS({f}^{-1}G) \cup SS({f}^{!}G) \subset f_d {f_\pi}^{-1} SS(G)$.

\end{enumerate}
\end{thm}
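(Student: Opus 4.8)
The plan is to establish the three items in the order (1), (2), (3), because the external product bound is almost immediate from the definition of microsupport, the proper direct image estimate is the genuinely deep statement, and the inverse image estimates can then be deduced from the direct image case by factoring $f$ through its graph. Throughout, the only real tool is the defining vanishing condition $(R\Gamma_{\{\phi\geq 0\}}F)_{x_0}\simeq 0$, together with the non-characteristic deformation (microlocal Morse) lemma of \cite{KS}.

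First I would treat the external product. Let $(p,q)\in T^*M\times T^*N\simeq T^*(M\times N)$ with, say, $p\notin SS(F)$, and fix a $C^1$ test function $\Phi$ near $(x_0,y_0)$ with $d\Phi(x_0,y_0)=(p,q)$. Writing $\Phi$ in the two sets of variables and freezing $y=y_0$, the restriction $\Phi(\cdot,y_0)$ is a test function adapted to $p$, so the defining condition for $F$ gives vanishing in the $M$-direction; a Künneth argument for $\boxtimes$ then propagates this vanishing to $R\Gamma_{\{\Phi\geq 0\}}(F\boxtimes G)$ at $(x_0,y_0)$, yielding $SS(F\boxtimes G)\subset SS(F)\times SS(G)$. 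The bound for $R\cHom({q_1}^{-1}F,{q_2}^{-1}G)$ follows the same pattern, the only new point being that the first variable enters contravariantly through $\cHom$, which flips the sign of the covector and accounts for the antipodal $SS(F)^a$ in the statement.

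The core of the theorem is item (2), and this is where I expect the main obstacle. Take $p_0=(y_0;\eta_0)\notin f_\pi{f_d}^{-1}SS(F)$; concretely this says that for every $x\in f^{-1}(y_0)$ one has $(x;{}^t\!df_x(\eta_0))\notin SS(F)$. For a test function $\psi$ near $y_0$ with $d\psi(y_0)=\eta_0$, properness of $f$ on $\supp(F)$ lets me apply proper base change to identify $(R\Gamma_{\{\psi\geq 0\}}Rf_*F)_{y_0}$ with the cohomology over the compact fiber $f^{-1}(y_0)\cap\supp(F)$ of $R\Gamma_{\{\psi\circ f\geq 0\}}F$, where $\psi\circ f$ is a test function whose differential at each $x$ is exactly ${}^t\!df_x(\eta_0)$. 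The pointwise hypothesis makes each stalk of $R\Gamma_{\{\psi\circ f\geq 0\}}F$ along the fiber vanish; the difficulty is to pass from this \emph{pointwise} vanishing to vanishing of the \emph{global} section complex over the fiber with a single $\psi$. This is precisely what the non-characteristic deformation lemma delivers: covering the compact fiber by finitely many neighborhoods on which the microsupport condition holds and running the deformation along the sublevels $\{\psi\circ f \geq s\}$ shows the restriction to the fiber is acyclic, hence $p_0\notin SS(Rf_*F)$. Controlling the interaction of the deformation with the fibers uniformly is the delicate point and the main obstacle.

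Finally, for item (3) I would reduce to (2) via the graph factorization $f\fc M\xrightarrow{\gamma_f}M\times N\xrightarrow{q_2}N$, with $\gamma_f$ a closed embedding and $q_2$ a projection. The projection is handled by the product estimates of item (1) applied to ${q_2}^{-1}G=\ko_M\boxtimes G$, while the closed embedding $\gamma_f$ is treated directly, using Remark \ref{remgraph} to identify $\nu^*_{\Gamma_f}(M\times N)$ with $M\times_NT^*N$ so that the non-characteristic hypothesis on $f$ becomes transversality of $\gamma_f$ with the relevant conic set. The non-characteristic condition is exactly what guarantees that ${f_d}$ is proper on ${f_\pi}^{-1}(SS(G))$, so that ${f_d}{f_\pi}^{-1}SS(G)$ is closed and conic, and it is also the hypothesis under which the canonical morphism ${f}^{-1}G\otimes\omega_{M/N}\to{f}^{!}G$ is an isomorphism; I would establish the latter by checking it on stalks after the same deformation argument, which forces both sides to agree once no covector of $SS(G)$ is pulled back into $\nu^*_MN$ off the zero section. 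The two microsupport inclusions in (3) then follow simultaneously.
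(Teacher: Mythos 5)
The paper does not actually prove this theorem: it is recalled verbatim from \cite[\textsection 5.4]{KS} (Propositions 5.4.1, 5.4.4 and 5.4.13 there), so there is no in-paper argument to compare yours against. Judged on its own terms, your outline of item (2) is the standard one and is essentially correct: properness on $\supp(F)$ plus proper base change reduce the stalk of $R\Gamma_{\{\psi\geq 0\}}Rf_*F$ at $y_0$ to the section complex over the compact set $f^{-1}(y_0)\cap\supp(F)$, and the non-characteristic deformation lemma is indeed the tool that upgrades stalkwise vanishing along the fiber (which holds uniformly because the defining condition of $SS$ gives vanishing for all test functions with differential in a whole neighborhood) to vanishing of the global section complex. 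You correctly identify the uniformity over the fiber as the delicate point.

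The genuine gaps are in (1) and in the second half of (3). For (1), the step ``a K\"unneth argument for $\boxtimes$ then propagates this vanishing to $R\Gamma_{\{\Phi\geq 0\}}(F\boxtimes G)$'' does not work as stated: the closed set $\{\Phi\geq 0\}$ is not of the form $Z_1\times Z_2$, so $R\Gamma_{\{\Phi\geq 0\}}$ admits no K\"unneth decomposition, and freezing $y=y_0$ only controls the slice, not the local cohomology of the product sheaf along a general hypersurface of $M\times N$. This is precisely why Kashiwara--Schapira prove this estimate through the refined characterizations of the microsupport and the cut-off techniques of their \textsection 5.2 rather than by a naive product argument. For (3), the factorization $f=q_2\circ\gamma_f$ is the right decomposition and the projection factor is indeed disposed of by (1) applied to $\ko_M\boxtimes G$, but the closed embedding $\gamma_f$ is where all the content of the non-characteristic inverse image estimate, and of the isomorphism $f^{-1}G\otimes\omega_{M/N}\to f^{!}G$, actually lives; saying it is ``treated directly'' and verified ``on stalks after the same deformation argument'' is not that argument. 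In \cite{KS} this is the hardest of the three items, requiring the bounds on $SS(R\Gamma_Z F)$ and $SS(F_U)$ together with the cut-off lemma, not merely the deformation lemma used in item (2). As a sketch of the strategy your proposal is sound, but these two steps would need to be supplied before it could be called a proof.
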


\begin{coroll}\label{cor:opboim}
Let $F_1,F_2\in D^b(\ko_M)$.
\begin{enumerate}
\item
Assume that $SS(F_1) \cap SS(F_2)^a\subset \nu^*_MM$. Then
\eqn
&& SS(F_1 \stackrel{L}{\otimes} F_2)\subset SS(F_1)+SS(F_2).
\eneqn
\item
Assume that $SS(F_1)\cap SS(F_2)\subset \nu^*_MM$. Then 
\eqn
&& SS(R\cHom(F_1,F_2))\subset SS(F_1)^a+SS(F_2).
\eneqn

\end{enumerate}
\end{coroll}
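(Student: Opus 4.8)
The plan is to derive both inclusions from the external product estimate of Theorem~\ref{th:opboim}(1) combined with the non-characteristic inverse-image estimate of Theorem~\ref{th:opboim}(3), applied along the diagonal embedding $\delta\fc M\to M\times M$, $x\mapsto(x,x)$. Under the identification $T^*(M\times M)\simeq T^*M\times T^*M$ the cotangent maps of $\delta$ are
\[\delta_\pi\fc (x;(\xi_1,\xi_2))\mapsto\bigl((x,x);(\xi_1,\xi_2)\bigr),\qquad \delta_d\fc (x;(\xi_1,\xi_2))\mapsto(x;\xi_1+\xi_2),\]
so that $\nu^*_M(M\times M)=\ker\delta_d=\{(x;(\xi,-\xi))\}$ and, for any two conic subsets $A_1,A_2\subset T^*M$,
\[\delta_d\,\delta_\pi^{-1}(A_1\times A_2)=\{(x;\xi_1+\xi_2)\fc(x;\xi_1)\in A_1,\ (x;\xi_2)\in A_2\}=A_1+A_2.\]
Both assertions then amount to checking that $\delta$ is non-characteristic for the relevant product cone and reading off the image under $\delta_d\,\delta_\pi^{-1}$.

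For (1) I would start from the canonical isomorphism $F_1\stackrel{L}{\otimes}F_2\simeq\delta^{-1}(F_1\boxtimes F_2)$, which follows from $q_i\circ\delta=\id$. By Theorem~\ref{th:opboim}(1), $SS(F_1\boxtimes F_2)\subset SS(F_1)\times SS(F_2)=:\Lambda$. A point of $\delta_\pi^{-1}(\Lambda)\cap\ker\delta_d$ has the form $(x;(\xi,-\xi))$ with $(x;\xi)\in SS(F_1)$ and $(x;-\xi)\in SS(F_2)$, that is $(x;\xi)\in SS(F_1)\cap SS(F_2)^a$; the hypothesis forces $\xi=0$, so $\delta$ is non-characteristic for $\Lambda$. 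Theorem~\ref{th:opboim}(3) then yields $SS(F_1\stackrel{L}{\otimes}F_2)\subset\delta_d\,\delta_\pi^{-1}\Lambda=SS(F_1)+SS(F_2)$.

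For (2) I would use the sheaf $R\cHom(q_1^{-1}F_1,q_2^{-1}F_2)$ on $M\times M$, whose microsupport is contained in $SS(F_1)^a\times SS(F_2)=:\Lambda'$ by the second inclusion of Theorem~\ref{th:opboim}(1), together with the isomorphism $\delta^!R\cHom(q_1^{-1}F_1,q_2^{-1}F_2)\simeq R\cHom(F_1,F_2)\otimes\omega_{M/M\times M}$. Now a point of $\delta_\pi^{-1}(\Lambda')\cap\ker\delta_d$ is $(x;(\xi,-\xi))$ with $(x;-\xi)\in SS(F_1)$ and $(x;-\xi)\in SS(F_2)$, hence $(x;-\xi)\in SS(F_1)\cap SS(F_2)$; the hypothesis again forces $\xi=0$, so $\delta$ is non-characteristic for $\Lambda'$. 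Since $\omega_{M/M\times M}$ is an invertible object (a shifted rank-one local system), tensoring by it does not change the microsupport, and Theorem~\ref{th:opboim}(3) gives $SS(R\cHom(F_1,F_2))\subset\delta_d\,\delta_\pi^{-1}\Lambda'=SS(F_1)^a+SS(F_2)$.

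The only delicate point is the pair of base-change isomorphisms $F_1\otimes F_2\simeq\delta^{-1}(F_1\boxtimes F_2)$ and $\delta^!R\cHom(q_1^{-1}F_1,q_2^{-1}F_2)\simeq R\cHom(F_1,F_2)\otimes\omega_{M/M\times M}$. The first is elementary; the second rests on the commutation $\delta^!R\cHom(G,H)\simeq R\cHom(\delta^{-1}G,\delta^!H)$ and on the identification $\delta^!q_2^{-1}F_2\simeq F_2\otimes\omega_{M/M\times M}$, valid because $\delta$ is non-characteristic so that the natural morphism $\delta^{-1}(-)\otimes\omega_{M/M\times M}\to\delta^!(-)$ of Theorem~\ref{th:opboim}(3) is an isomorphism. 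The orientation twist $\omega_{M/M\times M}$ is harmless for the microsupport, so no further work is needed beyond the two cone computations above.
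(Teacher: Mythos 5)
Your argument is correct and is exactly the standard derivation of this corollary from Theorem~\ref{th:opboim}: pull back along the diagonal $\delta\fc M\to M\times M$, check non-characteristicity (which is precisely the hypothesis $SS(F_1)\cap SS(F_2)^a\subset\nu^*_MM$, resp. $SS(F_1)\cap SS(F_2)\subset\nu^*_MM$), and compute $\delta_d\delta_\pi^{-1}$ of the product cone; the twist by the invertible object $\omega_{M/M\times M}$ is correctly dismissed. The paper itself states this corollary without proof, citing \cite{KS}, and your proof coincides with the one given there.
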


\begin{rem}

In \cite{KS}, the authors introduce the operation $\hat{+}$ that allowed them to bound also the singular support of a tensor product even if non-characteristicity does not occur. The operation $\hat{+}$ can be decomposed in two terms $+$ and $+_\infty$. 

\end{rem}

\begin{rem}
In the case where $f$ is smooth, we would like to stress that the different previous bounds on the microsupport can be written as Lagrangian correspondences. Considering the bound on the direct image, we look at $B=f_\pi{f_d}^{-1}(A)$ with $A\subset T^*M$. In fact $B$ can also be computed by the Lagrangian correspondence associated to $f$:
\[B=\Lambda_f(A)\]
\noindent where
\[\Lambda_f:=\{(x,\xi,f(x),\nu)\in T^*M\times\overline{T^*N},\ \nu\circ df=\xi\} \ .\]
\end{rem}

\section{Subdifferential definition and requirements}

\subsection{Definitions}

\begin{definition}
Let $f\fc X \to \R$. The epigraph of $f$, denoted by $\epi(f)$, is the subset of $X\times\R$ defined by \[\epi(f):=\{(x,t)\in X\times \R; f(x)\leq t\} \ .\]
\end{definition}

\begin{definition}
Let $f\fc X\to \R$. We denote by $ F_f$ the sheaf $\ko_{\epi(f)}$.
\end{definition}

\begin{rem}
In particular, if $f$ is lower semi-continuous (i.e. $\epi(f)$ closed), then $ F_f$ can be seen as an object of the Tamarkin category, also seen as a subcategory of $D^b(\ko_{X\times\R})$ as defined in \cite{Tamarkin}.

This category is designed such that objects "represents" in some sense (not useful here) coisotropic submanifolds of the cotangent bundle. Here, Lagrangians of interest in $T^*X$ are the graphs of the differential of smooth functions and their non-smooth generalizations.
\end{rem}

\begin{definition}
Let  $f\fc X\to\R$ be a lower semi-continuous function.
The subdifferential of $f$ is \[\partial f:=R({ F_f})^a\ .\]

\end{definition} 

We denote by $\partial f|_x:=\partial f \cap {\nu^*_x X}$, or also $\partial f(x):=\partial f|_x$ if there is no possible confusion.

\begin{exemple}
 Let $f\in C^1(X)$ then $\partial f|_x=\{df(x)\}$ according to example \ref{fundex}.
\end{exemple}

\subsection{An equivalent definition} \label{eqdef}

This section is devoted to give a workable definition of the subdifferential for people not familiar with microlocal study of sheaves. Indeed, the subdifferential can be reduced to the computation of the singular support of a sheaf constant on $\epi(f)$. The particular form of this sheaf allows us to make a formulation of $\partial f$ in terms of local behavior of level set cohomology.

\begin{definition}

Let $f\fc X\to \R$ be continuous.

\begin{itemize}
\item We say that a point $x$ is a singular point of $f$,  if the following morphism is not an isomorphism:

\[\lim\limits_{\stackrel{U\ni x}{\epsilon \to 0}} H^*(U\cap f^{< \epsilon+a}) \to  \lim\limits_{\stackrel{U\ni x}{\epsilon \to 0}} H^*(U\cap f^{< a}) \ where\ a:=f(x) \ .\] 
 
\item We say that $x$ is  a critical point, if there exists a sequence of $(\phi_n,x_n)\in C^1(X)\times X$ such that $f-\phi_n$ is singular at $x_n$, $x_n \to x$ and $d\phi_n(x_n)\to 0$.

\end{itemize}

\end{definition}

\begin{prop}
Let $f\fc X \to \R$. Then $\xi \in \partial f|_x$ if and only if  x is a critical point of $x\mapsto f(x)-\langle \xi,x\rangle$ (defined locally).

\end{prop}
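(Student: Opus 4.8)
The plan is to reduce the statement to a purely local computation by unwinding the definitions of $\partial f$, $R(F_f)$, and $Red$, and then to translate the microlocal condition $\xi \in \partial f|_x = R(F_f)^a \cap \nu^*_x X$ into the language of critical points introduced just above. First I would reduce to the case $\xi = 0$: for a fixed covector $\xi \in \nu^*_x X$, the map $T^*X \to T^*X$ sending $\eta \mapsto \eta - \xi$ (a fiberwise translation) corresponds on the level of functions to subtracting the linear form $\langle \xi, \cdot \rangle$, and on the level of epigraphs to a shear of $X \times \R$. Since $\partial$ should be equivariant under such shears, $\xi \in \partial f|_x$ is equivalent to $0 \in \partial(f - \langle\xi,\cdot\rangle)|_x$. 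Thus it suffices to prove the equivalence ``$0 \in \partial g|_x \iff x$ is a critical point of $g$'' for the locally defined function $g = f - \langle\xi,\cdot\rangle$, and the general statement follows.

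Next I would make the dictionary between $R(F_f)$ and the level-set cohomology precise. Recall $R(F_f) = Red(SS(F_f))$, where $Red(A) = \rho(A \cap \{\tau > 0\})$ and $\rho(x,t,p,\tau) = (x, p/\tau)$. So $\xi \in R(F_f)^a|_x$ means $-\xi$ lies in the image under $\rho$ of $SS(F_f) \cap \{\tau > 0\}$ over the fiber at $x$; concretely there must be some $t$ and $\tau > 0$ with $(x,t; \tau\xi^a_{\text{sign}}, \tau) \in SS(F_f)$, where I must be careful with the antipodal and the sign convention $\tilde\lambda = \lambda + dt$. The key is then the definition of $SS$ itself: $(x,t;p,\tau) \in SS(F_f)$ precisely when, for test functions $\phi$ on $X \times \R$ with $d\phi = (p,\tau)$ at the base point, the local sections $(R\Gamma_{\{\phi \geq 0\}} F_f)$ fail to vanish. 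Because $F_f = \ko_{\epi(f)}$ and $\tau > 0$, one may choose $\phi$ of the separated form $\phi(x,t) = t - a - \psi(x)$ with $d\psi(x) = p/\tau$ and $a = f(x)$; the stalk of $R\Gamma_{\{t \geq a + \psi\}}\ko_{\epi(f)}$ computes, via excision and the long exact sequence of the pair, exactly the relative cohomology of the local sublevel sets $f^{<a+\eps}$ versus $f^{<a}$ near $x$. This is where the ``no critical point $\Rightarrow$ inclusion is an isomorphism'' philosophy from the introduction enters: non-vanishing of this stalk is precisely the failure of the sublevel-inclusion morphism to be an isomorphism, i.e. $x$ being a singular point of the tilted function.

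The third step is to pass from singular points to critical points and thereby account for the closure/limit built into the definition of $SS$ and of $Red$. The microsupport is closed by definition, and the reduction $Red$ takes this closure into account; correspondingly the definition of a critical point of $g$ is exactly the closure of the set of singular points of the tilts $g - \phi_n$ as $d\phi_n(x_n) \to 0$ and $x_n \to x$. I would show these two closures match: a covector lies in $SS(F_g)$ over $x$ iff it is a limit of ``genuine'' singular directions at nearby points, which under the dictionary above is the condition that there exist $(\phi_n, x_n)$ with $g - \phi_n$ singular at $x_n$, $x_n \to x$, $d\phi_n(x_n) \to 0$. Here I would lean on Example \ref{fundex}(c) and Proposition \ref{bc} to identify the ``smooth'' singular contributions with covectors $\lambda\, d\phi$, and on the conicity and closedness of $SS$ to organize the limiting procedure.

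I expect the main obstacle to be the careful bookkeeping in the second step: matching the homological mapping-cone/$R\Gamma$ computation with the explicit level-set cohomology morphism $\varinjlim H^*(U \cap f^{<a+\eps}) \to \varinjlim H^*(U \cap f^{<a})$, including getting the direction of the morphism, the role of the antipodal map, and the sign convention $\tilde\lambda = \lambda + dt$ all consistent. In particular one must verify that restricting to the slices $\{\tau > 0\}$ and choosing the separated test function $\phi(x,t) = t - a - \psi(x)$ loses no generality, which relies on $F_f$ being a constant sheaf on an epigraph (so its microsupport has no nonpositive-$\tau$ part over the relevant region). The limiting argument in the third step is conceptually clean once the pointwise dictionary is established, so the real work is ensuring the local cohomological identification is exactly the one named in the definition of singular point.
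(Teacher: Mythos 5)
Your plan follows the paper's own proof essentially step for step: reduce to $\xi=0$, normalize the test function to the separated form $\{\phi(x)\leq t\}$ using conicity, identify the stalk of $R\Gamma_{\{\phi\leq t\}}\ko_{\epi(f)}$ with the sublevel-set cohomology morphism, and then translate the neighborhood/limit built into the definition of $SS$ into the sequence $(\phi_n,x_n)$ defining a critical point. The approach is correct and not meaningfully different from the paper's argument.
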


\begin{proof}

Adding $-\langle \xi,.\rangle$ to $f$, we are reduced to $\xi=0$. Then the statement means $(x,f(x);0,1)\in SS(F_f)$.
To show that a point belongs to a microsupport, we have to compute $R\Gamma_{\{\Psi(x,t)\geq 0\}}F_f$ for $\Psi\in C^1(X\times \R)$. The conic property allow us to consider only function such that $\frac{\partial\Psi}{\partial t}=1$.Then, the implicit function theorem reduces locally the set $\{(\psi(x,t)\geq 0\}$ to $\{\phi(x)\leq t\}$.

\noindent At this point, we have to compute the germ of:

\[R\Gamma_{\{\phi(x)\leq t\}} \ko_{\epi(f)} \ .\]

\noindent We use the following distinguished triangle:

\[R\Gamma_{\{\phi(x)\leq t\}} \ko_{\epi{f}} \to R\mathscr{H}om(\ko_{X\times\R},\ko_{\epi{f}}) \to R\mathscr{H}om(\ko_{\{\phi(x)>t\}},\ko_{\epi{f}}) \to  \ .\]

\noindent We apply the diffeomorphism $(x,t)\mapsto (x,t-\phi(x))$ and  obtain the distinguished triangle:

\[R\Gamma(U;R\Gamma_{\{\phi(x)\leq t\}} \ko_{\epi{f}}) \to R\Gamma(\phi(U);R\mathscr{H}om(\ko_{X\times\R},\ko_{\epi{f-\phi}})) \to R\Gamma(\phi(U);R\mathscr{H}om(\ko_{0>t},\ko_{\epi{f-\phi}})) \to  \ .\]

\noindent We deduce from the previous discussion that :

\[R\Gamma_{\{\phi(x)\leq t\}}\ko_{\epi{f}}|_{(x_0,t_0)}\simeq R\Gamma_{\{0\leq t\}} \ko_{\epi{f-\phi}}|_{(x_0,t_0)} \ .\]

Thus, the condition involved in the definition of the microsupport is equivalent to the fact that there exists a sequence $(\phi_n,x_n)\in C^1(X)\times X$ with $\phi_n(x_n)=f(x_n)$, $\phi_n'(x_n)=\xi_n\to 0$, $x_n\to x$ and such that:

\[\lim\limits_{\stackrel{U\ni x_n}{\epsilon \to 0}} H^j(U\times]-\epsilon,\epsilon[,\ko_{\epi(f-\phi_n)}) \to \\  \lim\limits_{\stackrel{U\ni x_n}{\epsilon \to 0}} H^j(U\times ]-\epsilon,0[,\ko_{\epi(f-\phi_n)})\]

\noindent is not an isomorphism.

This is equivalent to the fact that:

\[\lim\limits_{\stackrel{U\ni x_n}{\epsilon \to 0}} H^*(U\cap (f-\phi_n)^{< \epsilon}) \to  \lim\limits_{\stackrel{U\ni x_n}{\epsilon \to 0}} H^*(U\cap (f-\phi_n)^{< 0}) \]

\noindent is not an isomorphism, which means that $x_n$ is a singular point of $f-\phi_n$ at $x$. The point $x$ is then a critical point of $f$.
\end{proof}

\subsection{The operator $\partial$ is a subdifferential}

In the zoology of subdifferentials a sequence of important requirements has emerged in order to call an object subdifferential. Let $\cS(TX)$ be the set of closed subsets of $T^*X$.

\begin{definition} \label{condition}

For an operator $\partial\fc C^0(X)\to \cS(T^*X)$, we define $\partial f|_x:=\partial f \cup \nu_x^*M$. 
The operator $\partial$ is a subdifferential if the following holds:

\begin{enumerate}
\item {substantiability}: \[x \notin dom(f)\Rightarrow \partial f|_x =\emptyset\ ;\]

\item localizability:\[f=g\ \textrm{on a neighborhood of}\ x \Rightarrow\partial f(x)=\partial g(x)\ ;\]
\item {contiguity}: 

	\begin{enumerate}[a)]
	  \item Let f  be a convex function on $\R^n$.  Then $\partial f|_x$ is the projection via $\rho$ of the dual cone to $\epi(f)$ at $x$;
	  \item Let f be strictly Fréchet differentiable. Then \[df(x)\in\partial f |_x\ ;\]
	\end{enumerate}
	
\item {optimality}: If $f$ reaches a local minimum at x then \[0\in \partial f|_x \ ;\] 

\item {calculability} :
  \begin{enumerate}[a)]
  \item Let $f,g\fc \R^n \to \R$ be such that 
  \[g(x)=\lambda f(Ax+b)+<l,x>+\alpha\]
  with $\lambda \alpha \in \R$, $l,b \in \R^n$ and $A\in M_{n\times n}(\R)$. Then
  \[\partial g|_x = \lambda \partial f|_{Ax+b}\circ A+l \ ;\]
  
  \item Let $f\fc X\times Y \to \R$ be such that $f(x,y)=g(x)+h(y)$. Then
  \[\partial f\subset \partial g \times \partial h\ ;\]
  \end{enumerate}

\item {boundness}: Let $g$ be a Riemannian metric on X and $f$ K-Lipschitz. Then \[||x^*||\leq K,\ \forall x^*\in \partial f(x) \ .\] 

\end{enumerate}
\end{definition}

\begin{thm} 
The homological subdifferential $\partial$ is a subdifferential.
\end{thm}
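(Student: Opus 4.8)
The plan is to dispatch the seven requirements of Definition~\ref{condition} in two groups, according to the tool best adapted to each. The ``pointwise'' conditions (substantiability, localizability, optimality, and the Fr\'echet part of contiguity) I would derive directly from the reformulation of Section~\ref{eqdef}, which identifies $\xi\in\partial f|_x$ with the statement that $x$ is a homological critical point of $f-\langle\xi,\cdot\rangle$; equivalently, by the computation underlying example~\ref{fundex}, that $(x,f(x);-\xi,1)\in SS(F_f)$ up to positive scaling. The ``quantitative'' conditions (calculability and boundness, together with the convex case of contiguity) I would instead obtain from the microlocal estimates of Theorem~\ref{th:opboim} and the conormal bound of Proposition~\ref{bc}.

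For the first group: \textbf{substantiability} holds vacuously when $f\in C^0(X)$ is finite, and in the lower semi-continuous setting it follows from $SS(F_f)\cap\nu^*_MM=\supp(F_f)$, since over $x\notin\mathrm{dom}(f)$ the epigraph carries no point at finite height, so the germs defining the microsupport vanish and $\partial f|_x=\emptyset$. \textbf{Localizability} is immediate from the locality of $SS$: if $f=g$ near $x$ then $F_f\simeq F_g$ near the fibre over $x$, hence their microsupports and their reductions agree there. For \textbf{optimality}, at a local minimum $a=f(x)$ one has $U\cap f^{<a}=\emptyset$ while $x\in U\cap f^{<a+\eps}$, so $H^*(U\cap f^{<a+\eps})\to H^*(U\cap f^{<a})$ is a nonzero map to $0$; thus $x$ is a homological critical point of $f$ and $0\in\partial f|_x$. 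For the Fr\'echet part of \textbf{contiguity}, writing $g=f-\langle df(x),\cdot\rangle$, strict differentiability gives $g(y)=g(x)+o(|y-x|)$ uniformly, which I would use to verify $(x,g(x);0,1)\in SS(F_f)$ (the uniform first-order control, not mere differentiability, is what produces the certifying data $(\phi_n,x_n)$ with $d\phi_n(x_n)\to 0$); hence $df(x)\in\partial f|_x$.

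For the quantitative group: \textbf{boundness} follows from Proposition~\ref{bc}, $SS(F_f)\subset N^*(\epi(f))$, since for $K$-Lipschitz $f$ the vectors $(v,s)$ with $s>K|v|_g$ point into $\epi(f)$, forcing every conormal $(\xi,\tau)$ to satisfy $\tau\geq 0$ and $|\xi|_g\leq K\tau$; dividing by $\tau$ under $\rho$ and applying the antipodal map yields $\|x^*\|\leq K$. For \textbf{calculability~(a)}, the substitution $g(x)=\lambda f(Ax+b)+\langle l,x\rangle+\alpha$ is realized on epigraphs by an affine diffeomorphism of $X\times\R$, whose induced linear symplectomorphism of $T^*(X\times\R)$ carries $SS(F_f)$ to $SS(F_g)$; reading its action on the reduced space gives $\partial g|_x=\lambda\,\partial f|_{Ax+b}\circ A+l$, with attention to the sign of $\lambda$ (as $\lambda<0$ exchanges the epigraph with its reflection and hence the selected half $\{\tau>0\}$). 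For the convex case of \textbf{contiguity~(a)}, I would observe that a convex function has convex, hence contractible, sublevel sets, so its only homological critical points are its minimizers; thus $\xi\in\partial f|_x$ iff $x$ minimizes $f-\langle\xi,\cdot\rangle$ iff $\xi$ is a classical subgradient, which is exactly the $\rho$-projection of the dual cone to $\epi(f)$ at $x$.

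Finally, \textbf{calculability~(b)}: for $f(x,y)=g(x)+h(y)$ the epigraph is the fibrewise sum of $\epi(g)$ and $\epi(h)$ along the $\R$-factor, so $F_f$ is the convolution of $F_g$ and $F_h$ over the addition map $m\fc\R^2\to\R$. Bounding $SS(F_g\boxtimes F_h)\subset SS(F_g)\times SS(F_h)$ by Theorem~\ref{th:opboim}(1) and pushing forward along $(x,t_1,y,t_2)\mapsto(x,y,t_1+t_2)$ by Theorem~\ref{th:opboim}(2), the sum map forces the two $\R$-covectors to coincide, and after reduction this is precisely $\partial f\subset\partial g\times\partial h$. \emph{This is where I expect the genuine difficulty to lie}: the addition map is not proper, so the direct-image estimate of Theorem~\ref{th:opboim}(2) does not apply verbatim. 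The main work will be to establish the needed properness on $\supp(F_g\boxtimes F_h)$ (using that epigraphs are bounded below over compact sets), or else to run the argument directly on level-set cohomology as a Thom--Sebastiani/K\"unneth statement for homological critical points, carefully controlling the projective limits that define the comparison maps of Section~\ref{eqdef}.
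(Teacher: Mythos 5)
Your decomposition and toolkit essentially match the paper's: it too runs through the axioms of Definition~\ref{condition} one by one, using locality of $SS$ for localizability, a direct cohomological computation for optimality (yours, via the critical-point reformulation of Section~\ref{eqdef}, is if anything cleaner than the paper's), the conormal bound of Proposition~\ref{bc} for boundness, and the estimate of Theorem~\ref{th:opboim} applied to $F_f=Rs_!(F_g\boxtimes F_h)$ for calculability~(b). The ``genuine difficulty'' you flag there is real but mild and is resolved exactly as you suggest: epigraphs of (lower semi-)continuous functions are bounded below over compact sets, so the sum map is proper on $\supp(F_g\boxtimes F_h)$ and Theorem~\ref{th:opboim}(2) applies.

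The places where your sketch stops short of a proof are precisely where the paper leans on a lemma or a citation. First, calculability~(a) for $\lambda<0$: the affine map $(x,t)\mapsto(\cdot,\lambda t+\cdot)$ carries $\epi(f)$ to the \emph{hypograph} $\{g(x)\geq t\}$, not to $\epi(g)$, so ``reading off the action on $SS$'' does not conclude; one must still compare $SS(\ko_{\{g(x)\geq t\}})$ with $SS(\ko_{\epi(g)})$ away from the zero section, which is the content of the paper's Lemma~\ref{opp} (duality $D(\ko_{\epi(f)})\simeq\ko_{\mathring{\epi(f)}}$ plus the excision triangle). Your parenthetical acknowledges the sign issue but does not supply this argument. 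Second, contiguity~(a): ``convex sublevel sets are contractible, hence the only critical points are minimizers'' only classifies \emph{singular} points; a critical point of a convex $g$ is by definition a limit of singular points of $g-\phi_n$ with $\phi_n$ merely $C^1$, and $g-\phi_n$ is no longer convex, so the one-line argument gives only the easy inclusion (classical subgradient $\Rightarrow$ homological). For the converse the clean route is the upper bound $SS(F_f)\subset N^*(\epi(f))$ of Proposition~\ref{bc}, the conormal cone of a convex epigraph being its dual cone; this is the computation in [KS] that the paper cites. Third, contiguity~(b): the paper does not prove it directly at all, invoking instead Ioffe's result that it is formally implied by boundness and calculability; your direct construction of witnesses $(\phi_n,x_n)$ from strict differentiability is plausible (perturb by $\pm\eps\sqrt{|y-x|^2+\delta^2}$ to manufacture local minima and diagonalize in $\eps,\delta$) but as written is a declaration of intent rather than an argument. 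None of these is a wrong turn, but each needs to be filled in before the proof is complete.
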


\begin{proof}
We check the properties in the order of the previous definition.
\begin{enumerate}
\item Substantiability is tautological.
\item Localizability is true since the definition of the microsupport is local.
\item Contiguity a). In this case, it has been computed in \cite{KS} that  $SS(F_f)$ is the dual cone of $\epi(f)$.

Contiguity b) is equivalent to boundness and calculability \cite{Ioffe}.

\item Optimality. Suppose that we are at a maximum and $f(x_0)=0$. For a given open ball $U$ containing $(x_0,f(x_0))$ small enough, consider a system of coordinates $(y,t) \in \R^k\times \R$ on $U$ with origin $(x_0,f(x_0))$. Let $\|.\|$ a euclidean norm and $\phi(y,t)=t-\|y\|^2$. Let $V\in B\times ]-\epsilon,\epsilon[$ ($B$ ball centered at $(x_0,f(x_0))$). We get the following morphism: \[H^\bullet(V,{F}_f) \to H^\bullet(V\cup \{\phi<0\},F_f)\ .\]
This last morphism is clearly not an isomorphism, the left hand-side being isomorphic to the cohomology of $B$ and the right handside to the cohomology of $B\setminus\{0\}$. It is not an isomorphism between inductive limits.
According to the definition of the singular support, $(1,0)=\phi^{'}(0,0)\in SS({F}_f)|_{(x_0,f(x_0))}$.

\item Calculability.

a) Except for the multiplication by $\lambda$, all calculability properties can be easily deduced from the behavior of $SS$ under the action of diffeomorphism on the base.

The case of $\lambda=-1$ is the only one that could be difficult to study. But, lemma \ref{opp} gives that $-\partial(f)=\partial(-f)$.

b) First, we have $F_f:=Rs_!(F_g\boxtimes F_h)$ with
\[s\fc X\times Y \times \R \times \R \to X \times Y \times \R\]
\noindent the sum over the $\R$ variables.

By theorem \ref{th:opboim} we have: 

\[ SS(F_f)\subset \Lambda_s(SS(F_g)\times SS(F_h))\ ,\]

where
\[\Lambda_s=\{x,y,t_1,t_2,\xi,\nu,\tau_1,\tau_2,x,y,t_1+t_2,\xi,\nu,\tau=\tau_1=\tau_2\}\ .\]

\noindent We deduce $\partial f=\partial g \times \partial h$.

\item Boundness. Suppose that f is $1$-Lipschitz. Without loss of generality, let $X=\R^n$, $x=0$ and $f(x)=0$. We want to compute a bound for $N^*(\epi(f))$. An easy reformulation and the Lipschitz property shows that :

\[C(\epi(f),T(X\times\R) \setminus \epi(f))\subset \{(v,r), r\leq ||v||\} \ .\]

Thus,

\[\{v,r), r > ||v||\}\subset N(\epi(f))\]

\noindent and
\[ N^*(\epi(f))\subset \{(\xi,\tau), \tau \geq ||\xi||\} \ .\]

According to lemma \ref{cbound}, $SS(F_f)\subset \{(\xi,\tau), \tau \geq ||\xi||\}$. By applying $\rho$, we get the result.

\end{enumerate}
\end{proof}

\begin{rem}\label{nicelip}
 The proof of boundness shows also that $SS(\ko_{\epi(f)})$ can be totally recovered from $\rho(SS(\ko_{\epi(f)}))=\partial f$, the intersection with $\{\tau=0\}$ being included in the zero section of $T^*(X\times\R)$.
\end{rem}

\subsection{Relation with other subdifferentials}

It is possible, in the case where $f$ is locally Lipschitz, to give a relation between Clarke subdifferential and the homological subdifferential.

\begin{definition}
Let $f\fc X \to \R$ a locally Lipschitz map. We denote by $f^\circ(x,v)$ the limit:
\[\limsup\limits_{y\to x,t\downarrow 0} \frac{f(y+tv)-f(y)}{t} \ .\]
The Clarke subdifferential, denoted by $\partial_c f$, is the set :
\[ \{\xi \in \nu_x^*X, \langle \xi,v\rangle \leq f^\circ(x,v)\ \forall v\}\ .\]
\end{definition}

\begin{rem}
 We need to choose local coordinates to define $f^\circ(x,v)$. Nevertheless, $\partial_c f$ does not depend of this choice.
\end{rem}

In the finite dimensional case, Rademacher theorem applies to Lipschitz functions. Clarke, in \cite{Clarke}, gives a very nice formulation of his subdifferential in terms of limits of almost all derivatives.
We denote by $\Omega_f$ the set of points at which $f$ fails to be differentiable.

\begin{thm}[Clarke] \label{cbound}
Let $f$ a function locally Lipschitz around $x$, and suppose $S$ is any set of Lebesgue measure $0\in \R^n$. Then 

\[\partial_c f(x)=co\{ \lim df (x_i): x_i\to x, x_i\notin S, x_i\notin \Omega_f \}\]

\noindent where $co$ designs the convex hull.
 
\end{thm}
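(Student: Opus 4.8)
The plan is to prove the equality by exhibiting both sides as convex, compact subsets of $\R^n$ with the same support function. Write $G:=co\{\lim df(x_i):x_i\to x,\ x_i\notin S,\ x_i\notin\Omega_f\}$ for the right-hand side. Both $G$ and $\partial_c f(x)$ are bounded by the local Lipschitz constant $K$ (the gradients satisfy $\|df(x_i)\|\leq K$, and $\langle\xi,v\rangle\leq f^\circ(x,v)\leq K\|v\|$ forces $\|\xi\|\leq K$), and both are closed and convex, so it suffices to establish $h_G=h_{\partial_c f}$, where $h_A(v)=\sup_{\xi\in A}\langle\xi,v\rangle$ is the support function. The support function of the Clarke subdifferential is exactly the generalized directional derivative: $h_{\partial_c f}(v)=f^\circ(x,v)$, a standard consequence of the fact that $v\mapsto f^\circ(x,v)$ is finite, positively homogeneous and subadditive, so that $\partial_c f(x)$ is its convex subdifferential at the origin and $f^\circ(x,\cdot)$ is its own support function.

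For the inequality $h_G\leq h_{\partial_c f}$ it is enough to check that every accumulation gradient $\zeta=\lim df(x_i)$ lies in $\partial_c f(x)$, since $\partial_c f(x)$ is convex and closed. At each point of differentiability $x_i\notin\Omega_f$ one has $\langle df(x_i),v\rangle=f'(x_i;v)\leq f^\circ(x_i,v)$; because $f^\circ(\cdot,v)$ is upper semicontinuous in its first argument, passing to the limit yields $\langle\zeta,v\rangle\leq f^\circ(x,v)$ for every $v$, i.e.\ $\zeta\in\partial_c f(x)$.

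For the reverse inequality $h_G(v)\geq f^\circ(x,v)$, fix $v$ and choose, from the definition of the limsup, sequences $y_k\to x$ and $t_k\downarrow 0$ with $\big(f(y_k+t_kv)-f(y_k)\big)/t_k\to f^\circ(x,v)$. Since $f$ is Lipschitz it is absolutely continuous along the segment $[y_k,y_k+t_kv]$, and the fundamental theorem of calculus gives
\[\frac{f(y_k+t_kv)-f(y_k)}{t_k}=\frac{1}{t_k}\int_0^{t_k}\langle df(y_k+sv),v\rangle\,ds\]
wherever the integrand is defined. By Fubini's theorem applied to the null set $S\cup\Omega_f$, after translating $y_k$ by an amount $o(t_k)$ transverse to $v$ (which leaves the limit unchanged), the segment meets $S\cup\Omega_f$ in a one-dimensional null set, so the integral is legitimate; hence there exists $s_k\in[0,t_k]$ with $z_k:=y_k+s_kv\notin S\cup\Omega_f$ at which the integrand attains at least its mean value, giving $\langle df(z_k),v\rangle\geq f^\circ(x,v)-\varepsilon_k$ with $\varepsilon_k\to 0$. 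As $z_k\to x$ and the gradients are bounded by $K$, a subsequence satisfies $df(z_k)\to\zeta\in G$ with $\langle\zeta,v\rangle\geq f^\circ(x,v)$, so $h_G(v)\geq f^\circ(x,v)$.

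Combining the two inequalities yields $h_G=h_{\partial_c f}$ and therefore $G=\partial_c f(x)$. The principal difficulty is the last step: guaranteeing, by the Fubini/transversality argument, that the segments realizing $f^\circ(x,v)$ can be perturbed to avoid the prescribed null set $S\cup\Omega_f$ on all but a one-dimensional null set, so that the integral representation of the difference quotient remains valid and produces genuine points of differentiability $z_k$ whose gradients approximate $f^\circ(x,v)$ in the direction $v$.
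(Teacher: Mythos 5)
The paper does not prove this statement: it is quoted verbatim as Clarke's theorem (Theorem 2.5.1 of \cite{Clarke}) and used as a black box, so there is no internal proof to compare against. Your argument is, in substance, Clarke's own proof, and it is correct: you reduce the set equality to equality of support functions using that $f^\circ(x,\cdot)$ is the support function of $\partial_c f(x)$, you get $G\subset\partial_c f(x)$ from upper semicontinuity of $f^\circ(\cdot,v)$, and you get $f^\circ(x,v)\leq h_G(v)$ by the Fubini perturbation of the segments $[y_k,y_k+t_kv]$ together with the integral representation of the difference quotient. Two phrasings deserve a touch more care, though neither is a gap. First, a measurable integrand need not \emph{attain} its mean value; what you actually use is that the set $\{s\in[0,t_k]:\langle df(y_k+sv),v\rangle\geq \text{(mean)}-\varepsilon_k\}$ has positive Lebesgue measure, hence meets the complement of the one-dimensional null set coming from $S\cup\Omega_f$ --- your $\varepsilon_k$ already absorbs this, so just say it that way. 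Second, the fundamental theorem of calculus gives $g(t_k)-g(0)=\int_0^{t_k}g'(s)\,ds$ for $g(s)=f(y_k+sv)$, and the identification $g'(s)=\langle df(y_k+sv),v\rangle$ is only valid at $s$ with $y_k+sv\notin\Omega_f$; this is exactly why the Fubini perturbation is needed, and it is worth stating that the Lipschitz bound makes the difference quotient change by $o(1)$ under an $o(t_k)$ translate, which you do. You might also note explicitly that $G\neq\emptyset$ (Rademacher plus the bound $\|df\|\leq K$), since the support-function argument presupposes it; your construction of the points $z_k$ supplies this. Overall: correct, and essentially the canonical argument the paper is implicitly relying on.
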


\begin{thm}
Let $f$ be a locally Lipschitz function around $x$. Then \[\partial f\subset\partial_c f \ .\]
\end{thm}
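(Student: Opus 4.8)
The plan is to reduce everything to a pointwise estimate on the Kashiwara--Schapira normal cone of the epigraph. Fix the point $x_0$ near which $f$ is Lipschitz and set $p_0=(x_0,f(x_0))$. By Proposition \ref{bc} we have $SS(F_f)=SS(\ko_{\epi(f)})\subset N^*(\epi(f))$, and since $SS(F_f)$ projects into $\supp(F_f)=\epi(f)$ while the non-zero-section covectors of $N^*(\epi(f))$ sit over the graph $\{t=f(x_0)\}$ (interior points of the epigraph carry only the zero section), every $(\xi,\tau)\in SS(F_f)$ with $\tau>0$ lies in $N^*_{p_0}(\epi(f))$. Unwinding $\partial f=Red(SS(F_f))^a$ with $\rho(x,t,p,\tau)=(x,p/\tau)$, an element of $\partial f|_{x_0}$ is exactly a covector $-\xi/\tau$ with $(x_0,f(x_0),\xi,\tau)\in SS(F_f)$ and $\tau>0$. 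Hence it suffices to prove: if $(\xi,\tau)\in N^*_{p_0}(\epi(f))$ with $\tau>0$, then $\langle -\xi/\tau,v\rangle\le f^\circ(x_0,v)$ for all $v$, i.e. $-\xi/\tau\in\partial_c f(x_0)$.

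The crux is the following description of the strict normal cone: for every $v\in\R^n$ and every $r>f^\circ(x_0,v)$ one has $(v,r)\in N_{p_0}(\epi(f))$, equivalently $(v,r)\notin C_{p_0}(M\setminus\epi(f),\epi(f))$. I would prove this by contradiction directly from the definition of the Whitney normal cone: a representing sequence provides $(a_n,\sigma_n)\notin\epi(f)$, $(b_n,\mu_n)\in\epi(f)$ converging to $p_0$ and $c_n\ge 0$ with $c_n(a_n-b_n)\to v$ and $c_n(\sigma_n-\mu_n)\to r$. The inequalities $\sigma_n<f(a_n)$ and $\mu_n\ge f(b_n)$ give $c_n(\sigma_n-\mu_n)\le c_n\big(f(a_n)-f(b_n)\big)$, and writing $t_n=1/c_n\downarrow 0$, $a_n=b_n+t_n(v+\varepsilon_n)$ with $\varepsilon_n\to 0$, the Lipschitz bound replaces $v+\varepsilon_n$ by $v$ up to an error $O(|\varepsilon_n|)$, so that
\[c_n\big(f(a_n)-f(b_n)\big)=\frac{f(b_n+t_nv)-f(b_n)}{t_n}+o(1).\]
Taking $\limsup$ and using $b_n\to x_0$, $t_n\downarrow 0$ together with the definition of $f^\circ(x_0,v)$ as a $\limsup$ over all such pairs yields $r\le f^\circ(x_0,v)$, contradicting $r>f^\circ(x_0,v)$. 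The degenerate case $v=0$ follows at once from the Lipschitz bound.

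Granting this, the conclusion is pure polarity. Since $N^*_{p_0}(\epi(f))=N_{p_0}(\epi(f))^\circ$, membership of $(\xi,\tau)$ means $\langle\xi,w\rangle+\tau s\ge 0$ for all $(w,s)\in N_{p_0}(\epi(f))$; applying this to $(v,r)$ with $r>f^\circ(x_0,v)$ and letting $r\downarrow f^\circ(x_0,v)$ gives $\langle\xi,v\rangle+\tau f^\circ(x_0,v)\ge 0$, whence $\langle -\xi/\tau,v\rangle\le f^\circ(x_0,v)$ after dividing by $\tau>0$. As $v$ is arbitrary, $-\xi/\tau\in\partial_c f(x_0)$, and therefore $\partial f|_{x_0}\subset\partial_c f(x_0)$.

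The main obstacle I anticipate is the secant estimate of the middle paragraph: one must pass from the two independent sequences $(a_n)$ and $(b_n)$ to a single difference quotient based at $b_n$ in direction $v$, which is precisely where the $\limsup$ over both $y\to x_0$ and $t\downarrow 0$ in the definition of $f^\circ$ is needed — a plain directional derivative would not close the argument. A secondary point to pin down is the sign convention of the polar cone $(\cdot)^\circ$: checking Example \ref{fundex}(c) shows it must be $\gamma^\circ=\{\xi:\langle\xi,v\rangle\ge 0\ \forall v\in\gamma\}$ for $SS(\ko_{\epi(f)})\subset N^*(\epi(f))$ to be compatible with the computation $\partial f|_x=\{df(x)\}$ in the $C^1$ case, and it is under this convention that the final division by $\tau$ produces exactly the inequality defining $\partial_c f$.
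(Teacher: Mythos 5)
Your proposal is correct and follows essentially the same route as the paper: reduce via Proposition \ref{bc} to estimating the Whitney/strict normal cone of $\epi(f)$ at $(x_0,f(x_0))$, show through the Lipschitz difference-quotient computation that $\{(v,r):r>f^\circ(x_0,v)\}$ avoids $C_{p_0}(M\setminus\epi(f),\epi(f))$, and conclude by polarity. Your write-up is if anything slightly more careful than the paper's (the contrapositive framing, the explicit treatment of $v=0$, and the sign-convention check), but the underlying argument is identical.
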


\begin{proof}
We will use the bound of the Proposition \ref{bc}.
Let $(v,r)\in C(X\times \R \setminus \epi(f),\epi(f))$ with $(v,r)\neq 0$. This is equivalent to : $\exists x_n, y_n \to x, \exists c_n>0$ such that :

\[\left\{
    \begin{array}{ll}
      (1)\ c_n(x_n-y_n)\to v   \\
      (2)\  \exists a_n, b_n \downarrow 0, c_n(f(x_n)-a_n-f(y_n)-b_n)\to r
    \end{array}
\right.
\]

We denote by $s_n:=a_n+b_n\geq 0$.

Point $(1)$ and $v\neq 0$ implies that $c_n\to \infty$. Thus we can define $t_n:=c_n^{-1}>0\to 0$. We take the notation $v_n:=t_n^{-1}(x_n-y_n) \to 0$.

Point $(2)$ implies $t_n^{-1}(f(y_n+t_nv_n)-f(y_n)-s_n)\to r$.

But, \[t_n^{-1}(f(y_n+t_nv_n)-f(y_n))= 
\underbrace{t_n^{-1}(f(y_n+t_n v_n)-f(y_n+t_n v))}_{o(|v_n-v|)}+t_n^{-1}(f(y_n+t_n v)-f(y_n))\]
 
\noindent which implies that $r\leq f^\circ (x,v)$.
It follows that $N_x(\epi(f))=\{(v,r), r>f^\circ(x,v)\}$.
From \cite{KS}, we compute the polar set of $N_x(\epi(f))$ intersected with $\tau=-1$. Thus, $(-\xi,1) \in N_x(\epi(f))^*$ is equivalent to 

\[ \langle (-\xi,1),(v,r) \rangle \geq 0 \ .\]

Thus, $\xi \in \{\xi, \langle \xi,v \rangle \leq f^\circ (x,v) \}$.

\end{proof}

\begin{rem}
We would like to stress the fact that the definition of the polar set in \cite{KS} differs from the definition in \cite{Clarke} by sign.
\end{rem}

Here is an example such that $\partial f \neq \partial_c f$.

\begin{exemple}

Let $$ \left\{ {\begin{array}{ll} f \fc \R^2 \to \R \\ (x,y)\to |x|+|y|-\sqrt{x^2+y^2} \end{array}} \right. \ .$$ We would like to compute $\partial_c f$ and $\partial f$ at $0$.

First, Clarke's Proposition \ref{cbound} gives us that $\partial_c f$ belongs to the convex hull of \\ $\{\lim \nabla f(x_i,y_i), (x_i,y_i) \to 0  \}$. In this case and  when define, we get : 

\[ d f(x,y) =\left(sign(x)-\frac{x}{\sqrt{x^2+y^2}},sign(y)-\frac{y}{\sqrt{x^2+y^2}}\right) \ .\]

Thus,

\[\{\lim df(x_i,y_i), (x_i,y_i) \to 0  \}= \{(x,y),y=\pm 1 \mp \sqrt{(2-|x|)|x|}\}\]

\begin{center}
	\includegraphics[width=0.30\textwidth]{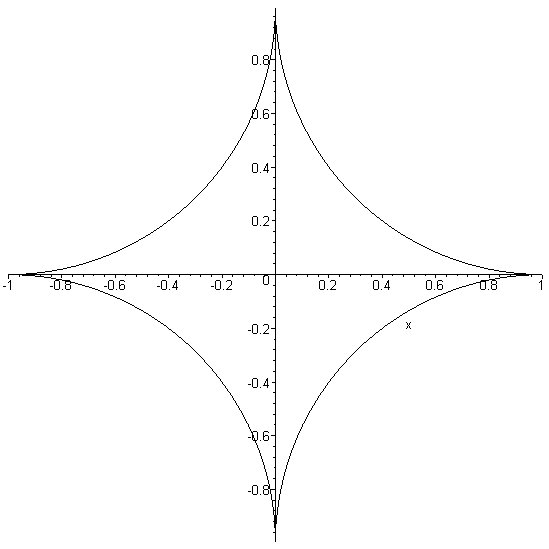}
\end{center}

and,

\[\partial_c f|_{(0,0)}=co(\{(-1,1),(1,1),(1,-1),(-1,-1)\}) \ .\]

It is usually hard to show that a point is not in the singular support, because we have to compute some cohomology group with varying parameter depending of choices of $C^1$ function. Nevertheless, we will use here the result of Appendix \ref{legendre}.

We will compute the germ of the extended Legendre transform of $f$ in a neighborhood of $(\frac{1}{2},\frac{1}{2})$. Indeed, this point belongs to Clarke's subdifferential and we want to show that it does not belong to the homological subdifferential.

The germ at $(a_0,b_0,t_0)$ near $(\frac{1}{2},\frac{1}{2},t_0)$ of $\hat{f}$ is given by the compact cohomology of the set :

\[A:=\{(x,y), |x|+|y|-\sqrt{x^2+y^2}-a_0 x -b_0 y\leq t_0 \} \ .\]

We use the fact that $|x|+|y|-\sqrt{x^2+y^2}-a_0 x -b_0 y$ is homogeneous of degree $1$. If $t_0\geq 0$, $A$ is a closed set null homotopic with $n$ branches going to infinity (see the black part of the next pictures ). 

\begin{center}
\includegraphics[width=0.30\textwidth]{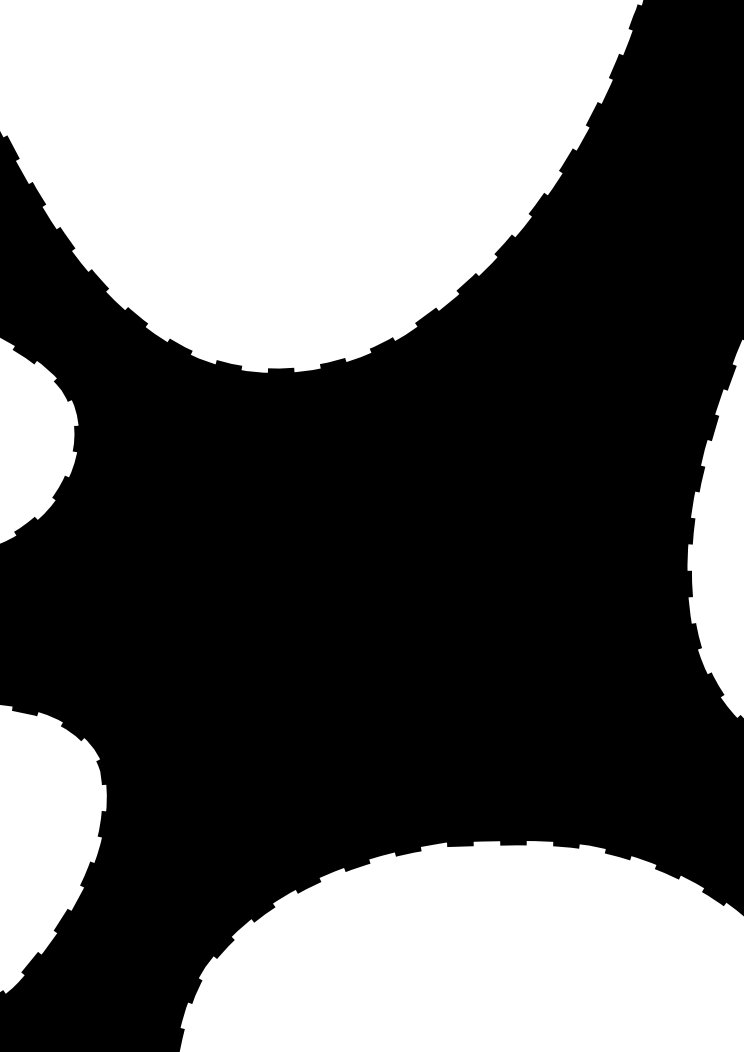}
\end{center}

Thus, $H_c^*(A)=\ko^{n-1}$.

The number $n$ is the cardinal of connected components of $\{t,|cos(t)|+|sin(t)|-1-a_0 cos(t) -b_0 sin(t)>0\}\subset\mathbb{S}^1 $ on a period. A careful study shows that $n=1$, hence, $H_c^*(A)=0$.

For $t_0<0$, $H_c^*(A)=0$.  

Locally, $\hat{f}$ is the null sheaf. Hence, there is no microsupport above $(\frac{1}{2},\frac{1}{2},t)$ for all $t$ and we deduce $(\frac{1}{2},\frac{1}{2}) \notin \partial f$.

This method and the computation of the subdifferential outside $0$ allow us to show that the subdifferential at $0$ is :

\begin{center}
	\includegraphics[width=0.30\textwidth]{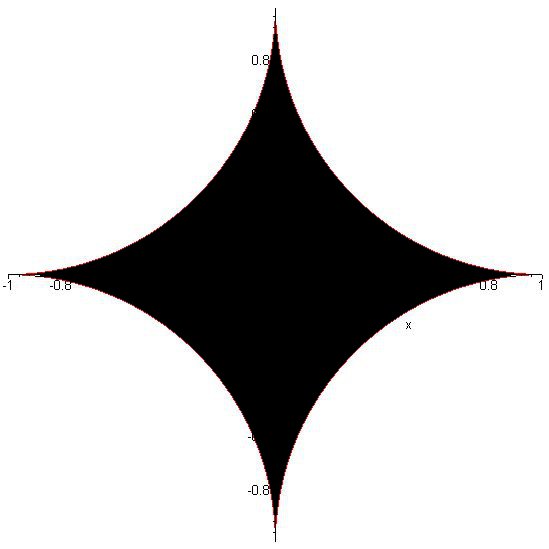}
\end{center}

Another way to prove that Clarke's subdifferential and homological one are different is the following. Consider the sequence of functions : 

\[f_n(x)=|x|^{1+\frac{1}{n}}+|y|^{1+\frac{1}{n}}+(x^2+y^2)^{\frac{1}{2}+\frac{1}{n}} \ .\]

The sequence $f_n$ is $C^1$ and converges to $f$ in $C^0$ topology on every compact.
For a ball of size $1$ around $0$, we look at the union of differentials of $f_n$. For $n$ large, $(\frac{1}{2},\frac{1}{2})$ is not in this set. According to Proposition \ref{limit}, $(\frac{1}{2},\frac{1}{2})$ is not in the subdifferential.

\end{exemple}

\begin{rem}
The previous example shows also that Clarke's differential does not satisfy Proposition \ref{limit} which will be necessary in a future paper on applications to non convex Aubry-Mather theory.
\end{rem}
\begin{rem}
 Stéphane Guillermou pointed out also that the sheaf considered is conic in the sense of \cite{KS}. The traditional Fourier-Sato transform for conic sheaves gives a more straight way of computing its microsupport.
\end{rem}

The homological subdifferential $\partial$ differs from the Clarke one by Proposition\ref{limit}. Nevertheless, according to \cite{jourani}, another non convex subdifferential possesses the upper-limit Proposition \ref{limit}.
It is $\partial_G$ the G-subdifferential also named approximate subdifferential. It is included in the Clarke subdifferential and its convex hull is Clarke's subdifferential.
This subdifferential possesses a minimality property proved by Ioffe in \cite{Ioffe2} :

\begin{thm}[Thm 9 \cite{Ioffe2}]
 Assume that a set $\partial f\subset T^*X$ is associated with all Lipschitz function $f$, such that the following is true:
 \begin{itemize}
  \item $0\in \partial f(x)$ if x is a minimum;
  \item $\partial f|_x=\limsup\limits_{y\to x} \partial f|_y$;
  \item for a convex function $\partial f$ is the usual one;
  \item $\partial (f+g)\subset\partial f+\partial g$ provided that $g$ is convex. 
  \end{itemize}

 Then for every $f$ Lipschitz, $\partial_G f \subset\partial f$.
\end{thm}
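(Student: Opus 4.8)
The plan is to reduce the asserted inclusion to a single pointwise statement about an elementary subdifferential and then to exploit the robustness axiom. Recall that in finite dimensions the G-subdifferential of a Lipschitz function admits the representation
\[\partial_G f(x)=\limsup_{y\to x}\partial_P f(y),\]
where $\partial_P f(y)$ denotes the proximal subdifferential, i.e. the set of $\xi$ for which there are $\sigma,\delta>0$ with $f(z)\geq f(y)+\langle\xi,z-y\rangle-\sigma|z-y|^2$ whenever $|z-y|<\delta$; this is standard in variational analysis and may be taken as the working description of $\partial_G$ in our setting. Granting it, it suffices to prove the pointwise inclusion $\partial_P f(y)\subset\partial f(y)$ for every $y$ near $x$: taking the Kuratowski upper limit of both sides and invoking the second hypothesis $\partial f|_x=\limsup_{y\to x}\partial f|_y$ then yields $\partial_G f(x)=\limsup_{y\to x}\partial_P f(y)\subset\limsup_{y\to x}\partial f(y)=\partial f(x)$.

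The heart of the argument is therefore the claim that a proximal subgradient belongs to $\partial f$. I would fix $y$ and $\xi\in\partial_P f(y)$, with $\sigma,\delta$ as above, and introduce the smooth convex function
\[c(z):=-\langle\xi,z\rangle+\sigma|z-y|^2,\]
so that, by the defining inequality, the function $u:=f+c$ attains a local minimum at $y$. The minimum axiom then gives $0\in\partial u(y)$. Since $c$ is convex and differentiable, the convex-function axiom gives $\partial c(y)=\{\nabla c(y)\}=\{-\xi\}$. Applying the convex sum rule to $u=f+c$ one obtains
\[0\in\partial u(y)\subset\partial f(y)+\partial c(y)=\partial f(y)-\xi,\]
whence $\xi\in\partial f(y)$. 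This establishes $\partial_P f(y)\subset\partial f(y)$ and, combined with the first paragraph, proves the theorem. Note that the convex sum rule is used in exactly the direction that lets one read off membership in $\partial f$ from a local minimum of a convex perturbation of $f$.

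I expect the only genuine obstacle to be the justification of the representation $\partial_G f(x)=\limsup_{y\to x}\partial_P f(y)$ and, relatedly, the reduction to proximal (rather than merely Fr\'echet) subgradients. The difficulty is that a Fr\'echet subgradient only controls $f$ up to an $o(|z-y|)$ term, which need not be dominated by a smooth convex perturbation with prescribed gradient, so the clean decomposition $u=f+c$ above can fail; passing through proximal subgradients, which are dense enough to recover $\partial_G$ in the Kuratowski limit, is what makes the minimum-plus-sum-rule mechanism applicable. Everything else is a formal consequence of the four axioms, so I would devote most of the write-up to pinning down this density/representation step and to checking that, for Lipschitz $f$, the ordinary limit $\limsup_{y\to x}$ coincides with the $f$-attentive one used in the definition of $\partial_G$.
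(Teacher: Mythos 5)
The paper does not prove this statement at all: it is quoted verbatim as Theorem~9 of Ioffe's \emph{Subdifferentials and Applications I} and used as a black box, so there is no in-paper argument to compare against. Your proposal is a genuine (and essentially correct) proof of Ioffe's minimality theorem along standard lines: the pointwise step --- perturb $f$ by the smooth convex function $c(z)=-\langle\xi,z\rangle+\sigma|z-y|^2$, apply the minimum axiom to $f+c$, and unwind via the convex sum rule and the convex-function axiom to get $\xi\in\partial f(y)$ --- is exactly the right mechanism, and the passage to $\partial_G$ via the robustness axiom is immediate once one grants the representation $\partial_G f(x)=\limsup_{y\to x}\partial_P f(y)$. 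Two points deserve explicit care in a full write-up. First, the representation of $\partial_G$ by proximal subgradients: for Lipschitz $f$ on a finite-dimensional space the approximate subdifferential coincides with the limiting subdifferential, and the proximal density theorem then gives the claimed formula; you correctly identify this as the load-bearing external input, and it should be cited rather than asserted (the $f$-attentivity issue indeed evaporates for Lipschitz $f$). Second, the minimum axiom as stated says ``if $x$ is a minimum'' without specifying local or global; your $f+c$ only has a \emph{local} minimum at $y$, and the axiom list contains no localizability clause. This is repairable within the axioms: since $f$ is Lipschitz you may add to $c$ a further convex penalty such as $M\max(0,|z-y|-\delta/2)$ with $M$ large, which keeps $c$ convex (so the sum rule and convex-function axiom still apply, the extra term having derivative $0$ at $y$) and promotes the local minimum to a global one. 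With those two points pinned down the argument is complete.
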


Previous discussions, and the closure of the singular support give, according to Ioffe 'stheorem, the next theorem:

\begin{thm}
Let $f\fc X\to\R$ a Lipschitz function. Then,  \[ \partial_G f\subset \partial f \subset co(\partial_G(f))=\partial_C f \ .\]
\end{thm}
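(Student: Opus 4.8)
The plan is to regard this statement as the assembly of facts already at our disposal, with Ioffe's minimality theorem recalled just above supplying the only genuinely new inclusion. I would first dispose of the right-hand portion: the equality $co(\partial_G f)=\partial_C f$ is the characterization of the approximate subdifferential quoted from \cite{Ioffe2, jourani} and needs no further argument, while the inclusion $\partial f\subset\partial_C f$ is exactly the theorem proved immediately before (there stated for the Clarke subdifferential $\partial_c=\partial_C$). Combining these gives $\partial f\subset\partial_C f=co(\partial_G f)$ at once.

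The content therefore lies in the leftmost inclusion $\partial_G f\subset\partial f$, which I would obtain by applying Ioffe's theorem to the operator $\partial$ itself. This requires verifying its four hypotheses for the homological subdifferential. The hypothesis on minima, $0\in\partial f(x)$ whenever $x$ is a minimizer, is the optimality property checked in the theorem following Definition \ref{condition}. The agreement with the usual subdifferential on convex functions is contiguity a) of that same theorem. The subadditivity $\partial(f+g)\subset\partial f+\partial g$ for convex $g$ follows from the sublinearity estimate established earlier in the paper, which bounds the microsupport of the sheaf $F_{f+g}$ (a convolution of $F_f$ and $F_g$ along the $\R$-factor) in terms of those of $F_f$ and $F_g$ and then reduces via $\rho$. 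Finally, the robustness $\partial f|_x=\limsup_{y\to x}\partial f|_y$ is where ``the closure of the singular support'' enters: the inclusion $\limsup_{y\to x}\partial f|_y\subset\partial f|_x$ is the closedness of the graph of $\partial$, and Remark \ref{nicelip} ensures that in the Lipschitz case no microsupport escapes into $\{\tau=0\}$ off the zero section, so that the reduction $R(F_f)^a$ is genuinely closed; the reverse inclusion is built into the sequential definition of a critical point, which already quantifies over sequences $x_n\to x$ with $d\phi_n(x_n)\to 0$.

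Once the four hypotheses are in place, Ioffe's theorem yields $\partial_G f\subset\partial f$, and chaining with the first paragraph gives $\partial_G f\subset\partial f\subset co(\partial_G f)=\partial_C f$, as claimed. I expect the main obstacle to be the robustness hypothesis: one must confirm that the reduced set $\partial f=R(F_f)^a$ is closed and upper semicontinuous in $x$, a property that is not among the explicitly labelled conditions of Definition \ref{condition} and has to be extracted from the closedness and $\R^+$-conicity of $SS(F_f)$ together with the Lipschitz normalization of Remark \ref{nicelip}. The subadditivity input, though the deepest ingredient, is already available from the sublinearity part of the paper and so does not need to be re-proved here.
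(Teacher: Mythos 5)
Your proposal is correct and follows exactly the route the paper takes: the paper's entire proof is the sentence ``Previous discussions, and the closure of the singular support give, according to Ioffe's theorem, the next theorem,'' and you have simply unpacked it --- the right-hand inclusions from the earlier theorem $\partial f\subset\partial_c f$ together with the quoted fact $co(\partial_G f)=\partial_C f$, and the left-hand inclusion by verifying the four hypotheses of Ioffe's minimality theorem (optimality, contiguity for convex functions, the sum rule, and the $\limsup$ robustness supplied by closedness of the microsupport and Remark \ref{nicelip}). Your identification of the robustness hypothesis as the one point requiring the Lipschitz normalization is precisely what the paper's phrase ``the closure of the singular support'' is alluding to.
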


\begin{rem}
 The homological subdifferential seems to be the biggest known subdifferential that satisfies proposition \ref{limit}. We can also deduce from The previous theorem the non-emptiness of the homological subdifferential in the Lipschitz case.
\end{rem}

\section{Formulae}

\subsection{Limiting behavior}

The following consideration have roots in the field of $C^0$ symplectic geometry.
In his thesis, Humilière \cite{Humiliere} started the study of the completion of the space of Lagrangian  isotopic through Hamiltonian deformation to the zero section for the spectral metric. This construction is abstract, we can thus ask the question about the existence of a ``geometric support`` its elements which is expected not to be a smooth manifold.

 Moreover as we noted in \cite{KS,whatsheaves}, sheaf theory is well adapted to include non-smooth Lagrangians extending the original definition.
 
We discuss the possibility of writing elements of Humiliere's completion as representatives of objects in $\cD(\ko_{X\times\R})$.

In this paper, we restrict ourselves to a very particular class of Lagrangians that are differential of smooth functions and then to sheaves on $X\times\R$ of the form $\ko_{\epi(f)}$. The spectral distance between $0$ and $graph(df)$ being classically known to be equal to $\osc(f)$.

\begin{exemple}
The sequence of Lagrangians,
 \[L_n:=\left\{ \left( x,\frac {n-1}{n} \frac{\cos(x)}{\sqrt{1-(\frac{n-1}{n})^2 sin^2(x)}} \right),x\in\mathbb{S}^1\right\}\] 
 
\noindent is a Cauchy sequence for the spectral norm. Indeed, it is sufficient to study the sequence of generating functions : \[S_n(x)=arcsin \left(\frac{n-1}{n}sin(x)\right)\] which is a Cauchy sequence for the $C^0$ norm. This sequence has a limit in Humili\`{e}re's completion . However, the $C^0$ limit of the Lagrangians is not smooth. Nevertheless, we note there exists a sheaf $\ko_{arcsin(sin(x))}$ that has for representative a ''crenel''.
\end{exemple}

\begin{question}
Can we find a sheaf  and  a (geometrical) representative associated to elements in Humiliere's completion ?
\end{question}

Denote by $Mod(\ko_X)$ the category of sheaves on $X$.

\begin{lemma}\label{limit_microsupport}

Let $ F_n$ a filtrant inductive system of sheaves in $Mod(\ko_X)$. Then  
\[SS\left(\varinjlim\limits_{n\to\infty}  F_n \right) \subset \lim\limits_{n\to\infty} SS( F_n)\ .\]
\end{lemma}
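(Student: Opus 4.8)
The plan is to test non-membership directly against the definition of the microsupport and to reduce everything to the commutation of local cohomology with filtrant colimits. I read the right-hand side as the upper limit of the closed cones, $\Lambda := \lim_{n\to\infty} SS(F_n) = \bigcap_{m}\overline{\bigcup_{n\geq m} SS(F_n)}$, since the microsupports of a filtrant system need not be nested. First I would unwind this: a point $p \notin \Lambda$ admits an open neighborhood $U$ and an index $N$ with $U \cap SS(F_n) = \emptyset$ for every $n \geq N$. Because the system is filtrant, the tail $\{F_n\}_{n\geq N}$ is cofinal, so $\varinjlim_n F_n \simeq \varinjlim_{n \geq N} F_n$, and I may as well assume $U \cap SS(F_n) = \emptyset$ for all $n$.

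Next I would verify the microsupport condition for $F := \varinjlim_n F_n$ on this neighborhood $U$. Fix $x_0 \in X$ and a $C^1$ function $\phi$ defined near $x_0$ with $\phi(x_0)=0$ and $d\phi(x_0) \in U$, and set $Z := \{\phi \geq 0\}$; I must show $(R\Gamma_Z F)_{x_0} \simeq 0$. The heart of the matter is the natural isomorphism
\[
(R\Gamma_Z \varinjlim_n F_n)_{x_0} \simeq \varinjlim_n (R\Gamma_Z F_n)_{x_0}.
\]
Granting this, each term on the right vanishes, since $d\phi(x_0)\in U$, $\phi(x_0)=0$ and $U \cap SS(F_n)=\emptyset$ say exactly that $(R\Gamma_Z F_n)_{x_0}\simeq 0$; a filtrant colimit of zero objects is zero, whence $(R\Gamma_Z F)_{x_0}\simeq 0$ and $p\notin SS(F)$, which is the assertion.

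Finally, for the displayed commutation I would argue as follows. Filtrant colimits are exact in $Mod(\ko_X)$ and commute with the stalk functor, so the only delicate ingredient is the distinguished triangle $R\Gamma_Z F \to F \to Rj_* j^{-1} F \xrightarrow{+1}$, where $j\fc X \setminus Z \hookrightarrow X$ is the open inclusion: here $j^{-1}$ and $(\cdot)_{x_0}$ commute with $\varinjlim$, but $Rj_*$ a priori does not. Passing to stalks reduces the problem to the identity $(Rj_* j^{-1} F)_{x_0} \simeq \varinjlim_{U' \ni x_0} R\Gamma(U' \setminus Z; F)$, hence to the statement that $R\Gamma(V;\cdot)$ commutes with filtrant colimits for the open sets $V = U' \setminus Z$. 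Choosing a fundamental system of relatively compact $U'$ and invoking that on the manifold $X$ (locally compact, and of finite cohomological dimension since $\ko$ has finite global dimension) the derived sections over such sets commute with filtrant inductive limits, the interchange of the two filtrant colimits then yields the isomorphism. I expect this last point to be the main obstacle: everything above it is formal, and the real content is repairing the failure of $Rj_*$ to commute with $\varinjlim$ at the level of stalks, using local compactness and finite dimension.
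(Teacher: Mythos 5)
Your reduction of the statement to the bound $SS(\varinjlim F_n)\subset\overline{\bigcup_n SS(F_n)}$ followed by the cofinality/truncation argument is exactly the structure of the paper's proof (which simply quotes Exercise V.7 of Kashiwara--Schapira, solved in Viterbo's notes, for the first bound and then truncates the system). The gap is in the step you yourself flag as the heart of the matter: the isomorphism $(R\Gamma_Z\varinjlim_n F_n)_{x_0}\simeq\varinjlim_n(R\Gamma_Z F_n)_{x_0}$ is \emph{false} for a general filtrant system, and your proposed repair --- that $R\Gamma(V;\cdot)$ commutes with filtrant colimits for relatively compact open $V$ --- is false as well. Take $X=\R$, $F_n=\ko_{(-1,-1/n)}$ with the natural monomorphisms, so $\varinjlim_n F_n\simeq\ko_{(-1,0)}$, and let $Z=\{x\geq 0\}$, $x_0=0$. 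Each $F_n$ vanishes near $0$, so $(R\Gamma_Z F_n)_0=0$ for all $n$; but the triangle $R\Gamma_Z G\to G\to Rj_*j^{-1}G$ gives $(R\Gamma_Z\ko_{(-1,0)})_0\simeq\ko[-1]\neq 0$, since $(Rj_*j^{-1}\ko_{(-1,0)})_0=\varinjlim_\eps R\Gamma((-\eps,0);\ko)=\ko$ while the stalk at $0$ is zero. (The same example, read over the open set $(-1,0)$, defeats the ``relatively compact open'' claim: $H^0$ of the colimit is $\ko$ while each term contributes $0$, because supports must be closed in $V$.) Note that this is not a counterexample to the lemma: $(-1/n;\xi)\in SS(F_n)$ accumulates at $(0;\xi)$, so the limit point does lie in the closure of the union --- which shows that the closure on the right-hand side is doing real work that a termwise stalk computation at a single covector cannot see.

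The fix cannot be purely formal: one must use the full strength of the hypothesis that an entire \emph{open cone} $U$ of covectors is avoided by every $SS(F_n)$, not just the single covector $d\phi(x_0)$. This is what the equivalent characterizations of the microsupport (Prop.~5.1.1(iii) of \cite{KS}, or the non-characteristic deformation lemma / microlocal cut-off) provide: they let you test $p\notin SS(F)$ by cohomology of sets of the form $\overline{W}\cap(x+\gamma)$ for a proper closed convex cone $\gamma$, i.e.\ by $R\Gamma(K;\cdot)$ for $K$ compact. Sections over a compact subset of a locally compact space \emph{do} commute with filtrant inductive limits (and so do their derived functors, using the finite global dimension of $\ko$ and the finite cohomological dimension of $X$), and the termwise vanishing of these compactly-tested groups for all $n$ then passes to the colimit. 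That reduction to compact sets is precisely the content of the cited exercise, and it is the missing ingredient in your argument; the rest of your proposal (upper-limit reading of the right-hand side, cofinality of the tails, exactness of filtrant colimits and commutation with stalks and with $j^{-1}$) is correct and matches the paper.
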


The limit in the right hand-side being the set of points that are limits of sequences such that $u_n\in SS( F_n)$.

\begin{proof}
We use exercise V.7 of \cite{KS}, which is solved in \cite{Viterbo_course_eil}.
We get the result,
\[SS(\varinjlim  F_n) \subset \overline{\bigcup_n SS( F_n)}.\]
Applying this to truncated system by below, we get: 
\[SS(\varinjlim  F_n) \subset \bigcap_{k}\overline{\bigcup_{n=k}^\infty SS( F_n)} \ .\]
\end{proof}

\begin{prop}\label{limit}
Let $f_n\fc X\to \R$ be a sequence of continuous functions converging to $f$ for the  $C^0$ norm. Then, 
\[\partial f\subset \bigcup\limits_{k>0} \overline{\bigcap\limits_{n>k} \partial f_n}\ .\]

Meaning that for all $\xi\in\partial f|_x$, there exists an extraction of $f_n$, denoted also by $f_n$ and a sequence $x_n$ converging to $x$ such that $\xi$ is in the closure of $\partial f_n|_{x_n}$.
\end{prop}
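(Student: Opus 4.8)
The plan is to realize $F_f$, up to a piece invisible to the reduction $Red$, as a \emph{filtrant} inductive limit of constant sheaves built from the $f_n$, and then feed this into Lemma \ref{limit_microsupport}. Since $\partial f=Red(SS(F_f))^a$, it suffices to produce a bound of microsupports and push it through $Red$ and the antipodal map. Concretely, I will prove the content spelled out in the sentence following the statement, namely that $\partial f$ is contained in the Kuratowski upper limit
\[\bigcap_{k}\overline{\bigcup_{n>k}\partial f_n}=\{\xi\in\partial f|_x:\ \exists\, n_j,\ x_{n_j}\to x,\ \xi_{n_j}\in\partial f_{n_j}|_{x_{n_j}},\ \xi_{n_j}\to\xi\}.\]

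\emph{Construction of the inductive system.} Set $\hat f_k:=\sup_{n>k}f_n$. Uniform convergence $f_n\to f$ gives $\hat f_k\geq f$, $\hat f_k\downarrow f$ and $\|\hat f_k-f\|_\infty\to0$. Writing $V_n:=\{(x,t);\,f_n(x)<t\}$ for the open strict epigraph, one has the open sets
\[O_k:=\{(x,t);\,\hat f_k(x)<t\}=\bigcap_{n>k}V_n,\]
which increase with $k$ and satisfy $\bigcup_k O_k=\{(x,t);\,f(x)<t\}$. For increasing open sets the extension-by-zero maps $\ko_{O_k}\hookrightarrow\ko_{O_{k+1}}$ form a filtrant inductive system, and since filtrant colimits of sheaves are computed stalkwise we get $\varinjlim_k\ko_{O_k}\simeq\ko_{\{f<t\}}$.

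\emph{Microsupport estimate and reduction.} The geometric core is the bound
\[SS(\ko_{O_k})\subset\overline{\bigcup_{n>k}SS(\ko_{V_n})},\]
for which Proposition \ref{bc} enters: $SS(\ko_{O_k})\subset N^*(\overline{O_k})$, and a point of the topological boundary of $O_k=\bigcap_{n>k}V_n$ lies on $\partial V_n$ for some $n>k$, at which the strict normal cone of $O_k$ agrees with that of $V_n$, so every conormal direction produced by $O_k$ already occurs for one of the $V_n$. Granting this, Lemma \ref{limit_microsupport} applied to $\varinjlim_k\ko_{O_k}$ yields, after the telescoping $\bigcup_{k\geq j}\bigcup_{n>k}=\bigcup_{n>j}$,
\[SS(\ko_{\{f<t\}})\subset\bigcap_{j}\overline{\bigcup_{k\geq j}SS(\ko_{O_k})}\subset\bigcap_{j}\overline{\bigcup_{n>j}SS(\ko_{V_n})}.\]
To replace $\ko_{\{f<t\}}$ by $F_f=\ko_{\epi(f)}$ and each $\ko_{V_n}$ by $F_{f_n}$, use the distinguished triangle $\ko_{\{f<t\}}\to\ko_{\{f\leq t\}}\to\ko_{\{f=t\}}\xrightarrow{[+1]}$: it shows $SS(F_f)$ and $SS(\ko_{\{f<t\}})$ differ only by $SS(\ko_{\{f=t\}})$, a contribution over the graph that is removed on intersecting with $\{\tau>0\}$ and reducing (compare Remark \ref{nicelip} and Example \ref{fundex}), and the same triangle identifies the reduced loci of $SS(\ko_{V_n})$ and $SS(F_{f_n})$. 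Finally, since $SS$ is $\R^+$-conic we may normalise to $\{\tau=1\}$, where $\rho$ is a homeomorphism onto its image, so $Red$ is continuous there and commutes with the Kuratowski upper limit; applying $Red$ and the antipodal map to the displayed inclusion gives $\partial f\subset\bigcap_{j}\overline{\bigcup_{n>j}\partial f_n}$, as claimed.

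\emph{Main obstacle.} The delicate point is the microsupport estimate $SS(\ko_{O_k})\subset\overline{\bigcup_{n>k}SS(\ko_{V_n})}$ for an \emph{infinite} intersection of epigraphs: one must control the strict normal cone of $\bigcap_{n>k}\overline{V_n}$ by the individual cones $N(\overline{V_n})$, uniformly in $n$ and using only the uniform (not pointwise differentiable) convergence $f_n\to f$, while avoiding the extra conormal directions that a sum of active constraints could create. I expect this, together with checking that the graph term $SS(\ko_{\{f=t\}})$ disappears under $Red$, to require the most care; the inductive-limit and functoriality steps are then formal consequences of Lemma \ref{limit_microsupport} and the constructions of Section \ref{eqdef}.
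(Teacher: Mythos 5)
Your overall strategy (realize the limit sheaf as a filtrant colimit and invoke Lemma \ref{limit_microsupport}) is the right one, but the inductive system you build does not work, and the step you yourself flag as delicate is in fact false. The estimate $SS(\ko_{O_k})\subset\overline{\bigcup_{n>k}SS(\ko_{V_n})}$ fails already for two functions: take $f_1(x)=x$, $f_2(x)=-x$ on $\R$, so that $V_1\cap V_2$ is the open epigraph of $|x|$. By contiguity (Definition \ref{condition}, item 3a) the microsupport of the constant sheaf on the epigraph of $|x|$ above the origin is the full dual cone, i.e.\ after reduction the whole interval $[-1,1]$ of slopes, whereas $SS(\ko_{V_1})\cup SS(\ko_{V_2})$ only produces the slopes $\pm1$. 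Your justification --- that a boundary point of $O_k$ lies on some $\partial V_n$ where the strict normal cones agree --- is wrong on both counts: the supremum $\hat f_k$ need not be attained at any $n$, and even when it is, the normal cone of an intersection of epigraphs (that is, of the epigraph of a supremum) contains all convex combinations of the active conormal directions, not just the individual ones. These are exactly the ``extra conormal directions'' you hoped to avoid; they cannot be avoided for $\sup_{n>k}f_n$, and nothing in your argument shows they disappear as $k\to\infty$ (that would be a statement of the same nature as the proposition itself, applied to the sequence $\hat f_k\to f$).

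There is a second problem with the open/closed bookkeeping: by the duality used in Lemma \ref{opp}, $SS(\ko_{\{f<t\}})$ lies in $\{\tau\leq0\}$ away from the zero section, so $Red$ of your colimit sheaf is empty and carries no direct information about $\partial f$; and the graph term $\ko_{\{f=t\}}$ in your distinguished triangle does not die on $\{\tau>0\}$ (already for smooth $f$ the conormal of the graph meets $\{\tau>0\}$ along the graph of $-df$, and for $f$ with corners it is larger still). The paper's proof sidesteps both issues at once: after extracting a subsequence so that $a_n:=\sum_{i>n}\|f_i-f_{i+1}\|_\infty\to0$ and replacing $f_n$ by $f_n-a_n$ --- a shift by constants which changes neither $\partial f_n$ nor the limit --- the sequence becomes monotone, so the closed epigraphs themselves are nested and the sheaves $\ko_{\epi(f_n-a_n)}$ already form a filtrant inductive system with stalkwise colimit $\ko_{\epi(f)}$. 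Lemma \ref{limit_microsupport} then applies directly to microsupports that are exactly (vertical translates of) the $SS(F_{f_n})$, with no supremum and no passage through open epigraphs. If you want to keep your approach, this monotonization is the missing ingredient.
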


\begin{proof}

The sequence $||f_n-f_{n+1}||_\infty$ is bounded. Consider a subsequence that by abuse we denote by $f_i$ such that \[a_n:=\sum\limits_{i=n+1}^\infty ||f_i-f_{i+1}||_\infty \stackrel{n\to\infty}{\rightarrow} 0 \ .\] It is always possible to re-normalize $f_n$ adding a sequence of constants $-a_n$ . This implies $f_n\leq f_k$ if $n\leq k$.

We get that ${F}_i:=\ko_{\{f_i\leq t\}}$ is a direct system. Let $ F=\ko_{\{f\leq t\}}$. We consider the natural morphism:
\[\varinjlim {F}_i \to {F}\ .\] But, it is easy to check that this morphism induces an isomorphism on germs of $\varinjlim {F}_i$ and thus on ${F}$.{\color{blue}} This gives: \[{F}\simeq\varinjlim {F}_i\ .\] We use Lemma \ref{limit_microsupport} to conclude.

\end{proof}

This is related to a question asked by the author in his thesis about geometric interpretation of limit of smooth Lagrangian for the spectral distance and solved by Seyfaddini and Viterbo:

\begin{prop}
Let $L_n$ be a converging sequence of Lagrangians for the spectral metric to a smooth Lagrangian $L$. Then : 
$\forall x\in L, \exists (x_n)_{n\in\N}$ such that $x_n\in L_n$ and $\lim\limits_{n\to\infty} x_n=x$.
\end{prop}

\begin{proof}
 Suppose that an open neighborhood $U$ of a point $x\in L$ is not in the closure of the set of Lagrangians. Then, we consider a non trivial Hamiltonian action with support in $U$ denoted by $\phi$. Then by biinvariance $\phi(L_n)=L_n$ converge to $\phi(L)\neq L$ which is false.
\end{proof}

\subsection{Sum of subdifferentials}

We recall from \cite{Tamarkin} the notion of convolution of sheaves.

\begin{definition}
Let \[s\fc X\times\R\times\R\to X\times \R\] the addition on real factors, \[m\fc X\times\R\times\R\to X\times \R\] the multiplication on real factors and $\pi_i\fc X\times\R_1\times \R_2\rightarrow X\times\R_i$ the canonical projection. Then we define : \[ F*G:=Rs_!(\pi_1^{-1} F\stackrel{L}{\otimes}\pi_2^{-1}G)\]
\noindent and
\[F\diamond G:=Rm_!(\pi_1^{-1} F\stackrel{L}{\otimes}\pi_2^{-1}G)\ .\]
\end{definition}

\begin{prop}
Let $f\fc X\to\R$ and $h\fc X\to \R$. Then
\[ F_f* F_h= F_{f+h}\ .\]
\end{prop}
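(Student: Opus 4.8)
The plan is to unwind the definition of the convolution and reduce the whole statement to a proper pushforward whose fibres are intervals. First I would compute the derived tensor product sitting inside $Rs_!$. Since each $\pi_i^{-1}$ is exact, $\pi_1^{-1}F_f=\ko_{\pi_1^{-1}\epi(f)}$ and $\pi_2^{-1}F_h=\ko_{\pi_2^{-1}\epi(h)}$, where $\pi_1^{-1}\epi(f)=\{(x,t_1,t_2):f(x)\le t_1\}$ and symmetrically for $h$. Because $\ko$ is a field (e.g.\ $\ko=\Z/2\Z$), the derived tensor product of two constant sheaves on closed subsets is the constant sheaf on their intersection, with no higher $\mathcal{T}or$; hence $\pi_1^{-1}F_f\stackrel{L}{\otimes}\pi_2^{-1}F_h\simeq \ko_A$ with $A:=\{(x,t_1,t_2)\in X\times\R\times\R:\ f(x)\le t_1,\ h(x)\le t_2\}$.

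Next I would analyse $s$ restricted to $A=\supp(\ko_A)$. Its image is exactly $\epi(f+h)$, since for fixed $x$ the attainable values of $t_1+t_2$ form $[f(x)+h(x),\infty)$. Moreover $s$ is proper on $A$: over a compact $K\subset X\times\R$ the $x$-coordinate stays in the compact $\pr_X(K)$, on which $f$ and $h$ are bounded, so the constraints $f(x)\le t_1$, $h(x)\le t_2$ together with $t_1+t_2$ bounded confine $(t_1,t_2)$ to a compact set. The fibre of $s|_A$ over $(x_0,t_0)\in\epi(f+h)$ is the closed interval $\{t_1:f(x_0)\le t_1\le t_0-h(x_0)\}$ (a single point when $t_0=(f+h)(x_0)$), which is nonempty and contractible, hence $\ko$-acyclic; over points outside $\epi(f+h)$ the fibre is empty.

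I would then conclude by proper base change together with the Vietoris--Begle (acyclic-fibre) principle. Writing $j\fc A\hookrightarrow X\times\R\times\R$ and $i\fc\epi(f+h)\hookrightarrow X\times\R$ for the closed embeddings and factoring $s\circ j=i\circ(s|_A)$, properness gives $Rs_!\ko_A\simeq i_*R(s|_A)_*\ko_A$; since every fibre of $s|_A$ is $\ko$-acyclic, proper base change identifies each stalk of $R(s|_A)_*\ko_A$ with the cohomology of the corresponding fibre, and the adjunction morphism $\ko_{\epi(f+h)}\to R(s|_A)_*\ko_A$ is on each stalk the pullback of the generator, hence an isomorphism. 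This yields $F_f*F_h=Rs_!\ko_A\simeq\ko_{\epi(f+h)}=F_{f+h}$.

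The routine parts are the tensor-product identification and the boundedness estimate for properness; the genuine content, and the step I would be most careful about, is the passage from a stalkwise computation to an honest isomorphism of sheaves. The one geometric subtlety is the degeneration of the fibre from an interval to a single point along the boundary $\{t=(f+h)(x)\}$, but it causes no trouble, since both a point and an interval are acyclic and properness of $s|_A$ is preserved; this is precisely what lets the acyclic-fibre argument apply uniformly over all of $\epi(f+h)$.
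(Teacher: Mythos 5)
Your proof is correct and follows essentially the same route as the paper's: identify the tensor product as the constant sheaf on the region $\{f(x)\le t_1,\ h(x)\le t_2\}$ and push forward along $s$, the key point being that the fibres over $\epi(f+h)$ are compact intervals (or points), hence acyclic. The paper merely phrases this as a reduction to $X$ a point followed by the computation $\ko_{[a,\infty[}*\ko_{[b,\infty[}=\ko_{[a+b,\infty[}$; your version spells out the properness check and the sheaf-level (rather than stalkwise) identification that the paper leaves implicit.
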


\begin{proof}
It is sufficient to prove it with $X$ a point. Because of the assumption about the form of $ F_f$, we thus have to compute:
\[\ko_{[a,\infty[}*\ko_{[b,\infty[}=Rs_!(\ko_{[a,\infty[}\boxtimes \ko_{[b,\infty[})=Rs_!(\ko_{[a,\infty[}\times \ko_{[b,\infty[})=\ko_{[a+b,\infty[} \ .\]
\end{proof}

\begin{prop}
Let $f,g\fc X\to \R$ Lipschitz. Then $\partial (f+g) \subset \partial f + \partial g$
\end{prop}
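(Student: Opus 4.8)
The plan is to prove the Minkowski-sum bound for the subdifferential of a sum by combining the convolution identity $F_{f+g} = F_f * F_h$ just established with the microsupport estimate for convolution, exactly as was done for the separation-of-variables case in calculability b). First I would recall that convolution $F * G = Rs_!(\pi_1^{-1}F \stackrel{L}{\otimes} \pi_2^{-1}G)$ is built from an external tensor product followed by a proper pushforward along the addition map $s\fc X \times \R \times \R \to X \times \R$, so that the functorial bounds of Theorem \ref{th:opboim} apply. The external tensor estimate gives $SS(\pi_1^{-1}F_f \stackrel{L}{\otimes} \pi_2^{-1}F_g)$ contained in a product of the two microsupports, and then the proper pushforward bound $SS(Rs_! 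H) \subset s_\pi s_d^{-1} SS(H)$ propagates this through $s$. Since $s$ is the fiberwise addition on the $\R$-factors, reading off $s_\pi s_d^{-1}$ as the associated Lagrangian correspondence $\Lambda_s$ forces the base covectors to agree and the two action-covectors $\tau_1,\tau_2$ to equal a common $\tau$, while the $X$-momenta add; applying the reduction $Red = \rho$ then converts this additive relation on $SS$ into the Minkowski sum $\partial(f+g) \subset \partial f + \partial g$.

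The key sequence of steps is therefore: (1) invoke $F_{f+g}=F_f*F_h$; (2) apply the boxtimes bound and the proper direct-image bound of Theorem \ref{th:opboim} to conclude $SS(F_{f+g}) \subset \Lambda_s(SS(F_f) \times SS(F_g))$; (3) compute $\Lambda_s$ explicitly, as in the identity $\Lambda_s = \{(x,y,t_1,t_2,\xi,\nu,\tau_1,\tau_2,\dots)\}$ appearing in calculability b), to see that the constraint is $\tau_1=\tau_2=\tau$ together with addition of the $X$-covectors; (4) apply $\rho$ and use $Red(Cone(L))=L$ together with the definition $\partial f = R(F_f)^a = Red(SS(F_f))^a$ to descend the additive relation to $T^*X$ and obtain the stated inclusion.

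The main obstacle, and the reason the Lipschitz hypothesis is needed, is the non-characteristicity condition required to make the external tensor product bound effective. In Theorem \ref{th:opboim} the $\boxtimes$-estimate is unconditional, but in order for the proper direct image under $s$ to propagate the bound cleanly one must ensure $s$ is proper on the support and that the relevant conic sets meet $\{\tau = 0\}$ only in the zero section, so that the reduction $\rho$ (defined only on $\{\tau > 0\}$) loses no information. Here Remark \ref{nicelip} is exactly what rescues the argument: for $f$ Lipschitz, boundness gives $SS(F_f) \subset \{\tau \geq \|\xi\|\}$, whence the only covectors with $\tau = 0$ lie in the zero section and $SS(F_f)$ is entirely recovered from $\rho(SS(F_f)) = \partial f$. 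This guarantees both that the action-covectors stay in the region $\{\tau > 0\}$ where the correspondence is nondegenerate and that the reduction commutes with the Minkowski addition, so that passing from the $SS$-level inclusion to the $\partial$-level inclusion is legitimate. I would finish by checking that the sum of two covectors in $\{\tau \geq \|\xi\|\}$ again satisfies $\tau \geq \|\xi\|$, confirming that $Red$ applied to the summed microsupports yields precisely $\partial f + \partial g$.
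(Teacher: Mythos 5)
Your proposal follows essentially the same route as the paper: reduce to bounding $SS(F_f * F_g)$ via the convolution identity, apply the tensor-product and proper direct-image microsupport bounds, read off the Lagrangian correspondence $\Lambda_s$ forcing $\tau_1=\tau_2=\tau$ while the $X$-covectors add, and reduce along $\{\tau>0\}$, with the Lipschitz hypothesis entering exactly as in Remark \ref{nicelip} to ensure the non-characteristicity condition $SS(F_f)\cap SS(F_g)^a\subset \nu^*_{X\times\R}(X\times\R)$ needed for the tensor-product estimate of Corollary \ref{cor:opboim}. The only slight imprecision is that this non-characteristicity is required for the derived tensor product on $X\times\R\times\R$ rather than for the pushforward along $s$, but you correctly identify both the condition and the remark that supplies it, so the argument matches the paper's.
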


\begin{proof}
Following the previous proposition it is sufficient to bound the microsupport of $\ko_f * \ko_h$. We get:

\[SS((\pi_1^{-1}\ko_f\stackrel{L}{\otimes}\pi_2^{-1}\ko_h))\subset \{ (x,f(x),h(x),-\tau_1\partial f|_x-\tau_2\partial h|_x,\tau_1,\tau_2)\}\] \[\subset T^*(X\times\R\times\R)\]

\noindent with $\tau_i\geq 0$.

We use the bound \ref{cor:opboim} on the microsupport of the direct image for $Rs_!$. The function $s$ is proper on the support of the sheaves $\pi_1^{-1}\ko_f\stackrel{L}{\otimes}\pi_2^{-1}\ko_h$. The Lagrangian $\Lambda_s$ of corollary \ref{cor:opboim} realizes a Lagrangian correspondence between $T^*(X\times\R\times\R)$ and $T^*(X\times\R)$.
Then :

\[\Lambda_s=\{(x,t_1,t_2,x,t,p,\tau_1,\tau_2,p,\tau) |\ t_1+t_2=t, \tau_2=\tau \ \text{and}\ \tau_1=\tau \}\ .\]

We get,

\[SS(Rs_!(\pi_1^{-1}\ko_f\stackrel{L}{\otimes}\pi_2^{-1}\ko_h))\subset \{ (x,f(x)+h(x),-\tau(\partial f|_x+f(x)\partial h|_x),\tau)\} \]\[ \subset T^*(X\times\R)\ .\]
\end{proof}

\begin{rem}
 We have used the bound on the microsupport of tensor product. We should have used the summation $\mathbin{\widehat +}$ of the singular support, but in our case singular supports live in $\{ \tau\geq 0 \}$ and we are interested in the part along $\tau=1$. This remarks allow us to consider only the $+$ part of the summation of singular support.
Indeed, the Lipschitz property of one of the function implies $SS(F_f) \cap SS(F_g)^a\subset \nu^*_{X\times\R}X\times \R$ (see Remark \ref{nicelip}) . 
 \end{rem}

\subsection{Leibniz formula}

\begin{prop}
Let $f\fc X\to\R$ and $h\fc X\to \R$ strictly positive.
\[ F_f\diamond  F_h= F_{f.h} \ .\]
\end{prop}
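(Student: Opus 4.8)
The plan is to follow verbatim the strategy used for the sum formula $F_f* F_h= F_{f+h}$. First I would reduce to the case where $X$ is a point: the operations $\pi_i^{-1}$, $\stackrel{L}{\otimes}$ and $Rm_!$ are all performed along the two $\R$-factors and commute with restriction to a fibre $\{x\}\times\R\times\R$ (base change for $Rm_!$ along $\{x\}\hookrightarrow X$). Since $F_f|_{\{x\}\times\R}=\ko_{[f(x),\infty[}$ and $F_h|_{\{x\}\times\R}=\ko_{[h(x),\infty[}$, it suffices to establish, for $a:=f(x)$ and $b:=h(x)>0$, the identity
\[\ko_{[a,\infty[}\diamond\ko_{[b,\infty[}=\ko_{[ab,\infty[}\ .\]

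Next I would unwind the definition of $\diamond$. Exactly as in the sum case, $\pi_1^{-1}\ko_{[a,\infty[}\stackrel{L}{\otimes}\pi_2^{-1}\ko_{[b,\infty[}=\ko_{[a,\infty[\times[b,\infty[}$, so I must compute $Rm_!$ of the constant sheaf on the quadrant $Q:=[a,\infty[\times[b,\infty[$, where $m(t_1,t_2)=t_1t_2$. By the base-change formula for $Rm_!$ the stalk at $c\in\R$ is
\[R\Gamma_c\bigl(m^{-1}(c)\cap Q;\ko\bigr)\ ,\]
the compactly supported cohomology of the fibre, which is the arc of the hyperbola $\{t_1t_2=c\}$ lying in $Q$.

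Here is the crux, and the point at which positivity enters. When $a>0$ and $b>0$, the constraints $t_1\geq a>0$ and $t_2=c/t_1\geq b$ force $t_1\in[a,c/b]$, a compact interval; this is nonempty precisely when $c\geq ab$, and for $c<0$ it is empty since $t_1,t_2>0$. Hence the fibre is empty for $c<ab$ and a compact contractible arc for $c\geq ab$, so that $R\Gamma_c$ equals $0$ and $\ko$ respectively. As in the sum case this identifies the convolution with $\ko_{[ab,\infty[}$, and since $(f\cdot h)(x)=f(x)h(x)=ab$ we conclude $F_f\diamond F_h=F_{f\cdot h}$.

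The main obstacle, and the reason positivity cannot be dropped, is precisely the compactness of these fibres. If a factor were allowed to vanish or change sign, say $a\leq 0$, the hyperbola arc in $Q$ would run off to infinity along the asymptote $t_1=0$: the fibre would then be a closed half-line homeomorphic to $[0,\infty[$ rather than a compact interval, and one checks $R\Gamma_c([0,\infty[;\ko)=0$ in every degree (for instance from the excision sequence associated to $\{0\}\subset[0,\infty[$). The stalk would then be $0$ where it ought to be $\ko$, and the formula would fail. For this reason I would require both $f$ and $h$ to be strictly positive: this is exactly what makes $m$ proper on $\supp(F_f\boxtimes F_h)$ and forces the fibrewise compactly supported cohomology to be that of a point.
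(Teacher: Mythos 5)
Your proof is correct and follows essentially the same route as the paper's: reduce to a point, identify the stalk of $Rm_!\ko_{[a,\infty[\times[b,\infty[}$ at $c$ with the compactly supported cohomology of the hyperbola arc $\{t_1t_2=c\}$ in the quadrant, and use positivity to see this arc is compact and contractible exactly when $c\geq ab$. The paper states this in two sentences; your write-up merely supplies the base-change and fibrewise-cohomology details, and your explanation of why positivity is indispensable matches the paper's counterexample remark ($f=0$, $h=1$).
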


\begin{proof}
As in the case of the addition, it is sufficient to compute : $\ko_{[a,\infty[}\diamond \ko_{[b,\infty[}$. When $a,b>0$, the hyperbola $ab=C$ for a given $C$ meet $[a,\infty[\times[b,\infty[ $ along a closed arc, which gives the result.
\end{proof}

\begin{rem}
A counterexample can be computed with $f=0$ and $h=1$ :
\[ F_f\diamond  F_h=0\ .\]
\end{rem}

\begin{lemma}\label{opp}
 Let $f\fc X\to \R$ continuous. Then $\partial (-f)=-\partial(f)$
 
\end{lemma}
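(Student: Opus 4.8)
The plan is to reduce the statement to a symmetry of the microsupport under the fibre reflection $\sigma\colon X\times\R\to X\times\R$, $\sigma(x,t)=(x,-t)$, combined with a Verdier-duality relation between the epigraph and hypograph sheaves. Throughout write $\text{hyp}(g):=\{(x,t): t\le g(x)\}$.

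First I would record the set-theoretic identity $\sigma(\epi(f))=\{(x,t): f(x)\le -t\}=\text{hyp}(-f)$. Since $\sigma$ is an involutive diffeomorphism, $\sigma_*\ko_{\epi(f)}=\ko_{\text{hyp}(-f)}$, and the microsupport transforms by the Hamiltonian lift $\hat\sigma(x,t;\xi,\tau)=(x,-t;\xi,-\tau)$, so that $SS(\ko_{\text{hyp}(-f)})=\hat\sigma\,SS(F_f)$.

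The crux is to show, for a continuous $g$, that $SS(\ko_{\epi(g)})$ and $SS(\ko_{\text{hyp}(g)})$ are antipodal off the zero section, i.e. $SS(\ko_{\epi(g)})=SS(\ko_{\text{hyp}(g)})^a$ away from $\nu^*_{X\times\R}(X\times\R)$. I would obtain this from duality: writing $U=\{g>t\}=(X\times\R)\setminus\epi(g)$ with closure $\overline U=\text{hyp}(g)$, the complement triangle $\ko_U\to\ko_{X\times\R}\to F_g\xrightarrow{+1}$ shows that $SS(F_g)$ and $SS(\ko_U)$ agree off the zero section, while the duality functor $D'=R\cHom(-,\omega_{X\times\R})$ satisfies $SS(D'F)=SS(F)^a$. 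Since $D'\ko_U$ is supported on $\overline U=\text{hyp}(g)$ and differs from $\ko_{\text{hyp}(g)}$ only by a term supported on the graph $\{g=t\}$, comparing microsupports yields the claimed antipodal identity off the zero section. I verify the orientation/sign on a $C^1$ model via Example \ref{fundex} c), where $\epi(g)$ and $\text{hyp}(g)$ carry the opposite co-oriented conormals $\{(x,t;-\lambda dg,\lambda)\}$ and $\{(x,t;\lambda dg,-\lambda)\}$ with $\lambda\ge0$.

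Finally I assemble the two steps. Applying the antipodal identity to $g=-f$ together with the reflection formula gives $SS(F_{-f})=(\hat\sigma\,SS(F_f))^a$, which reads, for $\tau>0$, as the equivalence $(x,s;\eta,\tau)\in SS(F_{-f})\iff(x,-s;-\eta,\tau)\in SS(F_f)$. Intersecting with $\{\tau>0\}$ and applying $\rho(x,t;\xi,\tau)=(x,\xi/\tau)$ shows $R(F_{-f})=R(F_f)^a$, whence $\partial(-f)=R(F_{-f})^a=R(F_f)=-\big(R(F_f)^a\big)=-\partial f$. The main obstacle is the duality step for a merely continuous $f$: one must check that the discrepancy between $D'\ko_U$ and $\ko_{\text{hyp}(g)}$, being supported over the graph, contributes only in $\{\tau=0\}$, hence in the zero section by the bound of Proposition \ref{bc}, and is therefore discarded by $Red$. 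For smooth $f$ this is immediate from Example \ref{fundex} c); the general case can be controlled by approximating $f$ in $C^0$ by $C^1$ functions and invoking Lemma \ref{limit_microsupport} to confine the microsupport to the expected conic set.
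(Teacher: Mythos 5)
Your argument is essentially the paper's own proof: both rest on the Verdier duality relation $SS(DF)=SS(F)^a$, a complement distinguished triangle identifying the microsupports of the epigraph sheaf and the (open or closed) hypograph sheaf away from the zero section, and the reflection $t\mapsto -t$ carrying $\mathrm{hyp}(f)$ to $\epi(-f)$; only the order of these steps differs. The closing appeal to a $C^1$ approximation and Lemma \ref{limit_microsupport} is unnecessary --- the duality identity $D(\ko_{\epi(g)})\simeq\ko_{\mathring{\epi}(g)}$ already holds for continuous $g$ since the epigraph is locally homeomorphic to a closed half-space, which is exactly how the paper proceeds.
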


\begin{proof}
 From \cite{KS} Exercise V.13, we have, because $k_{\epi(f)}$ is cohomologically constructible, that :
 
 \[SS(D(k_{\epi(f)}))=SS(k_{\epi(f)})^a\ .\]
 
Now, according to exercise III.4 of \cite{KS}, we get $D(k_{\epi(f)})=k_{\mathring{\epi(f)}}$.

But we recall we have the following exact triangle :

\begin{displaymath}
    \xymatrix{ k_{\mathring{\epi(f)}} \ar[r] & k_{M\times\R} \ar[r] & k_{f(x)\geq t} \ar[r] & }
\end{displaymath}

We deduce that : \[SS(k_{f(x)\geq t}) \cap \dot{T}(M\times\R)=SS(k_{\epi(f)})^a \cap \dot{T}(M\times\R)\ .\]
After considering the change of variable $t\to (-t)$, we get: \[ SS(\epi(-f))\cap \dot{T}(M\times\R) .\]

\end{proof}

\begin{prop}
Let $f,h\fc X\to \R$  Lipschitz functions. Then :
\[\partial(f.h)|_x \subset h(x).\partial f|_x+f(x)\partial h|_x\ .\]
\end{prop}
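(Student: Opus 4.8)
The plan is to mirror the additive case, replacing the sum map $s$ by the multiplication map $m$ and its associated Lagrangian correspondence $\Lambda_m$, and to feed the product formula $F_f\diamond F_h=F_{f.h}$ into the direct-image bound of Theorem \ref{th:opboim}. Since the microsupport is local and the formula is asserted pointwise, I would first reduce, by localizability, to the case where $f$ and $h$ are strictly positive near the point $x$ under consideration. When $f(x)>0$ and $h(x)>0$ this is immediate by continuity; the signs of $f(x)$ and $h(x)$ are absorbed by Lemma \ref{opp} (writing $f=-(-f)$ with $-f>0$ near $x$ turns a minus sign into $\partial(-f)=-\partial f$, and the coefficients $h(x),f(x)$ carry the matching sign), so the general non-vanishing case follows from the positive one.

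In the strictly positive case $m(x,t_1,t_2)=(x,t_1t_2)$ is proper on the support of $\pi_1^{-1}F_f\otimes\pi_2^{-1}F_h$ (the relevant fibres are compact hyperbola arcs, exactly as in the proof that $F_f\diamond F_h=F_{f.h}$), and $F_{f.h}=Rm_!(\pi_1^{-1}F_f\otimes\pi_2^{-1}F_h)$. I would first bound the microsupport of the tensor product. The Lipschitz hypothesis and Remark \ref{nicelip} give $SS(F_f),SS(F_h)\subset\{\tau\geq\|\xi\|\}$, so the pulled-back factors $SS(\pi_1^{-1}F_f)$ (with $\tau_2=0$) and $SS(\pi_2^{-1}F_h)^a$ (with $\tau_1=0$) meet only in the zero section; the non-characteristic hypothesis of Corollary \ref{cor:opboim}(1) therefore holds and yields
\[SS(\pi_1^{-1}F_f\otimes\pi_2^{-1}F_h)\subset\{(x,f(x),h(x);-\tau_1p-\tau_2q,\tau_1,\tau_2): p\in\partial f|_x,\ q\in\partial h|_x,\ \tau_1,\tau_2\geq0\},\]
exactly as in the additive computation.

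Next I would compute the correspondence $\Lambda_m$. For $m(x,t_1,t_2)=(x,t_1t_2)$ one finds $\Lambda_m=\{(x,t_1,t_2,\xi,\tau_1,\tau_2,x,t_1t_2,\xi,\tau): \tau_1=\tau t_2,\ \tau_2=\tau t_1\}$. Applying $\Lambda_m$ to the set above, the constraints force $\tau_1=\tau\,h(x)$ and $\tau_2=\tau\,f(x)$ at the relevant base point $t_1=f(x),\,t_2=h(x)$, so the output covector becomes $-\tau_1p-\tau_2q=-\tau\big(h(x)p+f(x)q\big)$. Thus $SS(F_{f.h})$ contains only points $(x,f(x)h(x);-\tau(h(x)p+f(x)q),\tau)$, and after reduction by $\rho$ and the antipodal map this reads $\partial(f.h)|_x\subset h(x)\,\partial f|_x+f(x)\,\partial h|_x$. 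It is pleasant that the coefficients $h(x),f(x)$ are produced automatically by the geometry of $\Lambda_m$ rather than inserted by hand.

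The main obstacle is the degenerate case $f(x)=0$ or $h(x)=0$, where the reduction to positive functions breaks down: near such a point $f$ (or $h$) changes sign, $m$ fails to be proper on the support, and $F_f\diamond F_h$ no longer coincides with $F_{f.h}$. I expect to treat this by approximation. Fixing a small ball $B\ni x$ on which $f$ is bounded below by $-M$, I would apply the positive case to $f+\varepsilon$ with $\varepsilon>M$ and then let $\varepsilon\to0$. The sum rule turns $(f+\varepsilon)h=fh+\varepsilon h$ into a bound $\partial(fh)|_x\subset h(x)\partial f|_x+\varepsilon\partial h|_x-\varepsilon\partial h|_x$; since $h$ is Lipschitz the set $\partial h|_x$ is bounded, so the spurious Minkowski term $\varepsilon(\partial h|_x-\partial h|_x)$ shrinks to $\{0\}$ as $\varepsilon\to0$, and closedness of the subdifferential together with Proposition \ref{limit} lets me pass to the limit and recover $\partial(fh)|_x\subset h(x)\partial f|_x$, which is the asserted formula when $f(x)=0$. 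Controlling this limit cleanly — in particular making the passage through Proposition \ref{limit} rigorous so that the bound survives the Minkowski cancellation — is the delicate point of the argument.
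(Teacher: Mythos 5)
Your core computation is exactly the paper's: reduce to strictly positive factors, identify $F_{f.h}$ with $F_f\diamond F_h$, bound $SS(\pi_1^{-1}F_f\otimes\pi_2^{-1}F_h)$ via Corollary \ref{cor:opboim} (the Lipschitz bound guaranteeing the non-characteristic intersection), and push through the correspondence $\Lambda_m$ with $\tau_1=\tau t_2$, $\tau_2=\tau t_1$ to produce the coefficients $h(x)$ and $f(x)$; the sign case via Lemma \ref{opp} is also identical. The only genuine divergence is the degenerate case $f(x)=0$ or $h(x)=0$: the paper dispatches it in one line ``by closure of the microsupport and of the previous bound,'' whereas you run an explicit $\varepsilon$-shift through the sum rule. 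Your version is actually the more complete one (it also covers the case where $f$ changes sign through $x$, which the paper's positive/negative dichotomy does not literally reach), but you make it sound harder than it is: Proposition \ref{limit} is not needed at all. For each fixed $\varepsilon$ the inclusion $\partial(fh)|_x\subset h(x)\partial f|_x+\varepsilon\partial h|_x-\varepsilon\partial h|_x$ bounds the \emph{fixed} set $\partial(fh)|_x$ by a set contained in $h(x)\partial f|_x+\overline{B}(0,2\varepsilon\,\mathrm{Lip}(h))$; since $\partial f|_x$ is compact (closedness of the microsupport plus boundedness for Lipschitz functions), intersecting over $\varepsilon>0$ gives the claim by elementary set theory, with no limit of subdifferentials to control. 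Two small points to tidy up: if both $f(x)=0$ and $h(x)=0$ you must shift both factors, expanding $(f+\varepsilon)(h+\delta)$ and cancelling three spurious terms rather than one; and the localization to a neighborhood where $f,h>0$ should be realized by a global Lipschitz modification (e.g.\ $\max(f,\delta/2)$) so that the convolution identity $F_f\diamond F_h=F_{f.h}$ and the properness of $m$ hold globally.
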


\begin{proof}

First we prove the proposition for $f,g$ strictly non negative.

Following the previous proposition it is sufficient to bound the microsupport of $\ko_f\diamond \ko_h$. As for the case of the addition we get :

\[SS((\pi_1^{-1}\ko_f\stackrel{L}{\otimes}\pi_2^{-1}\ko_h))\subset \{ (x,f(x),h(x),-\tau_1\partial f|_x-\tau_2\partial h|_x,\tau_1,\tau_2)\}\] \[\subset T^*(X\times\R\times\R)\]

\noindent with $\tau_i\geq 0$.

We use the bound on the microsupport of the direct image for $Rm_!$. The function $m$ is proper on the support of the sheaves when it is supported in the first quadrant of $\R\times\R$. The Lagrangian $\Lambda_m$ of lemma \ref{cor:opboim} realizes a Lagrangian correspondence between $T^*(X\times\R\times\R)$ and $T^*(X\times\R)$.
Then :

\[\Lambda_m=\{(x,t_1,t_2,x,t,p,\tau_1,\tau_2,p,\tau) |\ t_1t_2=t, \tau_2=\tau t_1\ \text{et}\ \tau_1=\tau t_2 \}\ .\]

We get,

\[SS(Rm_!(\pi_1^{-1}\ko_f\stackrel{L}{\otimes}\pi_2^{-1}\ko_h))\subset \{ (x,f(x)h(x),-\tau(h(x)\partial f|_x+f(x)\partial h|_x),\tau)\} \]\[ \subset T^*(X\times\R)\ .\]

By closure of the microsupport and of the previous bound, the result is also true for $f,g$ positive.

Suppose now without loss of generality that $f$ is negative around some point $x$.
we consider $\tilde f:=-f$. Then \[\partial(fg)|_x=-\partial (\tilde f g)|_x \subset -g(x)\partial \tilde f|_x+f(x) \partial g|_x=g(x)\partial f|_x+f(x) \partial g|_x\]

using $\partial(-f)|_x=-\partial f|_x$

\end{proof}

\begin{exemple}
Let $X=\R$.
\[\partial |x|^2\subset 2 |x| \partial |x|\]
which gives at x=0 :
\[\partial |x|^2|_{x=0}\subset 0\ .\]
\end{exemple}

\subsection{Chain rule}

In this chapter, we would like to understand  the subdifferential of $f\circ L$ with 

\[\left\{
\begin{array}{ll}
 L\fc \R^n\to \R^n \\
 f\fc \R^n \to \R
\end{array}
\right. 
\]

Keeping in mind that graphs ( or epigraphs ) are the good object to microlocally study, we consider the sheaf : $k_{\Gamma_L}\in D^b(\R^n\times\R^n)$.

In the classical study of non-smooth problems the class of graphs is extented to deal with multimap. We will thus consider sets $\Gamma\subset \R^n \times \R^n$.

On the other hand, it is also classical in the field of microlocal analysis of sheaves \cite{KS}  to consider the notion of kernel as elements of $D^b(\R^n\times \R^n)$ 'acting' on $D^b(\R^n)$. Here, we will use a slightly different version of kernel extracted from \cite{Tamarkin}.

\begin{definition}\cite{KS}\cite{Tamarkin}
Let $K_i\in D^b(k_{X\times X\times \R})$, and $\cF\in \cD(k_{X})$, we define:
\[K_2\circ K_1:=R{q_{13s}}_!(q_{124}^{-1} K_1 \stackrel{L}{\otimes} q_{235}^{-1}K_2)\] 
\[\Phi_{K_1}(\cF):=R{q_{1s}}_!(K\stackrel{L}{\otimes} q_{23}^{-1} \cF)\] with canonical maps:

\noindent   $q_{124}\fc X_1\times X_2 \times X_3\times \R_1 \times \R_2\to X_1 \times X_2\times \R_1$

\noindent $q_{235}\fc X_1\times X_2 \times X_3\times \R_1\times \R_2\to X_2 \times X_3\times \R_2$
  
\noindent and $q_{13s}\fc X_1\times X_2 \times X_3\times \R_1\times \R_2\to X_1 \times X_3\times \R $ projection on factor $1,3$ and sum over the $\R$ factor.
 
\noindent $q_{1s}\fc X_1\times X_2\times\R\times\R\to X\times\R$ the sum on $\R\times\R$ and the projection on the first factor.
  
\noindent $q_{23}\fc X_1\times X_2\times\R \to X_2\times\R$ projections over the second and third factor. 

\end{definition}

Following \cite{Tamarkin}, we get the following bound on the microsupport.

\begin{prop}\label{bok} \cite{Tamarkin}
Let $F\in D^b(X\times\R)$ and $K\in D^b(X\times X\times \R)$ satisfying :

\[F*\ko_{[0,\infty[}\simeq F \ and\  K*\ko_{[0,\infty[}\simeq K\ .\]

Then $R(\Phi_K(F))\subset R(F)\circ R(K)$ where $\circ$ denote the composition of Lagrangian correspondence.

\end{prop}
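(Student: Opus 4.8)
The plan is to peel $\Phi_K(F)=R{q_{1s}}_!\bigl(K\stackrel{L}{\otimes}q_{23}^{-1}F\bigr)$ into the elementary functorial operations out of which it is built — an inverse image, a tensor product, and a proper direct image — to bound the microsupport at each stage with Theorem \ref{th:opboim}, and only at the very end to pass to the reduction $Red$ and recognise the resulting conic set as the Lagrangian composition $R(F)\circ R(K)$.

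First I would bound $SS\bigl(K\stackrel{L}{\otimes}q_{23}^{-1}F\bigr)$. The inverse image bound of Theorem \ref{th:opboim}(3) controls $SS(q_{23}^{-1}F)$ by $q_{23d}\,q_{23\pi}^{-1}SS(F)$, and the additive tensor bound of Corollary \ref{cor:opboim}(1) then controls the tensor product by the sum $SS(K)+q_{23d}\,q_{23\pi}^{-1}SS(F)$. To invoke the latter I must check the non-characteristic hypothesis, namely that the two microsupports meet the antipodes of one another only in the zero section. This is exactly where the hypotheses $F*\ko_{[0,\infty[}\simeq F$ and $K*\ko_{[0,\infty[}\simeq K$ enter: as in Remark \ref{nicelip}, being a $\ko_{[0,\infty[}$-module for $*$ forces both microsupports into the half-space $\{\tau\geq 0\}$ of each $T^*\R$-factor, so that a covector of $SS(K)$ and an antipodal covector of $SS(q_{23}^{-1}F)$ can agree only where $\tau=0$, hence in the zero section.

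Next I would push forward along $q_{1s}$, which is the projection forgetting the $X_2$-factor composed with the sum $\R\times\R\to\R$. Neither factor is proper in general, but the half-space condition repairs this: on the support of $K\stackrel{L}{\otimes}q_{23}^{-1}F$ the sheaf is concentrated in the positive $\R$-direction, so $q_{1s}$ is proper there and the proper direct image bound $SS\bigl(R{q_{1s}}_!(-)\bigr)\subset q_{1s\pi}\,q_{1sd}^{-1}SS(-)$ of Theorem \ref{th:opboim}(2) applies. Following the remark after Corollary \ref{cor:opboim}, I would rewrite $q_{1s\pi}\,q_{1sd}^{-1}$ as the action of the Lagrangian correspondence $\Lambda_{q_{1s}}$, turning the composite estimate into a single correspondence applied to a conic subset expressed through $SS(K)$ and $SS(F)$.

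Finally I would apply the reduction $Red=\rho(\,\cdot\,\cap\{\tau>0\})$ and check that it intertwines the conic correspondence built above with the composition of the reduced correspondences $R(F)$ and $R(K)$. Since all microsupports lie in $\{\tau\geq 0\}$ and the reduction singles out the slice $\tau=1$, the symplectic reduction of $\Lambda_{q_{1s}}$ composed with the tensor bound is precisely $R(F)\circ R(K)$, which yields $R(\Phi_K(F))\subset R(F)\circ R(K)$. I expect the main obstacle to be this last bookkeeping: confirming that intersecting with $\{\tau>0\}$ and projecting by $\rho$ produces exactly the Lagrangian-correspondence composition and no larger conic set, and that the properness and non-characteristicity demanded by the functorial bounds are genuinely furnished by the $\ko_{[0,\infty[}$-module hypotheses. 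This is the substance of the cited estimate of \cite{Tamarkin}, and the delicate point is the passage between the conic (contact) picture on $T^*(X\times\R)$ and its symplectic reduction to $T^*X$.
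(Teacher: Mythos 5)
The paper itself does not carry out this argument: its ``proof'' simply defers to Proposition 3.11 of \cite{Tamarkin} and notes that the estimate rests on the bounds recalled in Corollary \ref{cor:opboim}. Your outline is exactly that strategy made explicit --- inverse image, tensor, proper pushforward, then reduction --- so in spirit you are reproducing the cited proof rather than diverging from it.

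There is, however, one step that fails as written, and it is precisely the one you flag as ``bookkeeping'' at the end. You argue that since $SS(K)$ and $SS(q_{23}^{-1}F)$ both lie in $\{\tau\geq 0\}$ in the relevant $T^*\R$-factors, their intersection with antipodes is forced into $\{\tau=0\}$, \emph{hence} into the zero section. The last implication is a non sequitur: at $\tau=0$ the covectors may still have nonzero components along $T^*X$ (take $F=\ko_{Z\times[0,\infty[}$ for a closed $Z\subset X$ with nontrivial conormal directions; this satisfies $F*\ko_{[0,\infty[}\simeq F$ but $SS(F)\cap\{\tau=0\}$ is not contained in the zero section). So the non-characteristicity hypothesis of Corollary \ref{cor:opboim}(1) is \emph{not} furnished by the $\ko_{[0,\infty[}$-module condition alone, and one must instead use the refined sum $\mathbin{\widehat+}$ of \cite{KS} and then check that the extra $+_\infty$ contribution does not pollute the locus $\{\tau>0\}$ that survives the reduction $Red$ --- the paper makes exactly this caveat in the remark following its sum-of-subdifferentials proposition. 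A similar gap sits in the pushforward step: the sum map $\R\times\R\to\R$ is not proper, and $R{q_{1s}}_!$ is a proper \emph{direct image with compact supports}, so Theorem \ref{th:opboim}(2) does not apply verbatim; controlling $SS$ of $Rq_!$ without properness on the support is again part of the actual content of Tamarkin's Proposition 3.11. Your skeleton is the right one, but these two points are the theorem, not the bookkeeping.
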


\begin{proof}
We refer to \cite{Tamarkin} Proposition 3.11 for a complete proof. This is mainly based on the bound of \cite{KS} that we recalled in Proposition \ref{cor:opboim}.
\end{proof}

\begin{prop}
Let $f\fc X\to R$ continuous and $L\fc X\to Y$ a $C^1$ map. Then

\[\partial (f\circ L)|_x \subset \partial f|_{f(x)} \circ dL(x)\ .\]
\end{prop}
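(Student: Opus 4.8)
The plan is to realize $f\circ L$ through the kernel calculus of the previous section and then read off the microsupport bound from Proposition \ref{bok}. (Here the natural reading is $f\fc Y\to\R$, and the right-hand side $\partial f|_{L(x)}\circ dL(x)$ means the pullback $dL(x)^*\bigl(\partial f|_{L(x)}\bigr)$ of covectors by $dL(x)$.) First I would introduce the kernel $K:=\ko_{\Gamma_L\times[0,\infty[}$ on $X\times Y\times\R$, where $\Gamma_L=\{(x,L(x))\}$ is the graph of $L$. The initial step is the identity $\Phi_K(F_f)\simeq F_{f\circ L}$. Since $\epi(f\circ L)=(L\times\id_\R)^{-1}\epi(f)$, this is a fiberwise convolution computation identical in spirit to the one proving $F_f*F_h=F_{f+h}$: over a point $x$ the relevant sheaf is $\ko_{[f(L(x)),\infty[}*\ko_{[0,\infty[}\simeq\ko_{[f(L(x)),\infty[}$, and $q_{1s}$ is proper on the upward-closed support, so $Rq_{1s!}$ computes the expected epigraph sheaf.

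Second, I would verify the two hypotheses of Proposition \ref{bok}. The condition $F_f*\ko_{[0,\infty[}\simeq F_f$ holds because $F_f$ is an epigraph sheaf (it lies in the Tamarkin category, as noted after its definition), and $K*\ko_{[0,\infty[}\simeq K$ is immediate from the factor $\ko_{[0,\infty[}$ in the definition of $K$. This entitles me to conclude $R(\Phi_K(F_f))\subset R(F_f)\circ R(K)$.

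Third, I would compute the reduced microsupport $R(K)$ and recognize it as the Lagrangian correspondence attached to $dL$. The sheaf $\ko_{\Gamma_L}$ has microsupport the conormal $\nu^*_{\Gamma_L}(X\times Y)=\{(x,L(x);-dL(x)^*\eta,\eta)\}$, and tensoring by $\ko_{[0,\infty[}$ in the extra variable contributes $SS(\ko_{[0,\infty[})$; after intersecting with $\{\tau>0\}$ and applying $\rho$ one is reduced to the locus $t=0$ and obtains $R(K)=\{((x,-dL(x)^*\bar\eta),(L(x),\bar\eta))\}\subset T^*X\times T^*Y$. Composing with $R(F_f)=(\partial f)^a$ via the antipode-in-the-middle convention for correspondences then yields $R(F_f)\circ R(K)=\{(x,-dL(x)^*\xi):\xi\in\partial f|_{L(x)}\}$; since $R(\Phi_K(F_f))=\bigl(\partial(f\circ L)\bigr)^a$, taking antipodes produces exactly $\partial(f\circ L)|_x\subset dL(x)^*\bigl(\partial f|_{L(x)}\bigr)$.

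The main obstacle I expect is twofold. First, keeping the antipodes and the orientation of the correspondence consistent throughout, since a single sign slip in $R(K)$ or in the composition convention flips the inclusion to the incorrect $-dL(x)^*\partial f|_{L(x)}$. Second, the hypothesis that $L$ is only $C^1$ means $\Gamma_L$ is merely a $C^1$ submanifold, whereas the Kashiwara--Schapira microsupport estimates are stated for smooth data; I would address this either by approximating $L$ by smooth maps and passing to the limit with the closure of the microsupport and Lemma \ref{limit_microsupport}, or by checking directly that $(L\times\id_\R)^{-1}F_f$ is non-characteristic using that $SS(F_f)\subset\{\sigma\geq 0\}$, so that only the $\{\tau>0\}$ portion intervenes in the formation of $R$.
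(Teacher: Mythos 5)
Your proposal is correct and follows essentially the same route as the paper: introduce the kernel $\ko_{\Gamma_L\times[0,\infty[}$, identify its representative with the graph of the correspondence induced by $dL$ (as the reduction of the conormal of $\Gamma_L\times\{0\}\times\R$), and conclude by Proposition \ref{bok}. Your version simply fills in the details the paper leaves implicit (the identity $\Phi_K(F_f)\simeq F_{f\circ L}$, the hypotheses of Proposition \ref{bok}, the sign bookkeeping, and the $C^1$ regularity of $\Gamma_L$), and correctly reads the paper's $\partial f|_{f(x)}$ as the typo it is for $\partial f|_{L(x)}$.
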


\begin{proof}
We consider the sheaf $G:=\ko_{\{graph(L) \times [0,\infty[ \}}$. The representative of $G$ is  $graph(dL)$ that we see as the reduction of the conormal of $graph(L)\times \{0\} \times \R$. We thus use Proposition \ref{bok}.
\end{proof}

\begin{rem}
 The generalized Legendre transform of Appendix A fits also in this framework.
\end{rem}

\section{Discussions}

\subsection{Second differential}

Even in the smooth case, there is no canonical second differential for a function define on a manifold. Indeed, this must involve partial derivatives along the $x$ variable for function in $C^\infty(TX,\R)$. But, this would depend on a choice. %
In the case of $X=\R^n$, there exists a canonical integrable connection. 
We thus consider:

\[s_v\fc \R^n\to\mathscr{P}_v=\R^n\times \{v\}\]
\[x\mapsto (x,v)\]

\begin{definition}

Let $f\in C^0(\R^n,\R)$ such that $\partial f$ is compact. We denote by:

\[\mathscr{R}(f)=\{(x,v,t)\in TX\times \R|\ \exists x^*\in (\partial f)|_x\ s.t.\ <x^*,v>\leq t\}\ .\]
 
We name generalized Hessian at $(x,v)\in T\R^n$:

\[\nabla^2(f)(v)|_x:=R(s_v^{-1}k_{\mathscr{R}(f)})^a|_x\subset T_x^*X \ .\]

\end{definition} 

\begin{rem}

The compactness of $\partial f$ is sufficient in order to get $\mathscr{R}(f)$  closed, and such that we can define $k_{\mathscr{R}(f)}$.

\end{rem}

\begin{rem}
We can see $\nabla^2(f)$ as a function from $TX$ to the set of subsets of $T^*X$. Thus to the vector $(x,v)\in T^*\R^n$, $d^2f$, we associate the singleton $\{d^2f_{x}(v,.)\}\subset T^*X$. We call $\nabla^2_{stand}f$ this function.
\end{rem}

\begin{prop}
Let $f\in C^2(\R^{n},\R)$. Then: \[\nabla^2(f)=\nabla^2_{stand}{f} \ .\] 
\end{prop}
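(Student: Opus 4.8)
The plan is to unwind all the definitions explicitly and reduce the statement to the already-established $C^1$ computation together with the symmetry of second partial derivatives. First I would observe that since $f\in C^2\subset C^1$, the earlier example gives $\partial f|_x=\{df(x)\}$ at every point. Consequently the existential clause in the definition of $\mathscr R(f)$ collapses: the condition ``$\exists\, x^*\in(\partial f)|_x$ with $\langle x^*,v\rangle\le t$'' becomes the single inequality $\langle df(x),v\rangle\le t$. Thus, writing $g\fc\R^n\times\R^n\to\R$, $g(x,v):=\langle\nabla f(x),v\rangle$, we obtain
\[\mathscr R(f)=\{(x,v,t)\in TX\times\R\ \fc\ g(x,v)\le t\}=\epi(g),\]
which is closed because $g$ is continuous, so that $\ko_{\mathscr R(f)}$ is well defined.

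Next I would perform the slice pullback. Here $s_v$ is understood as $s_v\times\id_\R\fc(x,t)\mapsto(x,v,t)$, and for fixed $v$ its preimage of $\mathscr R(f)=\epi(g)$ is exactly $\epi(h_v)$, where $h_v(x):=g(x,v)=\langle\nabla f(x),v\rangle$. Since inverse image commutes with the constant sheaf on a locally closed set, $s_v^{-1}\ko_{\mathscr R(f)}=\ko_{\epi(h_v)}=F_{h_v}$. Feeding this into the definition of the generalized Hessian gives
\[\nabla^2(f)(v)|_x=R\big(s_v^{-1}\ko_{\mathscr R(f)}\big)^a|_x=R(F_{h_v})^a|_x=\partial h_v|_x.\]
Because $f\in C^2$, the function $h_v$ is $C^1$, so the $C^1$ example applies once more and $\partial h_v|_x=\{dh_v(x)\}$.

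It then remains to identify $dh_v(x)$ with the standard value. Differentiating $h_v(x)=\sum_i\partial_i f(x)\,v_i$ yields $\partial_j h_v(x)=\sum_i\partial_j\partial_i f(x)\,v_i$, i.e. $dh_v(x)=\mathrm{Hess}\,f(x)\cdot v$; by symmetry of the second partials (valid since $f\in C^2$) this equals $d^2f_x(v,\cdot)$, which is precisely $\nabla^2_{stand}f(v)|_x$. Assembling over all $(x,v)$ gives $\nabla^2(f)=\nabla^2_{stand}f$. I expect the only genuinely delicate points to be the two bookkeeping identifications --- checking that $s_v^{-1}$ really returns the constant sheaf $F_{h_v}$ on the sliced epigraph (where the regularity of $f$ guarantees $h_v\in C^1$), and tracking the antipodal $a$ and the reduction $\rho$ through $R(\cdot)^a$ consistently in both $\nabla^2$ and $\partial$; the derivative computation itself is elementary, its only substantive input being that $C^2$ regularity forces the mixed partials to be symmetric.
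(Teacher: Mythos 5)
Your proof is correct. The paper actually states this proposition without any proof, so there is nothing to compare against; your argument --- using the $C^1$ example to get $\partial f|_x=\{df(x)\}$ and hence $\mathscr{R}(f)=\epi(g)$ with $g(x,v)=\langle df(x),v\rangle$, identifying $s_v^{-1}\ko_{\mathscr{R}(f)}$ with $F_{h_v}$ for $h_v=\langle df(\cdot),v\rangle\in C^1$ (reading $s_v^{-1}$ as pullback along $(x,t)\mapsto(x,v,t)$), and applying the $C^1$ example a second time together with symmetry of the mixed partials --- is the natural one and correctly supplies the omitted details.
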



\subsection{Graph selector}

The graph selector is a powerful tool defined through symplectic considerations ( generating functions and Floer homology) but with deep impact in Aubry-Mather theory, in the study of Hamilton-Jacobi equations and more generally in symplectic dynamics. Thus, for every smooth exact Lagrangian submanifold $L$, and by a process of selection depending only of Floer homologies $HF^{\leq a}(L,\nu_x^*X)$, it is possible to associate a Lipschitz function $c_L\fc X \to \R$ which is almost everywhere smooth and such that the graph of the differential (if defined) is included into the Lagrangian $L$. The graph selector belongs then to the realm of spectral invariants. In his thesis and in a paper in preparation \cite{whatsheaves}, the author defined a generalization of such invariants for couples of objects in the Tamarkin category $\mathcal D(X)$.

We would like to give a bound on a the subdifferential of a function that has the same properties as the graph selector. We have the following theorem:

\begin{thm}
Let $L$ be a smooth exact Lagrangian of $T^*X$. Then,

\[\partial c_L\subset co(L) \ . \]
\end{thm}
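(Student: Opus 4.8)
The plan is to realize the graph selector's subdifferential as the representative of an explicit sheaf built from $L$ and then to apply the limiting and functorial machinery developed above. First I would recall the sheaf-theoretic construction of the graph selector: to the smooth exact Lagrangian $L$ one associates (through its generating function, or equivalently through Tamarkin's category) a sheaf $G_L \in \cD(\ko_{X\times\R})$ whose representative $R(G_L)$ is $L$ itself, and whose associated function $c_L$ satisfies $G_L \simeq F_{c_L}$ away from the singular locus. The key structural input is that $c_L$ is a \emph{spectral invariant}, hence a min-max of the generating function, so the sheaf $F_{c_L}=\ko_{\epi(c_L)}$ sits inside $D^b(\ko_{X\times\R})$ with $SS(F_{c_L})$ controlled by the conification $Cone(L)$.

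The heart of the argument is the inclusion $SS(F_{c_L}) \subset \overline{\R^+ \cdot Cone(co(L))}$ at the reduced level, i.e.\ $\partial c_L = R(F_{c_L})^a \subset co(L)$. I would establish this by the following chain. Since $c_L$ is Lipschitz, Remark \ref{nicelip} lets me recover all of $SS(F_{c_L})$ from $\partial c_L$, so it suffices to bound $\partial c_L|_x$ fiberwise. By the proof of boundness, $\partial c_L|_x \subset N^*(\epi(c_L))$ via Proposition \ref{bc}, and the strict normal cone is computed from Whitney limits of difference quotients of $c_L$. Because $c_L$ is smooth almost everywhere with $dc_L(x) \in L$ wherever defined, every element of $\partial c_L|_x$ arises as a limit of such differentials; invoking Theorem \ref{cbound} (Clarke) together with the inclusion $\partial c_L \subset \partial_c c_L$ of the previous subsection, I get
\[
\partial c_L|_x \subset \partial_c c_L(x) = co\{\lim dc_L(x_i) : x_i \to x,\ x_i \notin \Omega_{c_L}\} \subset co(L),
\]
where the last inclusion uses that $L$ is closed and $dc_L(x_i) \in L$ at points of differentiability, so all limit points lie in $L$ and their convex hull lies in $co(L)$.

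I expect the main obstacle to be justifying rigorously that the graph selector's differential lands in $L$ at \emph{every} point of differentiability, and that the almost-everywhere smoothness of $c_L$ is genuine (Rademacher applies since $c_L$ is Lipschitz, but one must know the selection process produces $dc_L(x) \in L$ and not merely in some larger cone). This is precisely the defining property of a graph selector cited in the preceding discussion, so I would appeal to the construction in \cite{whatsheaves}; the sheaf-theoretic avatar is that $R(G_L)=L$ forces the reduced microsupport along $\tau=1$ to sit over $L$. The secondary subtlety is the passage from the pointwise Clarke bound to the global statement $\partial c_L \subset co(L)$ as subsets of $T^*X$: here I would use that $\partial c_L$ is closed and fiberwise contained in the closed convex set $co(L)\cap \nu^*_x X$, which assembles to the claimed inclusion. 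A cleaner alternative, which I would present if the generating-function data is available, is to bound $SS(G_L)$ directly by $N^*(\epi(c_L))$ and then apply $Red$, using the property $Red(Cone(co(L))) = co(L)$ to conclude.
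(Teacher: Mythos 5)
Your core chain $\partial c_L \subset \partial_c c_L = co\{\lim dc_L(x_i)\} \subset co(L)$, using the Clarke bound from the previous subsection, Clarke's limit formula (Theorem \ref{cbound}), the almost-everywhere property $dc_L(x)\in L$ of the graph selector, and the closedness of $L$, is exactly the paper's proof. The additional sheaf-theoretic framing you propose around it is not needed and not used by the paper.
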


\begin{proof}
We use the fact that the Clarke subdifferential bounds the homological subdifferential.
Moreover, we use Clarke's formula for the subdifferential of Lipschitz function. Indeed, $c_L$ is Lipschitz and its derivatives belongs almost everywhere to $L$. By closedness of $L$, $\partial_c f \subset co(L)$.

\end{proof}

\begin{rem}
 Of course, the Clarke subdifferential satisfies the same property. Nevertheless, it is not adapted to the  study of uniform convergence of graph selector.
\end{rem}

\subsection{Rigidity of Poisson bracket}

The Poisson bracket is defined using differentials of functions. Cardin and Viterbo noticed however that it satisfies some kind of $C^0$ rigidity.

\begin{thm}\cite{CV}
Let $M$ be a symplectic manifold, $f_n,g_n\in C^{1,1}(M,\R)$ such that the Hamiltonian flow of all $g_n$ exists for a given time $t$. If $||\{g_n-g\}||_\infty \to 0$ and $||f_n-f||_\infty\to 0$ and $||\{f_n,g_n\}||_\infty \to 0$ with $f,g\in C^{1,1}(M,\R)$. Then, 

\[\{f,g\}=0 \ .\]
\end{thm}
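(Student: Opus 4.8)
The plan is to argue by contradiction and convert a hypothetical nonzero value of $\{f,g\}$ into a $C^0$-robust positive lower bound for $\|\{f_n,g_n\}\|_\infty$. Since $f,g\in C^{1,1}$, the fields $X_f,X_g$ are Lipschitz and $\{f,g\}=\omega(X_f,X_g)$ is a genuine continuous function, so its pointwise values are meaningful and its flows exist for short time (Picard--Lindel\"of). Suppose $\{f,g\}(p)=2c>0$ at some point $p$; by continuity there is a Darboux ball $B$ centered at $p$ with $\{f,g\}\geq c$ on $B$.

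The geometric input I would isolate first is the elementary drift estimate: differentiating along the flow of $f$ gives $\frac{d}{dt}\,g(\phi_f^t(x))=-\{f,g\}(\phi_f^t(x))$, whence $|g(\phi_f^t(x))-g(x)|\leq |t|\,\|\{f,g\}\|_\infty$. Thus a small Poisson bracket forces the flow of $f$ to nearly preserve the level sets of $g$, and symmetrically. On $B$ the condition $\{f,g\}\geq c$ makes $f$ and $g$ behave like a pair of transverse, independent ``coordinates,'' so the super- and sublevel sets $\{f\geq f(p)+\eps\}$, $\{f\leq f(p)-\eps\}$, $\{g\geq g(p)+\eps\}$, $\{g\leq g(p)-\eps\}$ form a linked quadruple inside $B$.

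The crux, and the only genuinely symplectic ingredient, is the positivity of the associated Poisson bracket invariant (in the spirit of Cardin--Viterbo and Entov--Polterovich): if $\|\{f,g\}\|_\infty$ were arbitrarily small, the drift estimate would let the flow of $f$ displace a subball $B'\subset B$ while nearly fixing the levels of $g$, contradicting the positivity of the displacement energy $e(B')>0$ (equivalently, of the Hofer--Zehnder capacity $\chz(B')$, via Gromov non-squeezing). This produces a constant $\delta>0$, depending only on the quadruple configuration and not on the functions, with $\|\{F,G\}\|_\infty\geq\delta$ for every pair $F,G$ respecting the same level separations on $B$. Establishing this lower bound is the hard, non-soft step; everything surrounding it is formal.

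Finally I would conclude softly. Because $\|f_n-f\|_\infty\to0$ and $\|g_n-g\|_\infty\to0$, for all large $n$ the functions $f_n,g_n$ satisfy the same level separations on $B$ (with $\eps$ shrunk slightly), hence realize the same quadruple and satisfy $\|\{f_n,g_n\}\|_\infty\geq\delta>0$. This contradicts the hypothesis $\|\{f_n,g_n\}\|_\infty\to0$. Therefore no such $p$ exists and $\{f,g\}\equiv0$.
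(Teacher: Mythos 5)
This theorem is quoted from Cardin--Viterbo and the paper offers no proof of it, so there is nothing internal to compare against; judged on its own, your argument has a fatal gap. Your entire strategy is local: you fix a Darboux ball $B$ around a point where $\{f,g\}\neq 0$ and claim a ``crux'' lemma asserting a uniform constant $\delta>0$ with $\|\{F,G\}\|_\infty\geq\delta$ for \emph{every} pair $F,G$ realizing the linked quadruple of level separations of $f,g$ on $B$. That lemma is false in dimension $\geq 4$, and the paper itself records the counterexample immediately after the theorem: Buhovsky's functions $f_n=x+\frac{\chi(z)}{\sqrt n}\cos(nu)$, $g_n=y+\frac{\chi(z)}{\sqrt n}\sin(nu)$ on an open subset of $\R^4$ converge uniformly to $f=x$, $g=y$, satisfy $\{f_n,g_n\}=0$ identically, and yet $\{f,g\}=1$. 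For large $n$ these $f_n,g_n$ realize (up to shrinking $\eps$) exactly the quadruple of super- and sublevel sets of $x$ and $y$ in any Darboux ball, so the infimum of $\|\{F,G\}\|_\infty$ over your configuration is $0$, not $\delta>0$. The appeal to displacement energy does not rescue this: the drift estimate only controls $g$ along the flow of $f$, it does not produce a Hamiltonian isotopy displacing a subball while respecting all four constraints, and the positivity of the relevant Poisson-bracket invariants ($pb_4$-type quantities) is precisely the kind of statement that fails for such coordinate quadruples in $\R^4$.

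The symptom of the error is that your proof never uses the hypothesis that the Hamiltonian flow of every $g_n$ exists up to the fixed time $t$ --- a hypothesis the paper explicitly flags as necessary (``this property is not local and need the existence of the flow for a given time and all $n$''). Any correct argument must exploit it globally: from $\frac{d}{ds}f_n(\phi_{g_n}^s(x))=\{g_n,f_n\}(\phi_{g_n}^s(x))$ one gets $\|f_n\circ\phi_{g_n}^t-f_n\|_\infty\leq |t|\,\|\{f_n,g_n\}\|_\infty\to 0$, and the actual content of Cardin--Viterbo is to pass this almost-invariance to the limit using $C^0$/spectral rigidity of the Hamiltonian flows $\phi_{g_n}^t$ (controlled via the spectral distance by $\|g_n-g\|_\infty$), concluding that $f$ is invariant under the flow of $g$ and hence $\{f,g\}=0$. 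I would redirect your effort toward that global scheme rather than trying to fix the local lower bound.
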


The following example from \cite{buh} shows that this property is not local and need the existence of the flow for a given time and all $n$.

\begin{exemple}
Take $f_n:=x+\frac{\chi(z)}{\sqrt{n}}\cos(nu)$ and $g_n:=y+\frac{\chi(z)}{\sqrt{n}}\sin(nu)$ defined over an open subset of $(\R^4,x\wedge y+u\wedge z)$ and with $\chi(z)=\sqrt{1-z}$. We then have the following uniform convergence $f_n \to f$ and $g_n \to g$. But, $\{f_n,g_n\}=0$ and $\{f,g\}=1$.
\end{exemple}

However, we can ask the more general question of the behavior of the Poisson bracket in a local chart according to our results on differentials.

\begin{thm}
Let $f_n,g_n\in C^{1}(M,\R)$ and $f,g\in C^{1}(M,\R)$ such that $f_n \to f $ and $g_n \to g$ in the $C^0$ topology. We denote by $Y_{\epsilon,n}=\left\{\{f_n(.+v),g_n(.+v')\}(x),(v,v')\in B^2(0,\epsilon) \right\}$.
Then:
\[\{f,g\}(x)\in \bigcap\limits_{\epsilon>0} \bigcap\limits_{k>0} \overline{\bigcup\limits_{n>k} Y_{\epsilon,n}}\ .\]

\end{thm}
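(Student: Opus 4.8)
The plan is to reduce the statement to the approximation property of differentials of $C^1$ functions under $C^0$ limits, which is exactly the content of Proposition \ref{limit}. First I would work in a Darboux chart around $x$, so that $M$ is identified with an open subset of $\R^{2n}$, the symplectic form is constant, and the Poisson bracket is the constant skew pairing $\{u,v\}(y) = \Pi(du(y), dv(y))$, where $\Pi\fc T^*_yM\times T^*_yM \to \R$ is the (constant, hence continuous) Poisson bivector. Since $u \mapsto u(\cdot + v)$ has differential $du(\cdot+v)$, one computes $\{f_n(\cdot+v),\, g_n(\cdot+v')\}(x) = \Pi(df_n(x+v), dg_n(x+v'))$, so that
\[ Y_{\epsilon,n} = \{\, \Pi(df_n(y), dg_n(y')) \fc y,y' \in B(x,\epsilon)\,\}. \]
Because $Y_{\epsilon',n} \subset Y_{\epsilon,n}$ for $\epsilon' \leq \epsilon$, it suffices to treat arbitrarily small $\epsilon$; in particular I may freely shrink the chart and, using localizability, replace $f_n,g_n,f,g$ by functions on a closed manifold (\emph{e.g.} a torus containing the chart) agreeing with them near $x$, so that Proposition \ref{limit} applies. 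The goal becomes: for every $\epsilon$ and $k$, exhibit indices $n > k$ and points $y,y' \in B(x,\epsilon)$ with $\Pi(df_n(y), dg_n(y'))$ arbitrarily close to $\Pi(df(x), dg(x)) = \{f,g\}(x)$.

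The central point, and the main obstacle, is that we only have $C^0$ convergence, so $df_n$ need not converge to $df$ and $\{f_n,g_n\}(x)$ need not converge to $\{f,g\}(x)$; this is precisely the $C^0$-rigidity phenomenon, and it forces us to use Proposition \ref{limit} rather than naive continuity. Applying Proposition \ref{limit} separately to $f_n \to f$ and $g_n \to g$ would only produce two a priori different extractions of the index $n$, whereas $Y_{\epsilon,n}$ requires the \emph{same} $n$ for both factors. To obtain a single simultaneous extraction I would instead apply Proposition \ref{limit} to the product sequence $h_n\fc X\times X \to \R$, $h_n(y,y') := f_n(y) + g_n(y')$, which converges in $C^0$ to $h(y,y') := f(y)+g(y')$. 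Each $h_n$ is $C^1$, so its subdifferential is the single-valued graph of its differential, $\partial h_n|_{(y,y')} = \{(df_n(y), dg_n(y'))\}$, and likewise $\partial h|_{(x,x)} = \{(df(x), dg(x))\}$.

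Proposition \ref{limit} then gives $(df(x), dg(x)) \in \bigcap_{k} \overline{\bigcup_{n>k}\partial h_n}$, that is, there is a subsequence $n_j \to \infty$ and points $(y_j, y'_j)$ with $(df_{n_j}(y_j), dg_{n_j}(y'_j)) \to (df(x), dg(x))$ in $T^*(X\times X)$. Since the limiting covector lies over $(x,x)$, convergence in the cotangent bundle forces $(y_j, y'_j) \to (x,x)$, so $y_j, y'_j \in B(x,\epsilon)$ for $j$ large and every fixed $\epsilon$; this is how the base points land in the prescribed ball, for free. Finally, continuity of the pairing $\Pi$ yields
\[ \Pi(df_{n_j}(y_j), dg_{n_j}(y'_j)) \longrightarrow \Pi(df(x), dg(x)) = \{f,g\}(x), \]
and the left-hand terms lie in $Y_{\epsilon,n_j}$ with $n_j \to \infty$. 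Hence $\{f,g\}(x) \in \overline{\bigcup_{n>k}Y_{\epsilon,n}}$ for every $k$ and every $\epsilon$, which is the asserted membership in $\bigcap_{\epsilon} \bigcap_{k} \overline{\bigcup_{n>k}Y_{\epsilon,n}}$. Beyond the product trick, the only delicate verifications are the reduction of the translated bracket to the pairing of shifted differentials and the invariance of the subdifferential computation under the cutoff; both are routine once the Darboux chart is fixed.
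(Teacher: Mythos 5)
Your proof is correct and follows the same route as the paper, whose own proof is a one-line appeal to Proposition \ref{limit} combined with the fact that $\partial$ recovers the differential of a $C^1$ function. The one substantive detail you add --- applying Proposition \ref{limit} to $h_n(y,y')=f_n(y)+g_n(y')$ on the product so as to obtain a \emph{single} extraction controlling $df_n$ and $dg_n$ simultaneously, which is genuinely needed since under mere $C^0$ convergence the differentials are a priori unbounded and the bilinear pairing cannot be controlled factor by factor --- is precisely the point the paper's proof glosses over, and you handle it correctly.
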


\begin{proof}
The statement is a direct application of the limiting behavior of the subdifferential according to uniform convergence and of the fact that the subdifferential generalizes the derivatives of $C^1$ functions.
\end{proof}

\appendix

\section{Fourrier/Legendre transform} \label{legendre}

A classical tool in convex analysis is the Fenchel-Legendre transform.

\begin{definition}\label{defleg}
Let $f\fc \R^n\to \R$. Its Fenchel-Legendre transform $f^*$ is given by :

\[ f^*(k)=\inf\limits_{x\in\R^n}(-\langle k,x \rangle+f(x))\ .\]
\end{definition}

The definition of such a transformation is not well posed according to sheaf operation. But, sheaf theoretic methods are also interesting to study envelopes and in the case of proper convex function :

\begin{prop}
Let $K=\ko_{-\{\langle x,y \rangle \leq t\}}$ and $f$  superlinear convex function . Then :

\[K\circ F_f\simeq \ko_{\epi(f^*)}\ .\]

\end{prop}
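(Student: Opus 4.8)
The plan is to unfold the kernel action and reduce the statement to a proper pushforward with convex, acyclic fibres, in the same spirit as the computation $F_f*F_h=F_{f+h}$. Reading $K=\ko_{-\{\langle x,y\rangle\leq t\}}$ as the constant sheaf on the closed set $\{-\langle x,y\rangle\leq t\}$, and writing the kernel action on the space $X\times X\times\R\times\R$ with coordinates $(k,x,t_1,t_2)$, the definition of $\Phi_K$ gives
\[
K\circ F_f=\Phi_K(F_f)=Rq_!\bigl(p_1^{-1}K\stackrel{L}{\otimes}p_2^{-1}F_f\bigr),
\]
where $p_1(k,x,t_1,t_2)=(k,x,t_1)$, $p_2(k,x,t_1,t_2)=(x,t_2)$ and $q(k,x,t_1,t_2)=(k,t_1+t_2)$. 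Since $K$ and $F_f=\ko_{\epi(f)}$ are constant sheaves on closed sets, the pulled-back tensor product is the constant sheaf $\ko_W$ on the intersection
\[
W=\bigl\{(k,x,t_1,t_2):-\langle k,x\rangle\leq t_1,\ f(x)\leq t_2\bigr\},
\]
so the whole computation reduces to identifying $Rq_!\ko_W$.

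The next step is to understand the restriction $q|_W\colon W\to X\times\R$. Its image is the set of $(k,w)$ for which the fibre
\[
Z_{k,w}=\bigl\{(x,t_1):-\langle k,x\rangle\leq t_1\leq w-f(x)\bigr\}
\]
is nonempty (here $t_2=w-t_1$). Now $Z_{k,w}$ is convex, being the intersection of the half-space $\{t_1+\langle k,x\rangle\geq0\}$ with the sublevel set $\{f(x)+t_1\leq w\}$ of a convex function, and it is nonempty exactly when $f(x)-\langle k,x\rangle\leq w$ for some $x$, i.e. when $f^*(k)=\inf_x(-\langle k,x\rangle+f(x))\leq w$. Thus $q(W)=\epi(f^*)$. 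Here I use Definition \ref{defleg}: the sign built into $K$ is precisely what turns the convolution into the infimum defining $f^*$, and it forces the \emph{lower} bound $-\langle k,x\rangle$ on $t_1$, which is what will make the fibres bounded. Note also that superlinearity of $f$ makes $f^*$ finite and continuous, so $\epi(f^*)$ is closed.

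The heart of the matter is then to promote this fibrewise picture to an isomorphism of sheaves, and the key point --- where superlinearity is essential --- is that $q|_W$ is \emph{proper}. Indeed, over a compact $C\subset\epi(f^*)$ the first coordinate $k$ stays in a compact set $\{|k|\leq M\}$, and superlinearity gives $f(x)-\langle k,x\rangle\geq f(x)-M|x|\to+\infty$, so the admissible $x$ lie in a fixed ball; the $t_1$-coordinate is then confined to a bounded interval and $t_2=w-t_1$ as well, whence $q|_W^{-1}(C)$ is compact. (This is exactly the step that fails for merely convex $f$, where the fibres become unbounded and $R\Gamma_c(Z_{k,w})$ is no longer $\ko$.) Granting properness, factor $q|_W$ as $W\stackrel{\bar q}{\twoheadrightarrow}\epi(f^*)\stackrel{i}{\hookrightarrow}X\times\R$ with $i$ a closed embedding; then
\[
Rq_!\ko_W=i_*R\bar q_*\ko_W,
\]
and since the fibres $Z_{k,w}$ are nonempty compact convex, hence acyclic, proper base change shows that the adjunction unit $\ko_{\epi(f^*)}\to R\bar q_*\ko_W$ is an isomorphism on every stalk (each stalk being $R\Gamma(Z_{k,w};\ko)=\ko$ in degree $0$), therefore an isomorphism of sheaves. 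Combining, $K\circ F_f\simeq\ko_{\epi(f^*)}$. The only delicate verification in this plan is the uniform compactness underlying properness; once that is secured the convexity of the fibres and standard proper base change (as in \cite{KS}) do the rest.
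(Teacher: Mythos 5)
Your proof is correct and follows essentially the same route as the paper's: both reduce the kernel composition to the (compactly supported) cohomology of the fibre over $(k,w)$, which boils down to the sublevel set $\{x:\ f(x)-\langle k,x\rangle\leq w\}$ being compact and convex by superlinearity, hence acyclic, so the stalk is $\ko$ or $0$ according to whether $(k,w)\in\epi(f^*)$. Your write-up is more careful than the paper's one-line germ computation --- in particular the properness check and the adjunction unit giving an actual morphism of sheaves to compare with $\ko_{\epi(f^*)}$ are details the paper leaves implicit --- but the underlying idea is identical.
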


\begin{proof}
 Let us compute the germ at $(y_0,t_0)$ of the right hand side given by :

 \[ H^*_c(f(x)-\langle y_0,x\rangle \leq t_0) \ .\]
 
The set $\{x, f(x)-\langle y_0,x\rangle \leq t_0\}$ is compact and convex. The germ is then either $\ko$ if $(y_0,t_0)\in \epi(f^*)$ either $0$. 
\end{proof}

\begin{definition}
We define the extended Legendre transform of a function $f$ to be the sheaf defined by:

\[\hat{f}=K\circ F_f \ .\]
We define the opposite extended Legendre transform of a sheaf $F$ by:

\[\overline{F}=L\circ F\]
\noindent with $L=\ko_{\{\langle x,y\rangle\leq t\}}$.
\end{definition}

\begin{exemple}
Let $f\fc \R^n \to \R$, $x\mapsto \langle v,x \rangle$ for the standart scalar product. Then, 

\[\hat{f} \simeq \ko_{v}[n]\ .\]

\end{exemple}

We have the following inversion

\begin{prop} \label{invert}
 Let $f\fc \R^n\to \R$ be a Lipschitz function. Then,
 
 \[ \overline{\hat{f}}=\ko_{\epi(f)}[n] \ .\]
 
\end{prop}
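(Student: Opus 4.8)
The plan is to exploit the functoriality of the kernel calculus of \cite{KS,Tamarkin}. Unwinding the definitions, $\overline{\hat f}=\Phi_L(\Phi_K(F_f))$, and the composition of kernels satisfies $\Phi_L\circ\Phi_K\simeq\Phi_{L\circ K}$ (a formal consequence of base change and the projection formula for the functors $Rq_!$). Thus I would first rewrite
\[\overline{\hat f}\simeq \Phi_{L\circ K}(F_f)\ .\]
The decisive gain is that the kernel $L\circ K$ no longer depends on $f$: it is a fixed object of $D^b(\ko_{\R^n\times\R^n\times\R})$, and I claim it is the identity kernel up to a dimensional shift, namely $L\circ K\simeq \ko_{\Delta\times[0,\infty[}[n]$, where $\Delta\subset\R^n\times\R^n$ is the diagonal.

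To compute $L\circ K$ I would insert $K=\ko_{-\{\langle x,y\rangle\le t\}}$ and $L=\ko_{\{\langle y,z\rangle\le t\}}$ into the definition of $\circ$. The tensor product $q_{124}^{-1}K\otimes q_{235}^{-1}L$ is the constant sheaf on the region of $\R^n_x\times\R^n_y\times\R^n_z\times\R_{t_1}\times\R_{t_2}$ carved out by the two affine inequalities, and $R{q_{13s}}_!$ computes, stalkwise over $(x,z,t)$, the compactly supported cohomology $R\Gamma_c$ of the fibre, an intersection of two half-spaces in the variables $(y,t_2)$ (with $t_1=t-t_2$). When $x\ne z$ the two defining linear forms are independent, so the fibre is homeomorphic to $[0,\infty[^{2}\times\R^{n-1}$, whose compactly supported cohomology vanishes, and the stalk is $0$ off the diagonal. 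When $x=z$ the two forms coincide and the fibre degenerates to $\R^{n}\times[0,t]$, empty for $t<0$ and with $R\Gamma_c\simeq\ko[-n]$ for $t\ge 0$, the shift arising from $H^{n}_c(\R^{n})=\ko$. This is precisely $\ko_{\Delta\times[0,\infty[}$ placed in the appropriate degree, so $L\circ K\simeq\ko_{\Delta\times[0,\infty[}[n]$, the exact sign of the shift being fixed by the grading convention for $R\Gamma_c(\R^n)$.

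It then remains to compose with $F_f$. The kernel $\ko_{\Delta\times[0,\infty[}$ is the unit for the action on any sheaf $G$ with $G*\ko_{[0,\infty[}\simeq G$, since restriction to the diagonal followed by convolution over the $\R$-factors reproduces $G*\ko_{[0,\infty[}$. As $\epi(f)$ is upward closed in $t$ we have $F_f*\ko_{[0,\infty[}\simeq F_f$, whence
\[\Phi_{L\circ K}(F_f)\simeq \bigl(\Phi_{\ko_{\Delta\times[0,\infty[}}(F_f)\bigr)[n]\simeq F_f[n]=\ko_{\epi(f)}[n]\ ,\]
which is the claim. Here the Lipschitz hypothesis plays its part: a $K$-Lipschitz $f$ has $\partial f$ bounded by $K$ (the boundness property), so by the earlier computation of $K\circ F_f$ the transform $\hat f$ is supported in $\{\|y\|\le K\}$ in the dual variable; this compactness makes the projections entering $\Phi_L(\hat f)$ proper on the supports, so that $Rq_!$ computes the expected object and the functoriality $\Phi_L\circ\Phi_K\simeq\Phi_{L\circ K}$ applies without a properness obstruction.

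The main obstacle I anticipate is the honest evaluation of $L\circ K$: one must identify the fibres of $R{q_{13s}}_!$ exactly, handle the non-proper pushforward through compactly supported fibre cohomology, and keep careful track of the dimensional shift $[n]$ together with the sign conventions hidden in the symbol $-\{\langle x,y\rangle\le t\}$. Once this fixed kernel is identified and the properness granted by the Lipschitz bound is in place, everything downstream—the associativity step and the final collapse to $F_f[n]$—is formal.
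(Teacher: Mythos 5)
Your argument is correct and takes essentially the same route as the paper, whose proof is a one-line citation of \cite{Tamarkin} (Thm 3.5) explicitly described as resting on associativity of kernel composition plus the explicit computation of the composed kernel; you simply carry out the stalkwise computation of $L\circ K$ as the unit kernel $\ko_{\Delta\times[0,\infty[}$ up to shift that the paper delegates to the reference. The only caveat is the sign of the shift $[n]$ versus $[-n]$ coming from $H^n_c(\R^n)\simeq\ko$, which you rightly flag as a grading convention and on which the paper itself is not more precise.
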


\begin{proof}
 We refer here to the article of \cite{Tamarkin} thm 3.5. It is mainly based on associativity of kernel composition and to the explicit computation of $K\circ L$.
\end{proof}

We thus deduce :

\begin{prop} \label{rotation} 
Let $f$ be a Lipschitz function. Then the representative of $\hat{f}$ is the closure of the canonical rotation by $-\frac{\pi}{2}$ of $\partial f$ in $T^*\R^n$.
\end{prop}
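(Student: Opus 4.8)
The plan is to realise $\hat f=K\circ F_f$ as the action $\Phi_K(F_f)$ of the Legendre kernel and to apply the microsupport bound of Proposition \ref{bok}. First I would check that its hypotheses are met: since $\epi(f)$ is stable under upward translation $t\mapsto t+s$ ($s\geq 0$), one has $F_f*\ko_{[0,\infty[}\simeq F_f$, and the same holds for $K=\ko_{\{-\langle x,y\rangle\leq t\}}$ because its defining set is likewise stable in $t$. Proposition \ref{bok} then gives
\[R(\hat f)\subset R(F_f)\circ R(K),\]
where $\circ$ denotes composition of Lagrangian correspondences, so the whole problem reduces to computing $R(K)$ and carrying out this composition.

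Next I would compute $R(K)$ explicitly. The set $Z=\{(x,y,t);\,-\langle x,y\rangle\leq t\}$ is a half-space bounded by the smooth hypersurface $\{\langle x,y\rangle+t=0\}$, whose defining function $\phi(x,y,t)=\langle x,y\rangle+t$ has nowhere vanishing differential $d\phi=y\,dx+x\,dy+dt$. By Example \ref{fundex} c), away from the zero section
\[SS(K)=\{(x,y,t;\lambda y,\lambda x,\lambda);\ \langle x,y\rangle+t=0,\ \lambda>0\}.\]
Applying the reduction $\rho$ on the stratum $\tau=\lambda>0$ collapses the fibre variable and yields
\[R(K)=\{((x,y);(y,x));\ x,y\in\R^n\}\subset T^*(\R^n\times\R^n).\]
Read as a Lagrangian correspondence (with the antipodal on the outgoing factor dictated by the composition convention), this is exactly the graph of the canonical rotation $\sigma\fc(x,\xi)\mapsto(\xi,-x)$ by $-\tfrac{\pi}{2}$; the opposite kernel $L=\ko_{\{\langle x,y\rangle\leq t\}}$ produces in the same way the inverse rotation by $+\tfrac{\pi}{2}$.

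Composing, I would then observe that the antipodal built into the definition $\partial f=R(F_f)^a$ cancels precisely the antipodal appearing in the correspondence composition, so that $R(F_f)\circ R(K)$ equals the closure of $\sigma(\partial f)$ rather than of $\sigma(R(F_f))$. This is confirmed on the test function $f(x)=\langle v,x\rangle$, where $\partial f=\{(x,v)\}$, $\sigma(\partial f)=\{v\}\times\R^n=\nu^*_{\{v\}}\R^n$, in agreement with $\hat f\simeq\ko_v[n]$. This gives the forward inclusion $R(\hat f)\subset\overline{\sigma(\partial f)}$. For the reverse inclusion I would apply Proposition \ref{bok} to $L$ acting on $\hat f$ and invoke the inversion Proposition \ref{invert}, $\overline{\hat f}=\ko_{\epi(f)}[n]=F_f[n]$: since $L$ realises $\sigma^{-1}$, no part of the representative can be lost, and combining the two inclusions yields equality up to closure.

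The main obstacle is the sign and antipodal bookkeeping in identifying $R(K)$ with the rotation correspondence and in executing the composition $R(F_f)\circ R(K)$; this is exactly where one must ensure that the outcome is the rotation of $\partial f$ and not of $R(F_f)$. The secondary technical point is the appearance of the closure: it arises because the reduction $\rho$ discards the $\{\tau=0\}$ stratum of $SS(\hat f)$, so that $\sigma(\partial f)$ and $R(\hat f)$ can differ only by limit points coming from $\tau\to 0^+$, which is precisely what the closure in the statement absorbs.
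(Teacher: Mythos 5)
Your proposal follows essentially the same route as the paper's proof: the forward inclusion $R(\hat f)\subset J(\partial f)$ via the kernel composition bound (Proposition \ref{bok}, i.e.\ Tamarkin's theorem) applied to $K$, and the reverse inclusion via the same bound applied to $L$ together with the inversion $\overline{\hat f}\simeq F_f[n]$ of Proposition \ref{invert}, then combining the two using closedness of $\partial f$ for $f$ Lipschitz. Your explicit computation of $R(K)$ from Example \ref{fundex} c) and the verification of the $*\,\ko_{[0,\infty[}$-stability hypotheses are details the paper leaves implicit, but the logical skeleton is identical.
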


\begin{proof}
 Let's denote by $J$ the previous canonical rotation in $T^*\R^n $. Because $f$ is Lipschitz, $\partial f$ is closed and we can apply thm 3.6 in \cite{Tamarkin}. Then,
 
 \[R(\hat(f)\subset J(\partial f) \ .\]
 
 In the other and applying the same theorem to the kernel $L$ gives and according to \ref{invert}:
 
 \[\partial(f)\subset J^{-1} \overline{R(f)} \ . \]
 
 Coupling the two inclusions:
 
 \[J\partial f\subset J.J^{-1} \overline{R(\hat{f})}=\overline{\hat{f}}\subset J \partial f\ .\]

 \noindent We then deduce :
 
 \[\overline{R(\hat {f})}=J \partial f \ .\]

\end{proof}

\begin{exemple}
 Here, we deal with the example of a non-convex function $f(x):=-x^4+2x$.
 The graph of $f$ is the following.
 
 \begin{center}
 \includegraphics[width=0.30\textwidth]{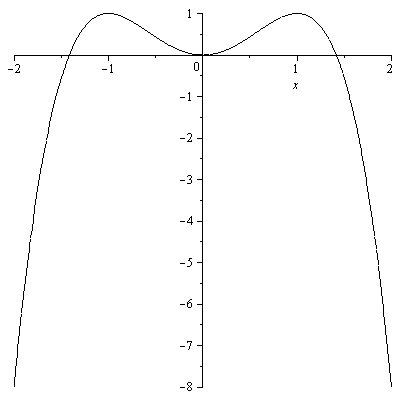}
 \end{center}
 The graph of the differential and its rotation by angle $-\frac{\pi}{2}$.
 
 \begin{center}
 \includegraphics[width=0.30\textwidth]{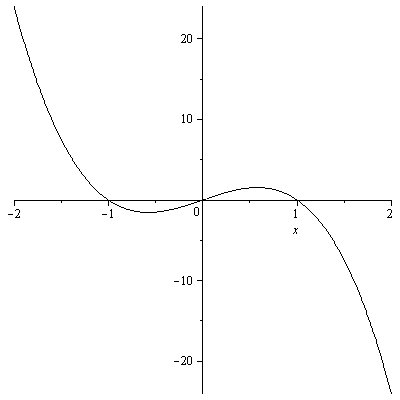}
 \end{center}
 
 \begin{center}
 \includegraphics[width=0.30\textwidth]{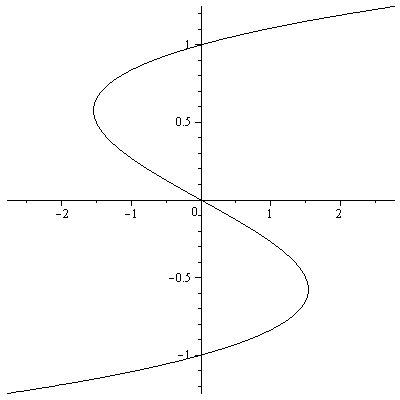}
 \end{center}
  
  Thus, the extended Legendre transform of $f$ is the sheaf represented by the next picture. We quote the germ of the sheaf, it can be considered as the right extension between two constant sheaves on locally closed set.
  
  \begin{center}
  \includegraphics[width=0.30\textwidth]{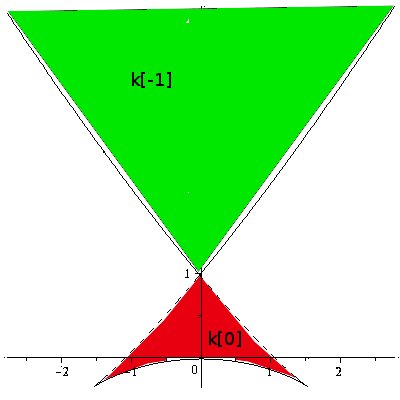}
  \end{center}
  
  It is a general fact that the extended Legendre transform is related to the front of the rotation of $graph(df)$ and can be considered as a decorated front. Moreover, the traditional Legendre transform (of the convexified or concavified) can be read of the front as a graph selector.

\end{exemple}

\end{document}